\newtheorem{theorem}{Theorem}[section]
\newtheorem{lemma}[theorem]{Lemma}
\newtheorem{proposition}[theorem]{Proposition}
\newtheorem{corollary}[theorem]{Corollary}
\theoremstyle{definition}
\theoremstyle{remark}
\newtheorem{remark}[theorem]{Remark}
\numberwithin{equation}{section}
\newcommand{\1}{\mathbb 1}
\newcommand{\eps}{\varepsilon}
\newcommand{\R}{\mathbb R}
\newcommand{\cF}{\mathcal F}
\newcommand{\be}{\begin{equation}}
\newcommand{\ee}{\end{equation}}
\DeclareFontFamily{U}{mathx}{\hyphenchar\font45}
\DeclareFontShape{U}{mathx}{m}{n}{
      <5> <6> <7> <8> <9> <10>
      <10.95> <12> <14.4> <17.28> <20.74> <24.88>
      mathx10
      }{}
\DeclareSymbolFont{mathx}{U}{mathx}{m}{n}
\DeclareMathAccent{\widecheck}{0}{mathx}{"71}
\DeclareMathAccent{\wideparen}{0}{mathx}{"75}
\newcommand{\tria}{{\mathcal T}}
\newcommand{\bbT}{\mathbb{T}}
\newcommand{\cP}{\mathcal{P}}
\newcommand{\identity}{\mathrm{Id}}
\newcommand{\HH}{\mathscr{H}}
\newcommand{\V}{\mathscr{V}}
\newcommand{\B}{\mathscr{B}}
\newcommand{\Ss}{\mathscr{S}}
\newcommand{\W}{\mathscr{W}}
\newcommand{\Y}{\mathscr{Y}}
\newcommand{\ZZ}{\mathscr{Z}}
\newcommand{\nrm}{| \hspace{-.6mm} | \hspace{-.6mm} |}
\DeclareMathOperator{\ran}{ran}
\DeclareMathOperator{\meas}{meas}
\DeclareMathOperator{\supp}{supp}
\DeclareMathOperator{\Span}{span}
\DeclareMathOperator{\diam}{diam}
\DeclareMathOperator{\diag}{diag}
\newcommand{\cL}{\mathcal L}
\newcommand{\Lis}{\cL\mathrm{is}}
\newcommand{\cLis}{\cL\mathrm{is}_c}
\newcommand{\lnrm}{\mathopen{| \! | \! |}}
\newcommand{\rnrm}{\mathclose{| \! | \! |}}
\newcommand{\new}[1]{#1}
\newcommand{\note}[1]{}
\newcommand{\uumlaut}{{\"u}}
\par\begin{samepage}%
\title{\new{Uniform preconditioners} for problems of negative order}
\date{\today}
\author{Rob Stevenson, Raymond van Veneti\"{e}}
\address{
Korteweg-de Vries Institute for Mathematics,
University of Amsterdam,
P.O. Box 94248,
1090 GE Amsterdam, The Netherlands}
\email{r.p.stevenson@uva.nl, r.vanvenetie@uva.nl}
\thanks{This work was initiated during the trimester programme ``Multiscale Problems: Algorithms,
Numerical Analysis and Computation'' January-April 2017 at the Hausdorff
Research Institute for Mathematics, Bonn, Germany, whose support is gratefully acknowledged by the first author.
In addition, he has been supported by NSF Grant DMS 1720297.
The second author has been supported by the Netherlands Organization for Scientific Research
(NWO) under contract. no. 613.001.652}
\subjclass[2010]{
65F08, %Preconditioners for iterative methods
65N38, %Boundary element methods
65N30, %Finite elements, Rayleigh-Ritz and Galerkin methods, finite methods
45Exx. %Singular integral equations
}
\keywords{\new{Uniform} preconditioners, operators of negative order, finite- and boundary elements}
\begin{document}
\begin{abstract} 
\new{Uniform} preconditioners for operators of negative order discretized by (dis)continuous piecewise polynomials of any order are constructed from a boundedly invertible operator of opposite order discretized by continuous piecewise linears. Besides the cost of the application of the latter discretized operator, the other cost of the preconditioner scales linearly with the number of mesh cells.
Compared to earlier proposals, the preconditioner has the following advantages: It does not require the inverse of a non-diagonal matrix; it applies without any mildly grading assumption on the mesh; and it does not require a barycentric refinement of the mesh underlying the trial space.
\end{abstract}

\maketitle

\section{Introduction} 
\subsection{Operator preconditioning}
This paper is about the construction of preconditioners for discretized boundedly invertible linear operators of negative order using the concept of `operator preconditioning' (\cite{138.26}). The idea is to precondition the discretized operator by a discretized operator of opposite order.
This is an appealing idea, but it turns out that in order to get a uniformly well-conditioned system, as well as a preconditioner that can be implemented efficiently, the second discretization has to be carefully chosen dependent on the first one.

For a Hilbert space $\HH$, and a densely embedded reflexive Banach space ${\W \hookrightarrow \HH}$, consider the Gelfand triple
$$
\W \hookrightarrow \HH \simeq \HH' \hookrightarrow \W'.
$$
For $A$ being a boundedly invertible coercive linear operator $\W' \rightarrow \W$, and $\V_\tria \subset \HH$ being a finite dimensional subspace of $\W'$, let $(A_\tria v)(\tilde{v}):=(Av)(\tilde{v})$ ($v, \tilde{v} \in \V_\tria$).
For $B$ being a boundedly invertible coercive linear operator $\W \rightarrow \W'$, and $\W_\tria$ being a finite dimensional subspace of $\W$,
let $(B_\tria w)(\tilde{w}):=(B w)(\tilde{w})$ ($w, \tilde{w} \in \W_\tria$).

A typical example is given by the case that for the boundary $\Gamma$ of some domain, $\HH=L_2(\Gamma)$, $\W=H^{\frac{1}{2}}(\Gamma)$, $A$ is the single layer integral operator \new{arising from the Laplacian}, $B$ \new{is the corresponding} hypersingular integral operator, $\tria$ is a partition from an infinite collection of partitions $\bbT$, $\V_\tria$ is a trial space of discontinuous piecewise polynomials w.r.t.~$\tria$, and $\W_\tria$ is a suitable subspace of $\W$, which thus cannot be equal to $\V_\tria$.
Besides as boundary integral equations, coercive linear operators of order $-1$ also appear in various domain decomposition type methods in the equations for normal fluxes on interfaces.

\new{Although less frequently, coercive linear operators of order $-2$ also appear in the literature (e.g. see \cite{75.067}).}

In order to precondition $A_\tria:\V_\tria \rightarrow \V_\tria'$ with $B_\tria:\W_\tria \rightarrow \W_\tria'$ we need to be able to `identify' $\V_\tria'$ with $\W_\tria$, similar to the identification of $\W''$ with $\W$. Let $\dim \W_\tria=\dim \V_\tria$ and
\be \label{infinfsup}
\inf_{\tria \in \bbT} \inf_{0 \neq v \in \V_\tria} \sup_{0 \neq w \in \W_\tria} \frac{\langle v,w\rangle_\HH}{\|v\|_{\W'} \|w\|_{\W}}>0.
\ee
Then $D_\tria$ defined by $(D_\tria v)(w):=\langle v,w\rangle_\HH$ ($v \in \V_\tria,\, w \in \W_\tria$) is a uniformly boundedly invertible linear map $\V_\tria \rightarrow \W_\tria'$, and so its adjoint $D_\tria'$ is such a map $\V'_\tria \rightarrow \W_\tria$. We conclude that the preconditioned system $D_\tria^{-1} B_\tria (D'_\tria)^{-1} A_\tria$ is uniformly boundedly invertible $\V_\tria \rightarrow \V_\tria$.

Equipping $\V_\tria$ and $\W_\tria$ with bases $\Xi_\tria$ and $\Psi_\tria$, respectively, the matrix representation of the preconditioned system reads as
$\bm{D}_\tria^{-1} \bm{B}_\tria \bm{D}_\tria^{-T} \bm{A}_\tria$, with `stiffness matrices' $\bm{A}_\tria:=(A_\tria \Xi_\tria)(\Xi_\tria)$ and $\bm{B}_\tria:=(B_\tria \Psi_\tria)(\Psi_\tria)$, and `generalized mass matrix' $\bm{D}_\tria:=\langle \Xi_\tria,\Psi_\tria\rangle_\HH$. Regardless of the choice of the bases, the spectral condition number of this matrix is equal to that of $D_\tria^{-1} B_\tria (D'_\tria)^{-1} A_\tria$, and thus uniformly bounded.

After an earlier proposal from \cite{249.16}, the currently commonly followed construction of a suitable pair $(\V_\tria,\W_\tria)$ is the one from \cite{35.8655}.
Here $\V_\tria$ is the space of piecewise constants w.r.t.~a partition $\tria$ of a two-dimensional domain or manifold equipped with the usual basis $\Xi_\tria$, and $\W_\tria$, defined as the span of a collection~$\Psi_\tria$, is a subspace of the space of continuous piecewise linears w.r.t.~a barycentric refinement of $\tria$ constructed by subdividing each triangle into 6 subtriangles by connecting its vertices and midpoints with its barycenter.
In \cite{138.275} the inf-inf-sup condition \eqref{infinfsup} was demonstrated for families of partitions including locally refined ones that satisfy a certain mildly-grading condition from \cite{249.18}.

A problem with the constructions from both \cite{249.16,35.8655} is that the matrix $\bm{D}_\tria$ is \emph{not} diagonal, so that its inverse has to be approximated.
Knowing that $\bm{D}_\tria^{-1} \bm{B}_\tria \bm{D}_\tria^{-T}$ is \emph{not} well-conditioned, because $\bm{A}_\tria$ is not whereas their product is uniformly well-conditioned, the accuracy with which $\bm{D}_\tria^{-1}$ has to be approximated such that it gives rise to a uniform preconditioner increases with an increasing (minimal) mesh-size. 

\subsection{Contributions from this paper}
For the aforementioned $\V_\tria$ and $\Xi_\tria$, in this work a space $\W_\tria$, given as the span of a collection $\Psi_\tria$, will be constructed such that \eqref{infinfsup} is valid, and $\bm{D}_\tria=\langle \Xi_\tria,\Psi_\tria\rangle_\HH$ is \emph{diagonal}. \new{i.e., $\Xi_\tria$ and $\Psi_\tria$ are \emph{biorthogonal}.}
Thanks to \new{both $\Xi_\tria$ and $\Psi_\tria$ being `locally supported',} the corresponding biorthogonal projector \new{onto $\W_\tria$} is \emph{local}, which allows to demonstrate the inf-inf-sup stability \emph{without any mildly grading assumption} on the partitions.

Each function in $\Psi_\tria$ equals a function from the space \new{$\Ss_{\tria,0}^{0,1}$} of continuous piecewise linears\footnote{\new{The subscript $0$  in the notation $\Ss_{\tria,0}^{0,1}$ refers to possible boundary conditions that are incorporated.}}  w.r.t.~$\tria$, plus a linear combination of `bubble functions' from a space denoted as $\B_\tria$. Since the decomposition of $\Ss_{\tria,0}^{0,1} \oplus \B_\tria$ into $\Ss_{\tria,0}^{0,1}$ and $ \B_\tria$ is stable w.r.t.~the $\W$-norm, instead of simply defining $(B_\tria w)(\tilde w):=(B w)(\tilde w)$ \new{($w,\tilde w\in \W_\tria$)}, a suitable boundedly invertible linear operator $B_\tria\colon \W_\tria \rightarrow \W_\tria'$ will be constructed from an \new{invertible} diagonal scaling on the bubble space and a  boundedly invertible linear operator $B_\tria^{\Ss}\colon \Ss_{\tria,0}^{0,1} \rightarrow (\Ss_{\tria,0}^{0,1})'$, e.g.~$(B_\tria^{\Ss} w)(\tilde w):=(B w)(\tilde w)$ \new{with $B$ the hypersingular operator.
Other than in \cite{249.16,35.8655}, by this use of the stable decomposition there is no need to discretize the hypersingular operator on a refinement of $\tria$.
The whole approach relies on \emph{existence} of bubble functions with certain properties, whereas these functions themselves do not enter the implementation.

The total cost of the resulting preconditioner is the sum of the cost of the application of $B_\tria^{\Ss}$ plus cost that scales linearly in $\#\tria$. For $\tria$ being a uniform refinement of some initial coarse partition, a $B_\tria^{\Ss}$ of multi-level type can be found whose cost scales linearly in $\#\tria$ (\cite{34.8}).
Such $B_\tria^{\Ss}$ that also apply on locally refined partitions will be discussed in \cite{249.98}.}

The construction of the biorthogonal collection $\Psi_\tria$, and with that of the preconditioner, is based on a general principle.
It applies in any space dimension, and, as we will see, it applies equally well when $\V_\tria$ is the space of \emph{continuous piecewise linears}.
Higher order discretizations will be covered as well.

The construction applies equally well on manifolds. The coefficients of the functions from $\Psi_\tria$ in terms of functions from $\Ss_{\tria,0}^{0,1}$ and the bubble functions are given as inner products between functions of $\V_\tria$ and $\Ss_{\tria,0}^{0,1}$.
Since in the manifold case, however, generally these inner products cannot be evaluated exactly, we present an alternative slightly modified construction in which the true $L_2$-inner product is replaced by a mesh-dependent one by an element-wise freezing of the Jacobian.
It still yields a uniform preconditioner on general, possibly locally refined partitions, \new{while the same explicit formula} for the expansion coefficients of the functions of $\Psi_\tria$ that was derived in the domain case, \new{now also} applies in the manifold case.

\subsection{Notations}
In this work, by $\lambda \lesssim \mu$ we will mean that $\lambda$ can be bounded by a multiple of $\mu$, independently of parameters which $\lambda$ and $\mu$ may depend on, with the sole exception of the space dimension $d$, or in the manifold case, on the parametrization of the manifold that is used to define the finite element spaces on it. Obviously, $\lambda \gtrsim \mu$ is defined as $\mu \lesssim \lambda$, and $\lambda\eqsim \mu$ as $\lambda\lesssim \mu$ and $\lambda \gtrsim \mu$.

%We set $\N:=\{1,2,\ldots\}$, and $\N_0:=\N \cup\{0\}$.

For normed linear spaces $\Y$ and $\ZZ$, in this paper for convenience over $\R$, $\cL(\Y,\ZZ)$ will denote the space of bounded linear mappings $\Y \rightarrow \ZZ$ endowed with the operator norm $\|\cdot\|_{\cL(\Y,\ZZ)}$. The subset of invertible operators in $\cL(\Y,\ZZ)$  with inverses in $\cL(\ZZ,\Y)$
will be denoted as $\Lis(\Y,\ZZ)$.
The \emph{condition number} of a $C \in \Lis(\Y,\ZZ)$ is defined as $\kappa_{\Y,\ZZ}(C):=\|C\|_{\cL(\Y,\ZZ)}\|C^{-1}\|_{\cL(\ZZ,\Y)}$.

For $\Y$ a reflexive Banach space and $C \in \cL(\Y,\Y')$ being \emph{coercive}, i.e.,
$$
\inf_{0 \neq y \in \Y} \frac{(Cy)(y)}{\|y\|^2_\Y} >0,
$$
both $C$ and $\Re(C)\!:= \!\frac{1}{2}(C+C')$ are in $\Lis(\Y,\Y')$ with
\begin{align*}
\|\Re(C)\|_{\cL(\Y,\Y')} &\leq \|C\|_{\cL(\Y,\Y')},\\
\|C^{-1}\|_{\cL(\Y',\Y)} & \leq \|\Re(C)^{-1}\|_{\cL(\Y',\Y)}=\Big(\inf_{0 \neq y \in \Y} \frac{(Cy)(y)}{\|y\|^2_\Y}\Big)^{-1}.
\end{align*}
%and so $\max(\kappa_{\Y,\Y'}(C),\kappa_{\Y,\Y'}(\Re(C)))\leq \|C\|_{\cL(\Y,\Y')} \|\Re(C)^{-1}\|_{\cL(\Y',\Y)}$.
The set of coercive $C \in \Lis(\Y,\Y')$ is denoted as $\cLis(\Y,\Y')$.
If $C   \in \cLis(\Y,\Y')$, then $C^{-1} \in \cLis(\Y',\Y)$ and $\|\Re(C^{-1})^{-1}\|_{\cL(\Y,\Y')} \leq \|C\|_{\cL(\Y,\Y')}^2 \|\Re(C)^{-1}\|_{\cL(\Y',\Y)}$.

Two countable collections $\Upsilon=(\upsilon_i)_i$ and $\tilde{\Upsilon}=(\tilde{\upsilon}_i)_i$ in a Hilbert space will be called \emph{biorthogonal} when $\langle \Upsilon, \tilde{\Upsilon}\rangle=[\langle \upsilon_j, \tilde{\upsilon}_i\rangle]_{i j}$ is an \emph{invertible diagonal} matrix, and \emph{biorthonormal} when it is the \emph{identity} matrix.

\subsection{Organization}
In Sect.~\ref{Soper} the general principles of operator preconditioning are recalled. 
In Sect.~\ref{Sdomain_case}, it is applied to operators of negative order discretized with \emph{discontinuous} piecewise \new{constants}, first in the domain- and then in the manifold-case.\note{phrase removed}
% first for piecewise constants, and then for higher order polynomials using an additive subspace correction method.
In Sect.~\ref{Slinears}, the same program is followed for trial spaces of \emph{continuous} piecewise \new{linears}.
\new{In Sect.~\ref{Shigherorder} the results from Sect.~\ref{Sdomain_case}-\ref{Slinears} will be extended to higher order finite element spaces.
This will be done by both applying the operator preconditioning framework directly to the higher order spaces, and by using the preconditioner found for the lowest order case in a subspace correction approach.}
Finally, in Sect.~\ref{Snumerics} we report on some numerical results obtained with the new preconditioners, and compare them with those obtained with the preconditioner from \cite{35.8655,138.275}.

\section{Operator preconditioning} \label{Soper}
The exposition in this section largely follows \cite[Sect.~2]{138.26} closely.
Let $\V$, $\W$ be reflexive Banach spaces. We will search a `preconditioner' $G$ for an $A \in \Lis(\V,\V')$, i.e.~an operator $G \in \Lis(\V',\V)$ (whose application is `easy' compared to that of $A^{-1}$).
\new{It is often useful, e.g. for the application of Conjugate Gradients, when the preconditioner is coercive, i.e., being an operator in $\cLis(\V',\V)$.}
The following result is easily verified.

\begin{proposition} \label{prop10} If $B \in \Lis(\W,\W')$ and $D  \in \Lis(\V,\W')$, then
$$
G:=D^{-1} B (D')^{-1} \in \Lis(\V',\V),
$$
and
\begin{align*}
\|G\|_{\cL(\V',\V)} &\leq \|D^{-1}\|_{\cL(\W',\V)}^2 \|B\|_{\cL(\W,\W')},\\
\|G^{-1}\|_{\cL(\V,\V')} & \leq \|D\|_{\cL(\V,\W')}^2 \|B^{-1}\|_{\cL(\W',\W)}.
\end{align*}
If \new{additionally} $B \in \cLis(\W,\W')$, then $G \in \cLis(\V',\V)$, and 
$$
\|\Re(G)^{-1}\|_{\cL(\V,\V')} \leq \|D\|_{\cL(\V,\W')}^2 \|\Re(B)^{-1}\|_{\cL(\W',\W)}.
$$
\end{proposition}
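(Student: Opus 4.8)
The plan is to prove everything by direct manipulation, relying only on the facts that adjoints of Banach-space isomorphisms are isomorphisms with $\|T'\|_{\cL}=\|T\|_{\cL}$ and $(T')^{-1}=(T^{-1})'$, and that reflexivity of $\W$ gives an isometric identification $\W''\simeq\W$, so that $D'$ may be regarded as a map $\W\to\V'$.

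First I would settle the membership $G\in\Lis(\V',\V)$ together with the two norm bounds. From $D\in\Lis(\V,\W')$ one gets $D'\in\Lis(\W,\V')$ with $(D')^{-1}=(D^{-1})'\in\cL(\V',\W)$, and moreover $\|(D')^{-1}\|_{\cL(\V',\W)}=\|D^{-1}\|_{\cL(\W',\V)}$ and $\|D'\|_{\cL(\W,\V')}=\|D\|_{\cL(\V,\W')}$. Composing the three isomorphisms $(D')^{-1}\colon\V'\to\W$, $B\colon\W\to\W'$, $D^{-1}\colon\W'\to\V$ shows $G=D^{-1}B(D')^{-1}\in\Lis(\V',\V)$ with inverse $G^{-1}=D'B^{-1}D$. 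The two displayed estimates then follow from submultiplicativity of the operator norm and the two norm identities just noted.

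For the coercivity part, assume $B\in\cLis(\W,\W')$ and put $w:=(D')^{-1}\phi$, which is a bijection of $\V'\setminus\{0\}$ onto $\W\setminus\{0\}$. Writing $\phi=D'w$ and using the defining relation $(D'w)(v)=(Dv)(w)$ of the adjoint (applied with $v=D^{-1}Bw$), one computes
\[
(G\phi)(\phi)=(D'w)\big(D^{-1}Bw\big)=\big(D(D^{-1}Bw)\big)(w)=(Bw)(w).
\]
Since $\|\phi\|_{\V'}=\|D'w\|_{\V'}\le\|D\|_{\cL(\V,\W')}\|w\|_{\W}$, this yields
\[
\inf_{0\neq\phi\in\V'}\frac{(G\phi)(\phi)}{\|\phi\|_{\V'}^2}\;\ge\;\|D\|_{\cL(\V,\W')}^{-2}\,\inf_{0\neq w\in\W}\frac{(Bw)(w)}{\|w\|_{\W}^2}\;>\;0,
\]
so $G\in\cLis(\V',\V)$. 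The final displayed bound is then exactly this inequality rewritten by means of the identity $\|\Re(C)^{-1}\|_{\cL(\Y',\Y)}=\big(\inf_{0\neq y}(Cy)(y)/\|y\|_{\Y}^2\big)^{-1}$ recalled in the Notations, applied once with $C=G$ and once with $C=B$.

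The only point requiring genuine care — the \emph{main obstacle}, such as it is — is the bookkeeping of the reflexivity identifications: ensuring that $G\phi$, a priori an element of $\V$, is inserted into the correct argument slot of $\phi\in\V'$, and that the substitution $w=(D')^{-1}\phi$ indeed ranges over all of $\W\setminus\{0\}$. There is no analytic difficulty, and all constants come out precisely as stated.
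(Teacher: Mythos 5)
Your proposal is correct: the paper itself gives no proof (it only remarks that the result is ``easily verified''), and your argument --- composing the three isomorphisms $(D')^{-1}\colon\V'\to\W$, $B\colon\W\to\W'$, $D^{-1}\colon\W'\to\V$ for the membership and norm bounds, then substituting $\phi=D'w$ to get $(G\phi)(\phi)=(Bw)(w)$ and invoking the identity for $\|\Re(\cdot)^{-1}\|$ recalled in the paper's Notations --- is exactly the intended routine verification. The reflexivity bookkeeping ($\W''\simeq\W$, $\|(D')^{-1}\|_{\cL(\V',\W)}=\|D^{-1}\|_{\cL(\W',\V)}$, and $(D')^{-1}$ being a bijection of $\V'\setminus\{0\}$ onto $\W\setminus\{0\}$) is handled correctly, so nothing is missing.
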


\begin{remark} \label{closedrange}
We recall that by an application of the \emph{closed range theorem}, $D  \in \cL(\V,\W')$ is in $\Lis(\V,\W')$ if and only if for all $w \in \W$ there exists a $v \in \V$ with $(Dv)(w)\neq 0$, and 
$$
0< \inf_{0 \neq v \in \V} \sup_{0 \neq w \in \W} \frac{(D v)(w)}{\|v\|_\V \|w\|_\W} \quad\Big(= \|D^{-1}\|_{\cL(\W',\V)}^{-1}\Big).
$$ 
\end{remark}

In particular we are interested in finding a preconditioner for an operator $A_\tria \in \Lis(\V_\tria,\V_\tria')$ of the form $G_\tria=D_\tria^{-1} B_\tria (D_\tria')^{-1}$,
where $\V_\tria$ is some \new{\emph{finite dimensional} (finite- or boundary element)} space.
In view of Proposition~\ref{prop10}, for that goal we search some finite dimensional space $\W_\tria$ with
\be \label{equaldims}
\dim \W_\tria=\dim \V_\tria,
\ee
and operators $B_\tria \in \Lis(\W_\tria,\W_\tria')$ and $D_\tria  \in \Lis(\V_\tria,\W_\tria')$.

A typical setting is that, for some reflexive Banach spaces $\V$ and $\W$, and operators
$A \in \cLis(\V,\V')$ and $B \in \cLis(\W,\W')$, we have $\V_\tria \subset \V$ (thus \emph{equipped with~$\|\,\|_\V$}), $(A_\tria u)(v):=(A u)(v)$ \new{($u,v \in \V_\tria$)} and, for a suitable $\W_\tria \subset \W$ (thus \emph{equipped with~$\|\,\|_\W$}), take $(B_\tria w)(z):=(B w)(z)$ \new{($w,z \in \W_\tria$)}. In this case
$A_\tria \in \cLis(\V_\tria,\V_\tria')$ and $B_\tria \in \cLis(\W_\tria,\W_\tria')$ with
\begin{align*}
\|A_\tria\|_{\cL(\V_\tria,\V_\tria')} \leq \|A\|_{\cL(\V,\V')}&,\,
\|\Re(A_\tria)^{-1}\|_{\cL(\V_\tria',\V_\tria)} \leq \|\Re(A)^{-1}\|_{\cL(\V',\V)},\\
\|B_\tria\|_{\cL(\W_\tria,\W_\tria')} \leq \|B\|_{\cL(\W,\W')}&,\,
\|\Re(B_\tria)^{-1}\|_{\cL(\W_\tria',\W_\tria)} \leq \|\Re(B)^{-1}\|_{\cL(\W',\W)}.
\end{align*}
An \new{obvious} construction of a suitable $D_\tria$ is discussed in the next proposition.

\begin{proposition}[Fortin projector (\cite{75.22})] \label{fortin} For some $D  \in \Lis(\V,\W')$, let \new{$D_\tria \in \cL(\V_\tria,\W_\tria')$ be defined by
$(D_\tria v)(w):=(D v)(w)$.} Then 
\begin{align} \nonumber
\|D_\tria\|_{\cL(\V_\tria,\W_\tria')} &\leq  \|D\|_{\cL(\V,\W')}.
\intertext{Assuming \eqref{equaldims}, \new{additionally one has} $D_\tria \in \Lis(\V_\tria,\W_\tria')$ \emph{if}, and \new{for} $\W$ \new{being} a Hilbert space, \emph{only if} there exists a projector $P_\tria \in \cL(\W,\W)$ onto $\W_\tria$ with \mbox{$(D \V_\tria )((\identity-P_\tria)\W)=0$,} \new{in which case}}
\label{bound}
\|D_\tria^{-1}\|_{\cL(\W_\tria',\V_\tria)} &\leq \|P_\tria\|_{\cL(\W,\W)} \|D^{-1}\|_{\cL(\W',\V)}.
\end{align}
\end{proposition}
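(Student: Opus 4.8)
The plan is to prove the three assertions in turn, reading invertibility of each square map through the closed range characterisation of Remark~\ref{closedrange}. The bound $\|D_\tria\|_{\cL(\V_\tria,\W_\tria')}\le\|D\|_{\cL(\V,\W')}$ is immediate: for $v\in\V_\tria$ the dual norm $\|D_\tria v\|_{\W_\tria'}=\sup_{0\ne w\in\W_\tria}(Dv)(w)/\|w\|_\W$ is a supremum over the \emph{subset} $\W_\tria\setminus\{0\}$ of $\W\setminus\{0\}$, hence at most $\|Dv\|_{\W'}\le\|D\|_{\cL(\V,\W')}\|v\|_\V$, where it is used that $\V_\tria$ carries the norm $\|\cdot\|_\V$.

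For the ``if'' part, suppose such a $P_\tria$ exists. The annihilation property gives $(Dv)(w)=(Dv)(P_\tria w)$ for all $v\in\V_\tria$ and $w\in\W$, so in the supremum defining $\|Dv\|_{\W'}$ one may restrict to the test functions $P_\tria w\in\W_\tria$, at the price of the factor $\|P_\tria w\|_\W\le\|P_\tria\|_{\cL(\W,\W)}\|w\|_\W$; hence $\|Dv\|_{\W'}\le\|P_\tria\|_{\cL(\W,\W)}\|D_\tria v\|_{\W_\tria'}$. Combined with $\|v\|_\V\le\|D^{-1}\|_{\cL(\W',\V)}\|Dv\|_{\W'}$ this yields a strictly positive lower bound for $\inf_{0\ne v\in\V_\tria}\|D_\tria v\|_{\W_\tria'}/\|v\|_\V$, so $D_\tria$ is injective and, by \eqref{equaldims}, bijective onto $\W_\tria'$; the infimum just bounded equals $\|D_\tria^{-1}\|_{\cL(\W_\tria',\V_\tria)}^{-1}$ (Remark~\ref{closedrange}), which is \eqref{bound}.

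For the ``only if'' part, assume $D_\tria\in\Lis(\V_\tria,\W_\tria')$ and that $\W$ is a Hilbert space. Then $b(v,w):=(Dv)(w)$ is a perfect pairing between the equidimensional finite-dimensional spaces $\V_\tria$ and $\W_\tria$: nondegenerate in $v$ since $D_\tria$ is injective, and nondegenerate in $w$ since an element of $\W_\tria$ annihilating $\{D_\tria v:v\in\V_\tria\}=\ran D_\tria=\W_\tria'$ must vanish. Consequently, for each $w\in\W$ there is a unique $P_\tria w\in\W_\tria$ with $(Dv)(P_\tria w)=(Dv)(w)$ for all $v\in\V_\tria$; the linear map $P_\tria\colon\W\to\W_\tria\subset\W$ so defined fixes $\W_\tria$ pointwise (by uniqueness), hence is a projector onto $\W_\tria$, and satisfies $(D\V_\tria)((\identity-P_\tria)\W)=0$ by construction. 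It lies in $\cL(\W,\W)$ because $\ker P_\tria=\bigcap_{v\in\V_\tria}\ker(Dv)$ is a finite intersection of closed subspaces (each $Dv\in\W'$ being bounded), and a linear map with closed kernel and finite-dimensional range is bounded; using moreover that $\W$, being Hilbert, is reflexive — so that $D'\colon\W\to\V'$ is invertible — the transposed inf--sup identity for the square map $D_\tria$ even yields $\|P_\tria\|_{\cL(\W,\W)}\le\|D_\tria^{-1}\|_{\cL(\W_\tria',\V_\tria)}\|D\|_{\cL(\V,\W')}$.

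The main obstacle is precisely this last construction: one must recognise that the condition $(D\V_\tria)((\identity-P_\tria)\W)=0$ is exactly the requirement that $P_\tria w$ solve the finite, square, nondegenerate system attached to $b$ — which pins $P_\tria$ down on all of $\W$ — and then verify that this $P_\tria$ is a genuine \emph{bounded} projector of $\W$ onto $\W_\tria$, which is where the Hilbert-space hypothesis is invested. Everything else is routine manipulation of dual norms and an appeal to Remark~\ref{closedrange}.
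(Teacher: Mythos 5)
Your treatment of the norm bound and of the ``if'' direction is essentially the paper's own: replace $w$ by $P_\tria w$ in the supremum defining $\|Dv_\tria\|_{\W'}$, pay the factor $\|P_\tria\|_{\cL(\W,\W)}$, and conclude with Remark~\ref{closedrange} and \eqref{equaldims}; this part is correct. For the ``only if'' direction you take a genuinely different route. The paper defines $P_\tria w$ as the first component of the solution of a saddle point problem in $\W_\tria\times\V_\tria$ built from the $\W$-inner product (first equation $\langle w_\tria,z_\tria\rangle_\W+(D_\tria v_\tria)(z_\tria)=\langle w,z_\tria\rangle_\W$ for $z_\tria\in\W_\tria$, second equation the constraint $(D_\tria u_\tria)(w_\tria)=(D_\tria u_\tria)(w)$ for $u_\tria\in\V_\tria$), and the Hilbert structure is exactly what makes that problem well posed. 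You instead observe that invertibility of $D_\tria$ together with \eqref{equaldims} makes $(v,z)\mapsto(Dv)(z)$ a nondegenerate pairing of $\V_\tria$ with $\W_\tria$, so that the constraint alone already determines a unique $P_\tria w\in\W_\tria$, and you verify boundedness either by the closed-kernel/finite-range argument or by the sharper transposed bound $\|P_\tria\|_{\cL(\W,\W)}\le\|D_\tria^{-1}\|_{\cL(\W_\tria',\V_\tria)}\|D\|_{\cL(\V,\W')}$. This is correct, and in fact slightly stronger than the paper's statement: none of these steps uses the inner product, so the ``only if'' implication holds for $\W$ merely a (reflexive) Banach space. The two constructions produce the same operator --- since the constraint system is square and nondegenerate, the admissible set in the paper's saddle point problem is a singleton, and the Lagrange-multiplier formulation is only a device for certifying solvability by standard theory --- so what the paper's route buys is a ready-made citation, while yours buys generality and an explicit norm bound for $P_\tria$. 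Your parenthetical appeal to reflexivity and invertibility of $D'$ is superfluous (only the transpose of the finite-dimensional map $D_\tria$ is needed), but harmless.
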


\begin{proof} The first statement is obvious. Now let us assume existence of a (Fortin) projector $P_\tria$. Then for $v_\tria \in \V_\tria$,
\begin{align*}
 \|D^{-1}\|_{\cL(\W',\V)}^{-1} \|v_\tria\|_\V & \leq \sup_{0 \neq w \in \W} \frac{(D v_\tria)(w)}{\|w\|_\W}=
 \sup_{0 \neq w \in \W} \frac{(D v_\tria)(P_\tria w)}{\|w\|_\W}\\
 &  \leq \|P_\tria\|_{\cL(\W,\W)} \sup_{0 \neq w_\tria \in \W_\tria} \frac{(D v_\tria)(w_\tria)}{\|w_\tria\|_\W},
 \end{align*}
 which together with Remark~\ref{closedrange} and \eqref{equaldims} shows that $D_\tria \in \Lis(\V_\tria,\W_\tria')$, in particular \eqref{bound}.
 
 Conversely (cf. \cite[Remark 4.9]{31}), assume $D_\tria \in \Lis(\V_\tria,\W_\tria')$, \new{and let $\W$ be a Hilbert space}. Then given $w \in \W$, let $w_\tria$ be the first component of the solution $(w_\tria,v_\tria) \in \W_\tria \times \V_\tria$ of the well-posed saddle point problem
 \begin{alignat*}{4}
& \langle w_\tria,z_\tria \rangle_\W+ (D_\tria v_\tria)(z_\tria) && = &&  \langle w,z_\tria\rangle_\W &\quad &(z_\tria \in \W_\tria),\\
& (D_\tria u_\tria)(w_\tria) && = && (D_\tria u_\tria)(w) &\quad &(u_\tria \in \V_\tria).
 \end{alignat*}
Then $P_\tria:=w \mapsto w_\tria$ is a valid Fortin projector.
\end{proof}

In applications, one usually has a \emph{family} of spaces $\V_\tria$ and aims at a \emph{uniform} preconditioner $G_\tria$. 
In the setting of Proposition~\ref{fortin} it means that one searches a Fortin projector $P_\tria$ such that $ \|P_\tria\|_{\cL(\W,\W)} $ is \emph{uniformly} bounded.

\subsection{Implementation}
Given a finite collection $\Upsilon=\new{\{\upsilon\}}$ in a linear space, we set the \emph{synthesis operator} 
$$
\cF_\Upsilon:\R^{\# \Upsilon} \rightarrow \Span\Upsilon\colon {\bf c} \mapsto {\bf c}^\top \Upsilon:=\sum_{\upsilon \in \Upsilon} c_\upsilon \upsilon.
$$
Equipping $\R^{\# \Upsilon}$ with the Euclidean scalar product $\langle\,,\,\rangle$, and identifying $(\R^{\# \Upsilon})'$ with~$\R^{\# \Upsilon}$ using the corresponding Riesz map, we infer that the adjoint of $\cF_\Upsilon$, known as the \emph{analysis operator}, satisfies
$$
\cF'_\Upsilon:(\Span\Upsilon)' \rightarrow \R^{\# \Upsilon}:f \mapsto f(\Upsilon):=[f(\upsilon)]_{\upsilon \in \Upsilon}.
$$
A collection $\Upsilon$ is a \emph{basis} for its span when $\cF_\Upsilon\in \Lis(\R^{\# \Upsilon},\Span\Upsilon)$ (and so $\cF'_\Upsilon \in \Lis((\Span\Upsilon)' ,\R^{\# \Upsilon})$.)

Now let $\Xi_\tria=\new{\{\xi\}}$ and $\Psi_\tria=\new{\{\psi\}}$ be bases for $\V_\tria$ and $\W_\tria$, respectively.
Then in coordinates the preconditioned system reads as
$$
\cF_{\Xi_\tria}^{-1} G_\tria A_\tria \cF_{\Xi_\tria}= \bm{G}_\tria \bm{A}_\tria:=\bm{D}_\tria^{-1} \bm{B}_\tria \bm{D}_\tria^{-\top} \bm{A}_\tria,
$$
where
$$
\bm{A}_\tria:=\cF'_{\Xi_\tria} A_\tria \cF_{\Xi_\tria},\quad
\bm{B}_\tria:=\cF'_{\Psi_\tria} B_\tria \cF_{\Psi_\tria},\quad
\bm{D}_\tria:=\cF'_{\Psi_\tria} D_\tria \cF_{\Xi_\tria}.
$$

By identifying a map in $\cL(\R^{\# \Xi_\tria},\R^{\# \Xi_\tria})$ with a ${\# \Xi_\tria} \times {\# \Xi_\tria}$ matrix by equipping~$\R^{\# \Xi_\tria}$ with the canonical basis \new{$\{\bm{e}_i\}$}, \new{and by enumerating the elements of $\Xi_\tria$} one has
$$
(\bm{A}_\tria)_{\new{ij}}=\langle \cF'_{\Xi_\tria} A_\tria \cF_{\Xi_\tria} \bm{e}_{\new{j}}, \bm{e}_{\new{i}}\rangle=(A_\tria \cF_{\Xi_\tria}  \bm{e}_{\new{j}})(\cF_{\Xi_\tria}  \bm{e}_{\new{i}})=(A_\tria  \xi_{\new{j}})(\xi_{\new{i}}),
$$
and similarly,
$$
(\bm{B}_\tria)_{\new{ij}}=(B_\tria\psi_{\new{j}})(\psi_{\new{i}}), \quad (\bm{D}_\tria)_{\new{ij}}=(D_\tria \xi_{\new{j}})(\psi_{\new{i}}).
$$
\new{Preferably $\bm{D}_\tria$} is such that its inverse can be applied in linear complexity, as is the case when $\bm{D}_\tria$ is \emph{diagonal}.

\begin{remark}
\new{Using $\sigma(\,)$ and $\rho(\,)$ to denote the spectrum and spectral radius of an operator,}
clearly $\sigma(\bm{G}_\tria \bm{A}_\tria)=\sigma(G_\tria   A_\tria)$.  So for the spectral condition number we have
$$ 
\kappa_S(\bm{G}_\tria \bm{A}_\tria):=\rho(\bm{G}_\tria \bm{A}_\tria)\rho((\bm{G}_\tria \bm{A}_\tria)^{-1}) \leq \kappa_{\V_\tria,\V_\tria}(G_\tria A_\tria),
$$
which thus holds true \emph{independently} of the choice of the basis $\Xi_\tria$ for $\V_\tria$.
Furthermore, in view of an application of Conjugate Gradients, \new{if} $A_\tria$ and $B_\tria$ are coercive and \emph{self-adjoint}, then $\bm{A}_\tria$ and $\bm{G}_\tria$ are \new{positive definite and symmetric}.
Equipping $\R^{\dim \V_\tria}$ with $\lnrm\cdot\rnrm:=\|(\bm{G}_\tria)^{-\frac{1}{2}}\cdot\|$ or $\lnrm\cdot\rnrm:=\|(\bm{A}_\tria)^{\frac{1}{2}}\cdot\|$, in that case we have
$$
\kappa_{(\R^{\dim \V_\tria},\lnrm\cdot\rnrm),(\R^{\dim \V_\tria},\lnrm\cdot\rnrm)}(\bm{G}_\tria \bm{A}_\tria)=\kappa_S(\bm{G}_\tria \bm{A}_\tria)
.
$$
\end{remark}

\section{Preconditioning an operator of negative order discretized by piecewise constants} \label{Sdomain_case}
For  a bounded polytopal domain $\Omega \subset \R^d$, a measurable, closed, possibly empty $\gamma \subset \partial\Omega$, and an $s \in [0,1]$, we take
 $$
\framebox{$\W:=[L_2(\Omega),H^1_{0,\gamma}(\Omega)]_{s,2},\quad \V:=\W'$},
$$
where $H^1_{0,\gamma}(\Omega)$ is the closure in $H^1(\Omega)$ of the $C^\infty(\Omega) \cap H^1(\Omega)$ functions that vanish at $\gamma$.\footnote{In the domain case, it is easy to generalize the results to Sobolev spaces with smoothness index $s \in [0,\frac{3}{2})$, or even to $s \in (-\frac{1}{2},\frac{3}{2})$.}
The role of $D \in \Lis(\V,\W')$ in Proposition~\ref{fortin} is going to be played by \new{the unique extension to $\V \times \W$ of} the duality pairing
$$
(D v)(w):=\langle v,w\rangle_{L_2(\Omega)},
$$
which satisfies $\|D\|_{\cL(\V,\W')}=\|D^{-1}\|_{\cL(\W',\V)}=1$.  

 Let $(\tria)_{\tria \in \bbT}$ be a family of \emph{conforming} partitions of $\Omega$ into (open) \emph{uniformly shape regular} $d$-simplices, where we assume that $\gamma$ is the (possibly empty) union of $(d-1)$-faces of $T \in \tria$.
 Thanks to the conformity and the uniform shape regularity, for $d>1$ we know that neighbouring $T,T' \in \tria$, i.e.~$\overline{T} \cap \overline{T'} \neq \emptyset$, have uniformly comparable sizes.
 For $d=1$, we impose this uniform `\emph{$K$-mesh property}' explicitly.\footnote{For our convenience, throughout this paper we consider trial spaces w.r.t.~conforming partitions
 into uniformly shape regular $d$-simplices.
 It will however become clear that families of non-conforming partitions into uniformly shape regular $d$-simplices or hyperrectangles
 that satisfy a uniform $K$-mesh property can be dealt with as well.}
 
For $\tria \in \bbT$,  we define \new{$N^0_\tria$} as the set of vertices of $\tria$ that are not on $\gamma$, and for $\nu \in N^0_\tria$ we set its \emph{valence}
$$
d_{\tria,\nu}:=\#\{T \in \tria\colon \nu \in \overline{T}\}.
$$
For $T \in \tria$, and with $N_T$ denoting the set of its vertices, we set $\new{N^0_{\tria,T}}:=N^0_\tria \cap N_T$, and define $h_T:=|T|^{1/d}$.

 We take
 $$
\framebox{$\V_\tria=\Ss^{-1,0}_\tria:=\{u \in L_2(\Omega)\colon u|_T \in \cP_0 \,(T \in \tria)\} \subset \V$},
$$
and, as a first ingredient in the \new{forthcoming} construction of a suitable $\W_\tria$, \new{define the space of continuous piecewise linears, zero on $\gamma$, by}
$$
\Ss^{0,1}_{\tria,0}:=\{u \in H^1_{0,\gamma}(\Omega)\colon u|_T \in \cP_1 \,(T \in \tria)\},
 $$
 equipped with the usual bases
 \be \label{20}
 \Xi_\tria=\{\xi_T\colon T \in \tria\}, \quad \Phi_\tria=\{\phi_{\tria,\nu}\colon \nu \in N^0_\tria\},
 \ee
 respectively, defined by
 \be \label{21}
 \xi_T:=\left\{\begin{array}{cl} 1 & \text{on } T,\\ 0 & \text{on } \Omega \setminus T,\end{array} \right.\quad
 \phi_{\tria,\nu}(\nu')=\delta_{\nu,\nu'} \quad (\nu,\nu'\in N^0_\tria).
 \ee
 
\subsection{Construction of $\W_\tria$ and $\bm{D}_\tria$.}
\new{Aiming at the construction of a (uniform) preconditioner $G_\tria \in \cLis(\V_\tria',\V_\tria)$ using the framework of operator preconditioning,} we are going to construct a collection $\Psi_\tria \subset H^1_{0,\gamma}(\Omega)$ that is biorthogonal to $\Xi_\tria$, 
\new{whose elements are `locally supported', and for which}
$$
\framebox{$\W_\tria:=\Span \Psi_\tria \subset \W$}
$$
has an `approximation property'. These \new{three} properties of $\Psi_\tria$ will allow us to construct a suitable Fortin projector, and they will give rise to a matrix $\bm{D}_\tria$ \new{that is \emph{diagonal}}.

The construction of $\Psi_\tria$ builds on two collections $\Theta_\tria$ and $\Sigma_\tria$ \new{of `locally supported' functions} in  $H^1_{0,\gamma}(\Omega)$ whose cardinalities are equal to that of $\Xi_\tria$, the first being biorthogonal to $\Xi_\tria$, and the second \new{whose span has an `approximation property' and is} inside $\Ss^{0,1}_{\tria,0}$.\note{Less relevant footnote removed}
 
 Let $\Theta_\tria=\{\theta_T\colon T \in \tria\} \subset H^1_{0,\gamma}(\Omega)$  be such \new{that $\theta_T\geq 0$, $\supp \theta_T \subset \overline{T}$,}
 \be \label{C1}
 \langle \theta_T,\xi_{T'}\rangle_{L_2(\Omega)}  \eqsim \delta_{T T'} \|\theta_T\|_{L_2(\Omega)} \|\xi_{T'}\|_{L_2(\Omega)},\quad(T,T'\in \tria),
\ee
\new{and, for convenience only, that is scaled such that
\be\label{scale}
 \langle \theta_T,\xi_{T}\rangle_{L_2(\Omega)} =|T|.
\ee

One obvious possible construction of such $\Theta_\tria$ is to take $\theta_T$ to be the `bubble function' defined by $\theta_T(x)=\frac{(2 d+1)!}{d} \prod_{i=1}^{d+1} \lambda_i(x)$ for $x \in T$, and zero elsewhere, where $(\lambda_1(x),\cdots,\lambda_{d+1}(x))$ are the barycentric coordinates of $x$ w.r.t. $T$
(see e.g. \cite{308.5} for \eqref{scale}).
 Two forthcoming (harmless) conditions \eqref{14} and \eqref{stab} on $\Theta_\tria$ will be satisfied as well by the above specification of  $\theta_T$.
 
 Another, equally suited construction is, after making a uniform barycentric refinement of $\tria$, to take $\theta_T$ as a the continuous piecewise linear hat function associated to the barycenter of $T$, multiplied by a factor $d+1$.
 
 We emphasize that the resulting preconditioner will \emph{not} depend on the actual construction of $\Theta_\tria$,
 but that only \emph{existence} of a collection with the aforementioned properties is relevant.}
 
   Defining $\Sigma_\tria=\{\sigma_{\tria,T}\colon T \in \tria\} \subset \Ss^{0,1}_{\tria,0}$ by
 $$
\sigma_{\tria,T} :=\sum_{\nu \in N^0_{\tria,T} }d_{\tria,\nu}^{-1} \phi_{\tria,\nu},
$$
 we have
 $$
 \sum_{T \in \tria} \sigma_{\tria,T}=\sum_{\nu \in N^0_\tria} \phi_{\tria,\nu},
 $$
 being equal to the constant function $\1$ on $\Omega \setminus \cup_{\{T \in \tria\colon \overline{T} \cap \gamma \neq \emptyset\}} \overline{T}$,
 \new{which yields the aforementioned `approximation property' (cf. footnote~\ref{approxprop}).}

 We now define
$$
\Psi_\tria:=\{\psi_{\tria,T} \colon T \in \tria\} \subset \Ss^{0,1}_{\tria,0}  \oplus  \Span \Theta_\tria,
 $$
 by
\be \label{defpsi}
\framebox{$\displaystyle  \psi_{\tria,T} := \sigma_{\tria,T}+\frac{\langle \1-\sigma_{\tria,T},\xi_T\rangle_{L_2(\Omega)}}{\langle \theta_T,\xi_T\rangle_{L_2(\Omega)}} \theta_T-\sum_{T' \in \tria\setminus\{T\}} \frac{\langle \sigma_{\tria,T},\xi_{T'}\rangle_{L_2(\Omega)}}{\langle \theta_{T'},\xi_{T'}\rangle_{L_2(\Omega)}} \theta_{T'}$},
 \ee
 The third term at the right-hand side corrects $\sigma_{\tria,T}$ such that it becomes orthogonal to $\xi_{T'}$ for $T' \neq T$, whereas the second term \new{ensures that the $\psi_{\tria,T}$ sum up to $\1$, possibly except on a strip along the Dirichlet boundary:}

  \begin{lemma} \label{prop1} It holds that
 \be \label{5}
\sum_{T \in \tria} \psi_{\tria,T} =\sum_{T \in \tria} \sigma_{\tria,T}+\sum_{T \in \tria} \frac{\langle\1-\sum_{T' \in \tria} \sigma_{\tria,T'},\xi_T\rangle_{L_2(\Omega)}}{\langle \theta_T,\xi_T\rangle_{L_2(\Omega)}} \theta_T,
\ee
and
 \be \label{6}
 \langle \Xi_\tria,\Psi_\tria \rangle_{L_2(\Omega)}=\diag\{\langle \1,\xi_T\rangle_{L_2(\Omega)}\colon T \in \tria\}.
\ee
 \end{lemma}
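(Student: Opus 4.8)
The plan is to prove both identities by direct computation, the only structural inputs being that $\supp\theta_T\subset\overline{T}$ — so that $\langle\theta_T,\xi_{T'}\rangle_{L_2(\Omega)}=0$ whenever $T'\neq T$, while $\langle\theta_T,\xi_T\rangle_{L_2(\Omega)}\neq0$ by \eqref{C1} — together with the fact that $\xi_{T'}$ is supported on $T'$, and elementary rearrangements of finite sums.

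For \eqref{5} I would sum the defining formula \eqref{defpsi} over $T\in\tria$ and interchange the order of summation in the double sum $\sum_{T}\sum_{T'\neq T}$. Collecting, for each fixed $T'$, the coefficient of $\theta_{T'}$ coming from the second and third terms produces $\langle\1-\sigma_{\tria,T'},\xi_{T'}\rangle_{L_2(\Omega)}-\big\langle\sum_{T\neq T'}\sigma_{\tria,T},\xi_{T'}\big\rangle_{L_2(\Omega)}=\big\langle\1-\sum_{T}\sigma_{\tria,T},\xi_{T'}\big\rangle_{L_2(\Omega)}$, divided by $\langle\theta_{T'},\xi_{T'}\rangle_{L_2(\Omega)}$; renaming the summation index $T'$ back to $T$ yields \eqref{5}.

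For \eqref{6} I would compute $\langle\xi_{T''},\psi_{\tria,T}\rangle_{L_2(\Omega)}$ for arbitrary $T,T''\in\tria$ by pairing $\xi_{T''}$ against \eqref{defpsi} term by term, using $\langle\xi_{T''},\theta_S\rangle_{L_2(\Omega)}=\delta_{T''S}\langle\theta_S,\xi_S\rangle_{L_2(\Omega)}$. When $T''=T$ the third term drops out (its summation avoids $T$) and the second term collapses to $\langle\1-\sigma_{\tria,T},\xi_T\rangle_{L_2(\Omega)}$, so the total equals $\langle\sigma_{\tria,T},\xi_T\rangle_{L_2(\Omega)}+\langle\1-\sigma_{\tria,T},\xi_T\rangle_{L_2(\Omega)}=\langle\1,\xi_T\rangle_{L_2(\Omega)}$. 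When $T''\neq T$ the second term drops out, only the $T'=T''$ summand survives in the third term and equals $\langle\sigma_{\tria,T},\xi_{T''}\rangle_{L_2(\Omega)}$, which cancels the first term $\langle\xi_{T''},\sigma_{\tria,T}\rangle_{L_2(\Omega)}$; hence the pairing vanishes. This shows $\langle\Xi_\tria,\Psi_\tria\rangle_{L_2(\Omega)}=\diag\{\langle\1,\xi_T\rangle_{L_2(\Omega)}\colon T\in\tria\}$.

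There is no genuine obstacle; the only points requiring care are the index bookkeeping in the double sum for \eqref{5} (handling the constraint $T'\neq T$ correctly upon swapping the sums) and noting that the third term of \eqref{defpsi} never contributes on the diagonal because its summation index omits $T$. One could equally present \eqref{6} first — it is precisely the orthogonality property by which the second and third terms of \eqref{defpsi} were designed in the text preceding the lemma — and then derive \eqref{5} as a consequence of summing.
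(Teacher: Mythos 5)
Your proof is correct and is essentially the paper's argument: identity \eqref{6} follows from the biorthogonality $\langle\xi_{T''},\theta_{T'}\rangle_{L_2(\Omega)}=\delta_{T''T'}\langle\theta_{T'},\xi_{T'}\rangle_{L_2(\Omega)}$ (a consequence of $\supp\theta_{T'}\subset\overline{T'}$ and \eqref{C1}), and \eqref{5} follows from summing \eqref{defpsi} and cancelling the two double sums, which is the same elementary rearrangement the paper performs by splitting $\1-\sigma_{\tria,T}=\sum_{T'\neq T}\sigma_{\tria,T'}+(\1-\sum_{T'}\sigma_{\tria,T'})$. The only difference is cosmetic bookkeeping (you collect the coefficient of each $\theta_{T'}$ after swapping the sums, the paper cancels the swapped double sums directly), so no further changes are needed.
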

 
 \begin{proof}
 Writing $\1-\sigma_{\tria,T}=\sum_{T' \in \tria \setminus\{T\}} \sigma_{\tria,T'}+(\1-\sum_{T'\in \tria} \sigma_{\tria,T'})$, \eqref{5} follows from \eqref{defpsi}
\new{by using that
$$
\sum_{T \in \tria}  \sum_{T' \in \tria  \setminus\{T\}}  \frac{\langle\sigma_{\tria,T'},\xi_T\rangle_{L_2(\Omega)}}{\langle \theta_T,\xi_T\rangle_{L_2(\Omega)}} \theta_T
 -
 \sum_{T \in \tria} \sum_{T' \in \tria  \setminus\{T\}} \frac{\langle\sigma_{\tria,T},\xi_{T'}\rangle_{L_2(\Omega)}}{\langle \theta_{T'},\xi_{T'}\rangle_{L_2(\Omega)}} \theta_{T'}=0.
 $$}
 The biorthonormality of $\Xi_\tria$ and $\{\theta_T/\langle \theta_T,\xi_T\rangle_{L_2(\Omega)}\colon T \in \tria\}$ shows \eqref{6}.
 \end{proof}

\new{By} expanding $\sigma_{\tria,T}$ in terms of the nodal basis, \new{and by} using $\int_T \phi_{\tria,\nu}\,dx=\frac{|T|}{d+1}$ \new{and the normalization \eqref{scale}}, we arrive at the \new{explicit} expression
\be
\label{psi1}
 \framebox{$\displaystyle \psi_{\tria,T} := \sum_{\nu \in N^0_{\tria,T}} \hspace*{-0.4em}d_{\tria,\nu}^{-1} \phi_{\tria,\nu}+
\big( 1-{\textstyle \frac{1}{d+1}}\hspace*{-0.8em}\sum_{\nu \in N^0_{\tria,T}} d_{\tria,\nu}^{-1}\big) \theta_T
 - \hspace*{-0.4em}\sum_{T' \in \tria\setminus\{T\}} \hspace*{-0.6em}\big({\textstyle \frac{1}{d+1}}\hspace*{-1.3em}\sum_{\nu \in N^0_{\tria,T} \cap N^0_{\tria,T'}} \hspace*{-0.8em}d_{\tria,\nu}^{-1}\big) \theta_{T'}$},
\ee
see Figure~\ref{fig1} for an illustration. 
\begin{figure}[ht]
\begin{center}
\input{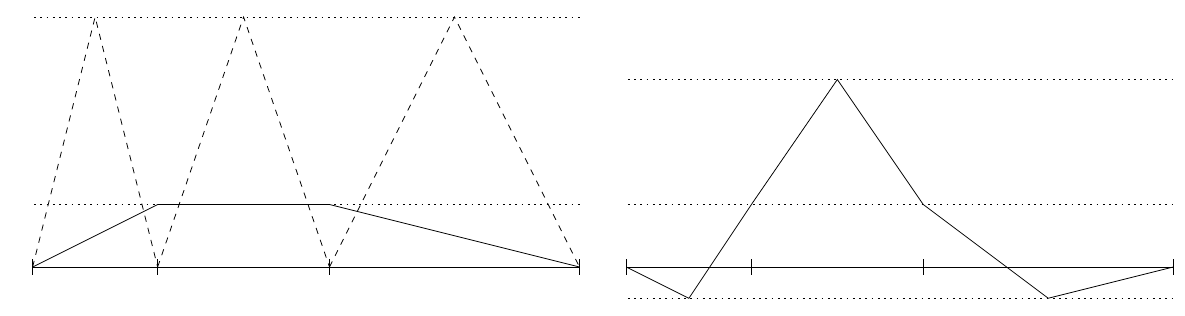_t}
\end{center}
\caption{$\psi_{\tria,T}$ in one dimension (with bubbles constructed using a barycentric refinement).}
\label{fig1}
\end{figure}

 As a consequence of \new{$\langle \Xi_\tria,\Psi_\tria \rangle_{L_2(\Omega)}$ being invertible}, the biorthogonal `Fortin' projector $P_\tria:L_2(\Omega) \rightarrow H^1_{0,\gamma}(\Omega)$ with $\ran P_\tria=\W_\tria$ and $\ran (\identity -P_\tria)=\V_\tria^{\perp_{L_2(\Omega)}}$ exists, and is, \new{thanks to \eqref{6}}, given by
 $$
 P_\tria u=\sum_{T \in \tria} \frac{\langle u,\xi_T \rangle_{L_2(\Omega)}}{\langle \1,\xi_T\rangle_{L_2(\Omega)}} \psi_{\tria,T}.
 $$
 
\new{To prepare for the proof of (uniform) boundedness of $P_\tria$,} we list a few properties of the collections $\Xi_\tria$, $\Theta_\tria$ and $\Sigma_\tria$. For $T \in \tria$, we set $\omega^{(0)}_\tria(T):=\overline{T}$, and for $i=0,1,\ldots$, define \new{the `rings'}
 $$
 R^{(i+1)}_\tria(T):=\{T' \in \tria\colon \overline{T'} \cap \omega^{(i)}_\tria(T)\neq \emptyset\}, \quad \omega^{(i+1)}_\tria(T):=\cup_{T' \in R^{(i+1)}_\tria(T)} \overline{T'}.
 $$
 It holds that
\new{\begin{align} \label{e1}
 {\rm (a)}\,& \supp \xi_T \subset \omega^{(0)}_\tria(T),\quad
 {\rm (b)}\, \supp \sigma_{\tria,T} \subset \omega^{(1)}_\tria(T),\quad
 {\rm (c)}\, \supp \theta_T \subset \omega^{(0)}_\tria(T),\\ \label{e2}
 &\|\sigma_T\|_{H^k(\Omega)} \lesssim h_T^{d/2-k}\quad (k \in \{0,1\}),\\ \label{e3}
& \|\theta_T\|_{H^1(\Omega)} \lesssim h_T^{-1} \|\theta_T\|_{L_2(\Omega)},
\intertext{whilst moreover $\Xi_\tria$ is such that} \label{e4}
&\langle \1,\xi_T\rangle_{L_2(\Omega)} \eqsim h_T^{d/2} \|\xi_T\|_{L_2(\Omega)}.
 \end{align}
 
 From \eqref{e1}(b,c), we obtain that 
 \be \label{e5}
 \supp \psi_{\tria,T} \subset \omega^{(1)}_\tria(T).
 \ee
 By using \eqref{C1}, \eqref{e3}, \eqref{e2} and \eqref{e1}(a) we infer that for 
 $k \in \{0,1\}$
 $$
\Big\|\frac{\langle \sigma_{\tria,T}-\delta_{T T'}\1,\xi_{T'}\rangle_{L_2(\Omega)}}{\langle \theta_{T'},\xi_{T'}\rangle_{L_2(\Omega)}} \theta_{T'} \Big\|_{H^k(\Omega)}
\lesssim h_T^{-k} \|\sigma_{\tria,T}-\delta_{T T'}\1\|_{L_2(\supp \xi_{T'})}
\lesssim h_T^{d/2-k},
 $$
which, by again using \eqref{e2} and \eqref{e3}, shows that
 \be \label{e6}
 \|\psi_{\tria,T}\|_{H^k(\Omega)} \lesssim h_T^{d/2-k} \quad (k \in \{0,1\}).
 \ee}
  \note{Remark removed}
 
 \begin{theorem} \label{theorem1} It holds that $\sup_{\tria \in \bbT} \|P_\tria\|_{\cL(\W,\W)} <\infty$.
 \end{theorem}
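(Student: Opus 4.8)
The plan is to reduce to the two endpoint spaces: since $\W=[L_2(\Omega),H^1_{0,\gamma}(\Omega)]_{s,2}$ with norm equivalence constants that do not depend on $\tria$, it suffices to show that $\sup_{\tria\in\bbT}\|P_\tria\|_{\cL(L_2(\Omega),L_2(\Omega))}<\infty$ and $\sup_{\tria\in\bbT}\|P_\tria\|_{\cL(H^1_{0,\gamma}(\Omega),H^1_{0,\gamma}(\Omega))}<\infty$, and then conclude by interpolation via $\|P_\tria\|_{\cL(\W,\W)}\lesssim\|P_\tria\|_{\cL(L_2(\Omega),L_2(\Omega))}^{1-s}\|P_\tria\|_{\cL(H^1_{0,\gamma}(\Omega),H^1_{0,\gamma}(\Omega))}^{s}$. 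Beforehand I would record the purely geometric facts that are uniform over $\bbT$ by shape regularity (and the $K$-mesh property when $d=1$): the patches $\{\omega^{(1)}_\tria(T)\}_{T\in\tria}$ have finite overlap with a bound independent of $\tria$; neighbouring simplices have comparable diameters; and each fixed $T'\in\tria$ meets $\omega^{(1)}_\tria(T)$ for only a uniformly bounded number of $T$, all with $h_T\eqsim h_{T'}$. Combined with $\supp\psi_{\tria,T}\subset\omega^{(1)}_\tria(T)$ from \eqref{e5}, this says that on a fixed element the sum defining $P_\tria u$ has a uniformly bounded number of nonzero terms.

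For the $L_2$-bound I would use $\langle\1,\xi_T\rangle_{L_2(\Omega)}=|T|=h_T^{d}$ and $|\langle u,\xi_T\rangle_{L_2(\Omega)}|\le|T|^{1/2}\|u\|_{L_2(T)}$, so that by finite overlap of the supports and \eqref{e6} with $k=0$,
$\|P_\tria u\|_{L_2(\Omega)}^2\lesssim\sum_{T\in\tria}\big(|T|^{-1}|T|^{1/2}\|u\|_{L_2(T)}\big)^2\|\psi_{\tria,T}\|_{L_2(\Omega)}^2\lesssim\sum_{T\in\tria}\|u\|_{L_2(T)}^2=\|u\|_{L_2(\Omega)}^2$.
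Since $P_\tria$ is idempotent with range in $L_2(\Omega)$, this is a genuine uniform projector bound.

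The $H^1$-bound is the main obstacle, and it hinges on the fact that $P_\tria$ reproduces constants away from $\gamma$: by \eqref{5}, $\sum_{T\in\tria}\psi_{\tria,T}=\1$ on $\Omega$ minus the closed strip $S_\gamma:=\bigcup_{\{T:\overline T\cap\gamma\neq\emptyset\}}\overline T$, while $\langle c,\xi_T\rangle_{L_2(\Omega)}/\langle\1,\xi_T\rangle_{L_2(\Omega)}=c$ for constant $c$, so $P_\tria c=c$ there. Fix $T'\in\tria$ and let $\tilde\omega(T')$ be the union of those $T\in\tria$ with $\psi_{\tria,T}|_{T'}\not\equiv0$; this is a connected patch contained in $\omega^{(2)}_\tria(T')$, of uniformly bounded cardinality, with all $h_T\eqsim h_{T'}$. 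If $T'\cap S_\gamma=\emptyset$, take $c$ to be the average of $u$ over $\tilde\omega(T')$, write $P_\tria u=P_\tria(u-c)+c$ on $T'$, and use \eqref{e6} with $k=1$, Cauchy--Schwarz, the bounded number of terms, and a Poincar\'e inequality on the shape-regular patch $\tilde\omega(T')$ to get $|P_\tria u|_{H^1(T')}=|P_\tria(u-c)|_{H^1(T')}\lesssim h_{T'}^{-1}\|u-c\|_{L_2(\tilde\omega(T'))}\lesssim|u|_{H^1(\tilde\omega(T'))}$. If instead $T'\cap S_\gamma\neq\emptyset$, then a fixed-size neighbourhood $\hat\omega(T')\supset\tilde\omega(T')$ (e.g.\ $\omega^{(4)}_\tria(T')$) contains a $(d-1)$-face lying in $\gamma$ of measure $\eqsim h_{T'}^{d-1}$, so using $u\in H^1_{0,\gamma}(\Omega)$ and a Friedrichs inequality on $\hat\omega(T')$ (uniform constant by shape regularity) in place of Poincar\'e, one bounds $|P_\tria u|_{H^1(T')}\lesssim h_{T'}^{-1}\|u\|_{L_2(\hat\omega(T'))}\lesssim|u|_{H^1(\hat\omega(T'))}$ directly from the definition of $P_\tria u$. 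Squaring, summing over $T'\in\tria$, and invoking finite overlap once more gives $|P_\tria u|_{H^1(\Omega)}\lesssim|u|_{H^1(\Omega)}$; together with the $L_2$-bound this yields $\sup_{\tria\in\bbT}\|P_\tria\|_{\cL(H^1_{0,\gamma}(\Omega),H^1_{0,\gamma}(\Omega))}<\infty$.

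Feeding the two endpoint bounds into the interpolation inequality then proves $\sup_{\tria\in\bbT}\|P_\tria\|_{\cL(\W,\W)}<\infty$. I expect the only genuinely delicate point to be the local constant-reproduction argument and its replacement near $\gamma$ by a Friedrichs estimate on a controlled neighbourhood; the rest is locality bookkeeping made uniform by shape regularity and the estimates \eqref{e5}--\eqref{e6}.
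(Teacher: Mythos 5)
Your proposal is correct in substance and shares the paper's skeleton --- the local bound \eqref{10} yielding uniform $L_2$-stability, then $H^1$-stability, then interpolation between the endpoint spaces --- but your treatment of the $H^1$ endpoint is genuinely different from the paper's. The paper temporarily extends the construction of $\Psi_\tria$ by the vertices on $\gamma$ so that $\sum_{T\in\tria}\psi_{\tria,T}=\1$ on all of $\Omega$ (see \eqref{9}), builds a Scott--Zhang type quasi-interpolator $\Pi_\tria u=\sum_{T}g_{\tria,T}(u)\psi_{\tria,T}$ whose face-average functionals are taken on faces in $\gamma$ for strip elements (so that $\Pi_\tria$ maps $H^1_{0,\gamma}(\Omega)$ into the original $\W_\tria$), proves the approximation estimate \eqref{11} by Bramble--Hilbert, and concludes from the splitting $P_\tria=\Pi_\tria+P_\tria(\identity-\Pi_\tria)$ combined with \eqref{10}. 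You instead argue element by element with $P_\tria$ itself: away from the Dirichlet strip, $\sum_{T}\psi_{\tria,T}=\1$ and $\langle c,\xi_T\rangle_{L_2(\Omega)}/\langle\1,\xi_T\rangle_{L_2(\Omega)}=c$ give local reproduction of constants, so \eqref{10} plus a Poincar\'e inequality on the vertex patch suffices; on strip elements you replace this by a Friedrichs inequality on a patch containing a $(d-1)$-face in $\gamma$ of measure $\eqsim h_{T'}^{d-1}$, which indeed exists because $\gamma$ is a union of faces of $\tria$ and neighbouring elements have comparable size. Both routes are sound; yours is more self-contained (no auxiliary operator, no extension of $\Psi_\tria$ across $\gamma$), at the price of verifying the local constant reproduction on every non-strip element and uniform Poincar\'e/Friedrichs constants on vertex patches whose interiors are connected only through vertex stars --- the same kind of shape-regularity facts the paper delegates to the Scott--Zhang machinery. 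The paper's route packages all boundary handling into the choice of the functionals $g_{\tria,T}$, needs a single Bramble--Hilbert estimate, and has the additional payoff that the very same $\Pi_\tria$ is reused verbatim in the higher-order case (proof of Theorem~\ref{thm10}), which your element-wise argument would have to redo there.
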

 
 \begin{proof} 
 \new{From \eqref{e5}, \eqref{e6}, and \eqref{e4},} we have
 \begin{align} \nonumber
 \|P_\tria u\|_{H^k(T)} &\leq \sum_{T' \in R^{(1)}_\tria(T)} \|\psi_{\tria,T'}\|_{H^k(\new{\Omega})} \frac{\|u\|_{L_2(T')}\|\xi_{T'}\|_{L_2(\new{\Omega})}}{|\langle \1,\xi_{T'}\rangle_{L_2(\Omega)}|} \\ \label{10}
& \lesssim h_T^{-k} \|u\|_{L_2(\omega^{(1)}_\tria(T))} \quad(k \in \{0,1\}),
 \end{align}
which in particular shows that
 \be \label{12}
 \sup_{\tria \in \bbT} \|P_\tria\|_{\cL(L_2(\Omega),L_2(\Omega))} <\infty.
 \ee
 
 To continue, we revisit the construction of $\W_\tria$ and its basis $\Psi_\tria$
 by temporarily including in $N^0_\tria$ also vertices of $\tria$ that \new{lie} on the Dirichlet boundary $\gamma$.
\new{Denoting the extended set of vertices by $N_\tria$}, consequently for the `new' $\psi_{\tria,T}$, \eqref{5} shows that
 \be \label{9}
 \sum_{T \in \tria} \psi_{\tria,T}=\sum_{\nu  \in N_\tria} \phi_{\tria,\nu}=\1 \text{ on }\Omega.
 \ee
 
 For any $\nu \in N_\tria$, we select \new{a} $(d-1)$-face $e$ of a $T \in \tria$ with $\nu \in e$ and $e \subset \gamma$ if $\nu \in \gamma$, and define the functional 
 $$
 g_{\tria,\nu}(u):=\fint_e u \, ds.
 $$
 By the trace theorem and homogeneity arguments (see e.g \cite[(3.6)]{247.2}), one infers that
 \be \label{13}
 | g_{\tria,\nu}(u)|\leq |e|^{-1} \|u\|_{L_1(e)} \lesssim h_T^{-\frac{d}{2}} \|u\|_{L_2(T)}+h_T^{-\frac{d}{2}+1} |u|_{H^1(T)}.
 \ee
 
 For $T \in \tria$, we select a $\nu \in N_T$ with $\nu \in \gamma$ if $\overline{T} \cap \gamma \neq \emptyset$, and define
$$
 g_{\tria,T}:=g_{\tria,\nu},
 $$
 and a Scott-Zhang (\cite{247.2}) type \new{quasi-}interpolator $\Pi_\tria:H^1(\Omega) \rightarrow \W_\tria$\footnote{\new{The existence of such a $\Pi_\tria$ which satisfies an estimate of type \eqref{11} for $k=0$ can be used as a definition of a (lowest order) approximation property of $\W_\tria$.}\label{approxprop}} by
 $$
 \Pi_\tria u=\sum_{T \in \tria} g_{\tria,T}(u) \psi_{\tria,T}.
 $$
It satisfies
 $$
\|\Pi_\tria u\|_{H^k(T)} \lesssim h_T^{-k} \|u\|_{L_2(\omega^{(2)}_\tria(T))}+h_T^{1-k}|u|_{H^1(\omega^{(2)}_\tria(T))} \quad(k \in \{0,1\}).
 $$
 
 Invoking \eqref{9} and using that $g_{\tria,T}(\1)=1$, we infer that for $k \in \{0,1\}$
 \be \label{11}
 \begin{split}
\hspace*{-5.5em} \|(\identity-\Pi_\tria) u\|_{H^k(T)}&=\inf_{p \in \cP_0} \|(\identity-\Pi_\tria) (u-p)\|_{H^k(T)}\\
 & \leq \inf_{p \in \cP_0} \|u-p\|_{H^k(T)}\!+\!h_T^{-k} \|u-p\|_{L_2(\omega^{(2)}_\tria(T))}\!+\!h_T^{1-k} |u|_{H^1(\omega^{(2)}_\tria(T))}\hspace*{-5.5em} \\
& \eqsim \inf_{p \in \cP_0} h_T^{-k}\|u-p\|_{L_2(\omega^{(2)}_\tria(T))}+h_T^{1-k}|u|_{H^1(\omega^{(2)}_\tria(T))}\\
 &\eqsim h_T^{1-k}|u|_{H^1(\omega^{(2)}_\tria(T))}
  \end{split}
\ee
by an application of the Bramble-Hilbert lemma (cf. \cite[(4.2)]{247.2}).

Noting that the `new' $\psi_{\tria,T}$ differs only from the `old', original one when $\overline{T} \cap \gamma \neq \emptyset$, and that for those $T$ and $u \in H^1_{0,\gamma}(\Omega)$ it holds that $g_{\tria,T}(u)=0$, we conclude that $\ran \Pi_\tria |_{H^1_{0,\gamma}(\Omega)}$ is included in the original space $\W_\tria$, which we consider again from here on.
Using that $P_\tria$ is a projector onto this $\W_\tria$, for $u \in H^1_{0,\gamma}(\Omega)$ writing 
$P_\tria u=\Pi_\tria u+P_\tria(\identity-\Pi_\tria)u$, using  \eqref{10} and \eqref{11} for $k\in \{0,1\}$ we arrive at
\begin{align*}
\|P_\tria u\|_{H^1(T)} &\lesssim \|\Pi_\tria u\|_{H^1(T)}+h_T^{-1} \|(\identity-\Pi_\tria) u\|_{L_2(\omega_\tria^{(1)}(T))}\\
& \lesssim \|u\|_{H^1(\omega^{(2)}_\tria(T))}+h_T^{-1} \|(\identity-\Pi_\tria) u\|_{L_2(\omega_\tria^{(1)}(T))}\\
& \lesssim \|u\|_{H^1(\omega^{(3)}_\tria(T))},
\end{align*}
and consequently,
$$
 \sup_{\tria \in \bbT} \|P_\tria\|_{\cL(H^1_{0,\gamma}(\Omega),H^1_{0,\gamma}(\Omega)))} <\infty.
$$
In combination with \eqref{12}, the proof is completed by an application of the Riesz-Thorin interpolation theorem.
 \end{proof}
 
\new{From Proposition~\ref{fortin} and Theorem~\ref{theorem1} we conclude the following:
\begin{corollary} For $D_\tria\colon \V_\tria \rightarrow \W'_\tria$ defined by $(D_\tria v)(w):=(Dv)(w)=\langle v,w\rangle_{L_2(\Omega)}$, it holds that
$D_\tria \in \Lis(\V_\tria,\W_\tria')$ with $\|D_\tria\|_{\cL(\V_\tria,\W_\tria')} \leq 1$ and $\sup_{\tria \in \bbT} \|D^{-1}_\tria\|_{\cL(\W'_\tria,\V_\tria)}\leq \sup_{\tria \in \bbT} \|P_\tria\|_{\cL(\W_\tria,\W_\tria)}<\infty$.
 \end{corollary}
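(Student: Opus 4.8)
The plan is to obtain this as a direct application of Proposition~\ref{fortin}, with $\V=\W'$ and with $\W$, $D$ as fixed at the beginning of this section, taking for the Fortin projector the biorthogonal projector $P_\tria$ whose uniform boundedness was just established in Theorem~\ref{theorem1}. Thus the only thing to do is to verify the three hypotheses of Proposition~\ref{fortin}: (i) $D\in\Lis(\V,\W')$; (ii) the dimension condition \eqref{equaldims}, i.e.\ $\dim\W_\tria=\dim\V_\tria$; and (iii) the existence of a projector $P_\tria\in\cL(\W,\W)$ onto $\W_\tria$ with $(D\V_\tria)((\identity-P_\tria)\W)=0$.

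For (i), this was recorded when $D$ was introduced: the (extension to $\V\times\W$ of the) $L_2(\Omega)$-duality pairing satisfies $\|D\|_{\cL(\V,\W')}=\|D^{-1}\|_{\cL(\W',\V)}=1$, and the first statement of Proposition~\ref{fortin} then already gives $\|D_\tria\|_{\cL(\V_\tria,\W_\tria')}\leq\|D\|_{\cL(\V,\W')}=1$. For (ii), identity \eqref{6} of Lemma~\ref{prop1} shows that $\langle\Xi_\tria,\Psi_\tria\rangle_{L_2(\Omega)}$ is an invertible (diagonal) matrix, so $\Psi_\tria$ is linearly independent and hence a basis of $\W_\tria$; since $\Xi_\tria$ and $\Psi_\tria$ are both indexed by $\tria$, this yields $\dim\W_\tria=\#\tria=\dim\V_\tria$. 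For (iii), the biorthogonal projector $P_\tria$ was constructed, as a consequence of \eqref{6}, as the projector $L_2(\Omega)\to H^1_{0,\gamma}(\Omega)$ with $\ran P_\tria=\W_\tria$ and $\ran(\identity-P_\tria)=\V_\tria^{\perp_{L_2(\Omega)}}$; since $\W_\tria\subset\W$ and $\W\hookrightarrow L_2(\Omega)$, for every $w\in\W$ and $v\in\V_\tria$ one has $(Dv)((\identity-P_\tria)w)=\langle v,(\identity-P_\tria)w\rangle_{L_2(\Omega)}=0$, so $P_\tria|_{\W}$ is a Fortin projector, and by Theorem~\ref{theorem1} it lies in $\cL(\W,\W)$ with $\sup_{\tria\in\bbT}\|P_\tria\|_{\cL(\W,\W)}<\infty$.

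With the hypotheses checked, Proposition~\ref{fortin} gives $D_\tria\in\Lis(\V_\tria,\W_\tria')$ together with the bound \eqref{bound}, which here reads $\|D_\tria^{-1}\|_{\cL(\W_\tria',\V_\tria)}\leq\|P_\tria\|_{\cL(\W,\W)}\,\|D^{-1}\|_{\cL(\W',\V)}=\|P_\tria\|_{\cL(\W,\W)}$; taking the supremum over $\tria\in\bbT$ and invoking Theorem~\ref{theorem1} finishes the argument. I do not expect any genuine obstacle at this stage: the substantial work — the uniform $H^1$-boundedness of the biorthogonal projector, proved by combining the local $L_2$-stability \eqref{12}, the Scott–Zhang-type comparison \eqref{11}, and Riesz–Thorin interpolation — has already been carried out in Theorem~\ref{theorem1}, so the corollary is purely a matter of bookkeeping within the operator-preconditioning framework of Section~\ref{Soper}.
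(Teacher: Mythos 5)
Your argument is exactly the paper's: the corollary is stated there as a direct consequence of Proposition~\ref{fortin} and Theorem~\ref{theorem1}, with the biorthogonal projector $P_\tria$ serving as the Fortin projector, and your verification of the hypotheses (including $\dim\W_\tria=\dim\V_\tria$ via \eqref{6} and the Fortin criterion via $\ran(\identity-P_\tria)=\V_\tria^{\perp_{L_2(\Omega)}}$) is the intended bookkeeping. No gaps; the proposal is correct and follows the same route.
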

  This} result is thus valid 
  \emph{without any additional assumptions on the mesh grading}.
  The latter is a consequence of the fact that we were able to equip $\V_\tria$ and $\W_\tria$ with local biorthogonal bases. 
  (Compare \cite[eq. (2.30)]{249.18} for conditions on the mesh grading without having local biorthogonal bases).
 Additionally,  the biorthogonality has the important advantage of the matrix 
$$
 \framebox{$\bm{D}_\tria=\langle \Xi_\tria,\Psi_\tria \rangle_{L_2(\Omega)}=\diag\{|T| \colon T \in \tria\}$}
$$
 being \emph{diagonal}. \medskip

\new{Before we discuss in \S\ref{SB} the construction of $B_\tria \in \cLis(\W_\tria,\W_\tria')$, being the last ingredient of our preconditioner, in the following subsection \S\ref{Smani} we revisit the construction of $\W_\tria$ and $\bm{D}_\tria$ in the manifold case.}

  \note{Remark removed}

 \subsection{Construction of $\W_\tria$ and $\bm{D}_\tria$ in the manifold case} \label{Smani}
 Let $\Gamma$ be a compact 
 %orientable \note{Denk niet dat ik dat nodig heb. Nodig voor integraal operatoren}
 $d$-dimensional Lipschitz, piecewise smooth manifold in $\R^{d'}$ \new{for some $d' \geq d$} with or without boundary $\partial\Gamma$.
 For some closed measurable $\gamma \subset \partial\Gamma$ and $s \in [0,1]$, let
 $$
\W:=[L_2(\Gamma),H^1_{0,\gamma}(\Gamma)]_{s,2},\quad \V:=\W'.
$$
 We assume that $\Gamma$ is given as the essentially disjoint union of $\cup_{\new{i}=1}^p \overline{\new{\chi}_i(\Omega_i)}$, with, for $1 \leq i \leq p$,
$\new{\chi}_i\colon \R^d \rightarrow \R^{d'}$ being some smooth regular parametrization, and $\Omega_i \subset \R^d$ an open polytope.
 W.l.o.g. assuming that for $i \neq j$, $\overline{\Omega}_i \cap \overline{\Omega}_j=\emptyset$, we define
 $$
 \new{\chi}\colon \Omega:=\cup_{i=1}^p \Omega_i \rightarrow \cup_{i=1}^p \new{\chi}_i(\Omega_i) \text{ by } \new{\chi}|_{\Omega_i}=\new{\chi}_i.
 $$

Let $\bbT$ be a family of conforming partitions $\tria$ of $\Gamma$ \new{into `panels'} such that, for $1 \leq i \leq p$,  $\new{\chi}^{-1}(\tria) \cap \Omega_i$ is a uniformly shape regular conforming partition of $\Omega_i$ into $d$-simplices (that for $d=1$ satisfies a uniform $K$-mesh property).
We assume that $\gamma$ is a (possibly empty) union of `faces' of $T \in \tria$ (i.e., sets of type $\new{\chi}_i(e)$, where $e$ is a $(d-1)$-dimensional face of $\new{\chi}_i^{-1}(T)$).

As in Sect.~\ref{Sdomain_case}, for $\tria \in \bbT$, we define $N^0_\tria$ as the set of vertices of $\tria$ that are not on $\gamma$, set
$d_{\tria,\nu}:=\#\{T \in \tria\colon \nu \in \overline{T}\}$, and for $T \in \tria$, define $h_T:=|T|^{1/d}$ and
$N^0_{\tria,T}:=N^0_\tria \cap N_T$, with $N_T$ being the set of the vertices of $T$.

  We set
 \begin{align*}
\V_\tria=\Ss^{-1,0}_\tria&:=\{u \in L_2(\Gamma)\colon u \circ \new{\chi} |_{\new{\chi}^{-1}(T)} \in \cP_0 \,\,(T \in \tria)\} \subset \V,\\
\Ss^{0,1}_{\tria,0}&:=\{u \in H^1_{0,\gamma}(\Gamma)\colon u \circ \new{\chi} |_{\new{\chi}^{-1}(T)} \in \cP_1 \,\,(T \in \tria)\},
 \end{align*}
equipped  with $\Xi_\tria=\{\xi_T\colon T \in \tria\}$ and  $\Phi_\tria=\{\phi_{\tria,\nu}\colon \nu \in N^0_\tria\}$, respectively, defined by
$ \xi_T := 1$ on $T$, $ \xi_T := 0$ elsewhere, and $\phi_{\tria,\nu}(\nu')=\delta_{\nu,\nu'}$ ($\nu,\nu'\in N^0_\tria$).
Furthermore, we define $\Sigma_\tria=\{\sigma_{\tria,T} \colon T \in \tria\} \subset \Ss^{0,1}_{\tria,0}$ and $\Theta_\tria=\{\theta_T \colon T \in \tria\} \subset H_{0,\gamma}^1(\Gamma)$
by $\sigma_{\tria,T}:=\sum_{\nu \in N^0_{\tria,T}} d_{\tria,\nu}^{-1} \phi_{\tria,\nu}$, $\theta_T:=\theta_{\new{\chi}^{-1}(T)} \circ \new{\chi}^{-1}$  on $T$ and $\theta_T := 0$ elsewhere.
 Thanks to our assumption of $\theta_{\new{\chi}^{-1}(T)} \geq 0$, it holds that $\langle \theta_{T},\xi_{T}\rangle_{L_2(\Gamma)} \eqsim \langle \theta_{\new{\chi}^{-1}(T)}, \xi_{\new{\chi}^{-1}(T)}\rangle_{L_2(\new{\chi}^{-1}(T))} \eqsim 
 \| \theta_{T}\|_{L_2(\Gamma)}\|\xi_{T}\|_{L_2(\Gamma)}$ (cf. \eqref{C1}).
 
 Now defining $\Psi_\tria:=\{\psi_{\tria,T} \colon T \in \tria\} $ and $\W_\tria:=\Span \Psi_\tria \subset \W$
 by
\be \label{expressionpsi}
\psi_{\tria,T} := \sigma_{\tria,T}+\frac{\langle \1-\sigma_{\tria,T},\xi_T\rangle_{L_2(\Gamma)}}{\langle \theta_T,\xi_T\rangle_{L_2(\Gamma)}} \theta_T-\sum_{T' \in \tria\setminus\{T\}} \frac{\langle \sigma_{\tria,T},\xi_{T'}\rangle_{L_2(\Gamma)}}{\langle \theta_{T'},\xi_{T'}\rangle_{L_2(\Gamma)}} \theta_{T'},
\ee
 and \new{$D_\tria\colon \V_\tria \rightarrow \W'_\tria$ by $(D_\tria v)(w):=(Dv)(w)=\langle v,w\rangle_{L_2(\Gamma)}$}, the analysis from Sect.~\ref{Sdomain_case} applies verbatim by only changing $\langle\,,\,\rangle_{L_2(\Omega)}$ into $\langle\,,\,\rangle_{L_2(\Gamma)}$. It yields that $\|D_\tria\|_{\cL(\V_\tria,\W_\tria')}\leq 1$, $\sup_{\tria \in \bbT}\|D^{-1}_\tria\|_{\cL(\W'_\tria,\V_\tria)} <\infty$, and $\bm{D}_\tria=\diag\{\langle \1,\xi_T\rangle_{L_2(\Gamma)}\colon T \in \tria\}$.
 
A \emph{hidden problem}, however, is that the computation of $\bm{D}_\tria$, and that of the scalar products \new{in \eqref{expressionpsi}}
 involve integrals over $\Gamma$ that generally have to be approximated using numerical quadrature. 
Recalling that, for $s>0$, the preconditioner $\bm{G}_\tria=\bm{D}_\tria^{-1} \bm{B}_\tria \bm{D}_\tria^{-\top}$ is \emph{not} a uniformly well-conditioned matrix, it is a priorily not clear which quadrature errors are allowable, in particular when $\tria$ is far from being quasi-uniform.
For this reason, in the following \S\ref{Smodified} we propose a slightly modified construction of $\W_\tria$ and $\bm{D}_\tria$ that does not require the evaluation of integrals over $\Gamma$.
\note{Phrase removed}

As a preparation, in the following lemma we present a non-standard inverse inequality on the family $(\V_\tria)_{\tria \in \bbT}$.
Proofs of this inequality for $d \leq 3$ can be found in \cite{56.5,75.64}.
It turns out that our construction of a `local' collection $\Psi_\tria \subset H^1_{0,\gamma}(\Omega)$ that is biorthogonal to $\Xi_\tria$ \note{Phrase removed}  allows for a very simple proof.

 \begin{lemma}[inverse inequality] \label{inverse}
With $h_\tria|_T:=h_T$, it holds that
 $$
 \|h_\tria v_\tria\|_{L_2(\Gamma)} \lesssim \|v_\tria\|_{H^1_{0,\gamma}(\Gamma)'} \quad (v_\tria \in \V_\tria).
 $$
 \end{lemma}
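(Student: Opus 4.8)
The plan is to prove the inverse inequality by a duality argument that exploits the biorthogonal, locally supported collection $\Psi_\tria$ just constructed. Since $\langle \Xi_\tria,\Psi_\tria\rangle_{L_2(\Gamma)}=\diag\{|T|\colon T\in\tria\}$ (up to the normalization \eqref{scale}, equivalently $\diag\{\langle\1,\xi_T\rangle_{L_2(\Gamma)}\}$), any $v_\tria=\sum_{T\in\tria}c_T\xi_T\in\V_\tria$ satisfies $c_T |T| = \langle v_\tria,\psi_{\tria,T}\rangle_{L_2(\Gamma)}$, so the coefficients of $v_\tria$ are directly recovered by testing against the $\psi_{\tria,T}$.

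First I would write, using $\|\xi_T\|_{L_2(\Gamma)}^2=|T|$ and $h_T=|T|^{1/d}$,
\[
\|h_\tria v_\tria\|_{L_2(\Gamma)}^2 = \sum_{T\in\tria} h_T^2 c_T^2 |T|
= \sum_{T\in\tria} h_T^2 \,\frac{\langle v_\tria,\psi_{\tria,T}\rangle_{L_2(\Gamma)}^2}{|T|}.
\]
Next I would estimate each term by Cauchy--Schwarz against a suitable test function. The natural candidate is $w:=\sum_{T\in\tria} h_T^2 c_T \psi_{\tria,T}\in H^1_{0,\gamma}(\Gamma)$, for which $\langle v_\tria,w\rangle_{L_2(\Gamma)}=\sum_T h_T^2 c_T^2|T| = \|h_\tria v_\tria\|_{L_2(\Gamma)}^2$, so that
\[
\|h_\tria v_\tria\|_{L_2(\Gamma)}^2 = \langle v_\tria,w\rangle_{L_2(\Gamma)} \le \|v_\tria\|_{H^1_{0,\gamma}(\Gamma)'}\,\|w\|_{H^1_{0,\gamma}(\Gamma)}.
\]
It then remains to show $\|w\|_{H^1_{0,\gamma}(\Gamma)}\lesssim \|h_\tria v_\tria\|_{L_2(\Gamma)}$, which, combined with the above and dividing through, yields the claim.

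The bound on $\|w\|_{H^1_{0,\gamma}(\Gamma)}$ is where the local support and the $H^1$-estimate \eqref{e6} on the $\psi_{\tria,T}$ enter: since $\supp\psi_{\tria,T}\subset\omega^{(1)}_\tria(T)$ and each point lies in a uniformly bounded number of such patches (finite overlap, by shape regularity and the $K$-mesh property), and $\|\psi_{\tria,T}\|_{H^1(\Gamma)}\lesssim h_T^{d/2-1}$, one gets $|w|_{H^1(\Gamma)}^2 \lesssim \sum_T h_T^4 c_T^2 \,h_T^{d-2} = \sum_T h_T^2 c_T^2\,h_T^d \eqsim \sum_T h_T^2 c_T^2 |T| = \|h_\tria v_\tria\|_{L_2(\Gamma)}^2$; the $L_2$-part of the $H^1$-norm is handled identically (indeed more easily) using $\|\psi_{\tria,T}\|_{L_2(\Gamma)}\lesssim h_T^{d/2}$, and absorbing the extra factor $h_T^2\le\diam(\Gamma)^2$. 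In the manifold case all the estimates \eqref{e1}--\eqref{e6} hold with $L_2(\Omega)$ replaced by $L_2(\Gamma)$ as remarked, and the finite overlap is inherited from the pulled-back partitions $\chi^{-1}(\tria)\cap\Omega_i$. The main obstacle — really the only nonroutine point — is choosing the right test function $w$; once $w=\sum_T h_T^2 c_T\psi_{\tria,T}$ is written down, everything reduces to the already-established local bounds on $\Psi_\tria$ plus a finite-overlap counting argument, with no mesh-grading hypothesis needed.
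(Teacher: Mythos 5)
Your proof is correct and is essentially the paper's own argument: your test function $w=\sum_{T\in\tria}h_T^2c_T\psi_{\tria,T}$ is exactly $P_\tria(h_\tria^2 v_\tria)$, and the duality step together with the bound $\|w\|_{H^1(\Gamma)}\lesssim\|h_\tria v_\tria\|_{L_2(\Gamma)}$ (which the paper obtains by citing the local estimate \eqref{10} for $P_\tria$, while you rederive it directly from \eqref{e5}, \eqref{e6} and finite overlap) reproduces the paper's proof.
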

 
 \begin{proof} 
For $P_\tria:L_2(\new{\Gamma}) \rightarrow H^1_{0,\gamma}(\Gamma)$ defined by
 $$
 P_\tria u=\sum_{T \in \tria} \frac{\langle u,\xi_T \rangle_{L_2(\Gamma)}}{\langle \1,\xi_T\rangle_{L_2(\Gamma)}} \psi_{\tria,T}.
 $$
 we have \note{Phrase removed} 
 $\ran (\identity -P_\tria)=\V_\tria^{\perp_{L_2(\Gamma)}}$, and as follows from \eqref{10},
 $$
\| P_\tria u\|_{H^1(\Gamma)} \lesssim \|h_\tria^{-1} u\|_{L_2(\Gamma)} \quad(u \in L_2(\Gamma)).
$$
 The proof is completed by 
 \begin{align*}
 \|v_\tria\|_{H^1_{0,\gamma}(\Gamma)'}\!=\!\!\!\!\!\!\!\sup_{0 \neq w \in H^1_{0,\gamma}(\Gamma)}\!\!\!\! \frac{\langle v_\tria,w\rangle_{L_2(\Gamma)}}{\|w\|_{H^1(\Gamma)}}
\! \geq  \!\frac{\langle v_\tria,P_\tria h_\tria^2 v_\tria\rangle_{L_2(\Gamma)}}{\|P_\tria h_\tria^2 v_\tria\|_{H^1(\Gamma)}}
 \!\gtrsim \!\frac{\langle h_\tria v_\tria,h_\tria v_\tria\rangle_{L_2(\Gamma)}}{\|h_\tria v_\tria\|_{L_2(\Gamma)}}.\,\, \,\,\,\qedhere
 \end{align*}
 \end{proof}

\subsubsection{Modified construction for manifolds} \label{Smodified} \note{Phrase removed} Given $\tria \in \bbT$, on $L_2(\Gamma)$ we define an additional, `mesh-dependent' scalar product
 $$
 \langle u,v\rangle_{\tria}:=\sum_{T \in \tria} \frac{|T|}{|\new{\chi}^{-1}(T)|} \int_{\new{\chi}^{-1}(T)} u(\new{\chi}(x))v(\new{\chi}(x))dx.
 $$
 It is constructed from 
 $$
 \langle u,v\rangle_{L_2(\Gamma)}=\int_{\Omega} u(\new{\chi}(x))v(\new{\chi}(x)) |\partial \new{\chi}(x)|dx
 $$
 by replacing on each $\new{\chi}^{-1}(T)$, the Jacobian $|\partial \new{\chi}|$ by its average $\frac{|T|}{|\new{\chi}^{-1}(T)|}$ over $\new{\chi}^{-1}(T)$.%
 \footnote{It will be clear from the following that $\frac{|T|}{|\new{\chi}^{-1}(T)|}$ can be read as \emph{any} constant approximation to $|\partial \new{\chi}|$ on $L_\infty(\new{\chi}^{-1}(T))$-distance $\lesssim h_{\new{\chi}^{-1}(T)}$, for example $|\partial \new{\chi}(z)|$ \new{for} some $z \in\new{\chi}^{-1}(T)$.
 Then in the following, the volumes $|T|$ in the expression for $\bm{D}_\tria$ should be read as $|\new{\chi}^{-1}(T)| |\partial \new{\chi}(z)|$, with which also the computation of $|T|$ is avoided.}

 We now \emph{redefine} $\Psi_\tria:=\{\psi_{\tria,T} \colon T \in \tria\}$, $\W_\tria:=\Span \Psi_\tria \subset \W$
 by
$$
\psi_{\tria,T} := \sigma_{\tria,T}+\frac{\langle \1-\sigma_{\tria,T},\xi_T\rangle_{\tria}}{\langle \theta_T,\xi_T\rangle_{\tria}} \theta_T-\sum_{T' \in \tria\setminus\{T\}} \frac{\langle \sigma_{\tria,T},\xi_{T'}\rangle_{\tria}}{\langle \theta_{T'},\xi_{T'}\rangle_{\tria}} \theta_{T'},
$$
 and $D_\tria\colon\V_\tria \rightarrow \W'_\tria$ by $(D_\tria v_\tria)(w_\tria):=\langle v_\tria,w_\tria\rangle_\tria$.
Then, as in the domain case, \new{we get the explicit formulas}
$$
\bm{D}_\tria=\langle \Xi_\tria,\Psi_\tria\rangle_\tria =\diag\{\langle \1,\xi_T\rangle_{\tria}\colon T \in \tria\}=\diag\{|T|\colon T \in \tria\},
$$
and
\be \label{psi2}
 \psi_{\tria,T} = \sum_{\nu \in N^0_{\tria,T}} \hspace*{-0.4em}d_{\tria,\nu}^{-1} \phi_{\tria,\nu}+
\big( 1-{\textstyle \frac{1}{d+1}}\hspace*{-0.8em}\sum_{\nu \in N^0_{\tria,T}} d_{\tria,\nu}^{-1}\big) \theta_T
 - \hspace*{-0.4em}\sum_{T' \in \tria\setminus\{T\}} \hspace*{-0.6em}\big({\textstyle \frac{1}{d+1}}\hspace*{-1.3em}\sum_{\nu \in N^0_{\tria,T} \cap N^0_{\tria,T'}} \hspace*{-0.8em}d_{\tria,\nu}^{-1}\big) \theta_{T'}.
\ee
thus with coefficients that are \emph{independent} of $\new{\chi}$.

What remains is to prove the uniform boundedness of $\|D_\tria\|_{\cL(\V_\tria,\W_\tria')}$, and that of $\|D^{-1}_\tria\|_{\cL(\W'_\tria,\V_\tria)}$.
Because of the definition of $D_\tria$ in terms of the mesh-dependent scalar product, for doing so we cannot simply rely on the \new{`Fortin criterion' from} Proposition~\ref{fortin}.

\begin{lemma} \label{lemmie}
    It holds that $\sup_{\tria \in \bbT}\|D_\tria\|_{\cL(\V_\tria,\W_\tria')}<\infty$.
\end{lemma}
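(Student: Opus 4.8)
The plan is to bound $\|D_\tria\|_{\cL(\V_\tria,\W_\tria')}$ by working directly with the mesh-dependent scalar product and exploiting that it is close to the true $L_2(\Gamma)$-inner product on each panel. First I would observe that, by definition, $\|D_\tria\|_{\cL(\V_\tria,\W_\tria')} = \sup_{0\neq v_\tria\in\V_\tria}\sup_{0\neq w_\tria\in\W_\tria} \frac{\langle v_\tria,w_\tria\rangle_\tria}{\|v_\tria\|_{\W'}\|w_\tria\|_{\W}}$. Since $\W=[L_2(\Gamma),H^1_{0,\gamma}(\Gamma)]_{s,2}$ and $\V=\W'$, by interpolation it suffices to control the bilinear form $\langle\cdot,\cdot\rangle_\tria$ both as a map $L_2(\Gamma)\times L_2(\Gamma)\to\R$ and, after composing with a suitable projector, as a map $H^1_{0,\gamma}(\Gamma)'\times H^1_{0,\gamma}(\Gamma)\to\R$; the Riesz–Thorin/interpolation argument then gives the $\W'\times\W$ bound. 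The first of these is immediate: on each $\new{\chi}^{-1}(T)$ the weight $\frac{|T|}{|\new{\chi}^{-1}(T)|}$ is comparable to $|\partial\new{\chi}|$ uniformly (by smoothness of the parametrizations and the assumption $d' \ge d$), so $\langle u,v\rangle_\tria \eqsim \langle u,v\rangle_{L_2(\Gamma)}$ with constants depending only on $\new{\chi}$, whence $|\langle u,v\rangle_\tria|\lesssim \|u\|_{L_2(\Gamma)}\|v\|_{L_2(\Gamma)}$.

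Next I would handle the $L_2$-times-$H^1$ estimate. For $v_\tria\in\V_\tria$ and $w\in H^1_{0,\gamma}(\Gamma)$, I want to show $|\langle v_\tria, w\rangle_\tria| \lesssim \|v_\tria\|_{H^1_{0,\gamma}(\Gamma)'}\|w\|_{H^1(\Gamma)}$. The key is that $\langle v_\tria, w\rangle_\tria$ differs from $\langle v_\tria, w\rangle_{L_2(\Gamma)}$ by a sum over $T$ of element contributions, and on each element the weight discrepancy $|\partial\new{\chi}(x)| - \frac{|T|}{|\new{\chi}^{-1}(T)|}$ has $L_\infty(\new{\chi}^{-1}(T))$-size $\lesssim h_{\new{\chi}^{-1}(T)} \eqsim h_T$, since $|\partial\new{\chi}|$ is Lipschitz on each $\Omega_i$ and a constant function matching its average is within the oscillation $\lesssim h_T$. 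Because $v_\tria$ is piecewise constant, on each $T$ we may subtract the mean of $w$ over $\new{\chi}^{-1}(T)$ (weighted appropriately) without changing either integral's leading behaviour, and a Poincaré estimate gives $\|w - \bar w_T\|_{L_2(T)}\lesssim h_T |w|_{H^1(T)}$. Combining, the error term is bounded by $\sum_T h_T\cdot h_T^{-1}\cdot\dots$—more precisely, each element error is $\lesssim \|v_\tria\|_{L_2(T)}\, h_T\, h_T^{-1}\|w\|_{\dots}$... the cleaner route is to write $\langle v_\tria,w\rangle_\tria = \langle h_\tria^{-1}\cdot(h_\tria v_\tria), w\rangle_\tria$ and invoke Lemma~\ref{inverse}: since $\|h_\tria v_\tria\|_{L_2(\Gamma)}\lesssim \|v_\tria\|_{H^1_{0,\gamma}(\Gamma)'}$, and $\langle\cdot,\cdot\rangle_\tria\eqsim\langle\cdot,\cdot\rangle_{L_2(\Gamma)}$, one gets $|\langle v_\tria, w\rangle_\tria| \lesssim \|h_\tria v_\tria\|_{L_2(\Gamma)}\|h_\tria^{-1}w\|_{L_2(\Gamma)}$, which is not directly $\|w\|_{H^1}$; hence I would instead pass through the projector $P_\tria$: test against $P_\tria(h_\tria^2 v_\tria)$ as in the proof of Lemma~\ref{inverse}, or note $\langle v_\tria, w\rangle_\tria = \langle v_\tria, P_\tria' w\rangle_\tria$ for a suitable biorthogonal-type projection, reducing $w$ to its local averages against $\Xi_\tria$, on which the scalar-product perturbation is harmless.

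The main obstacle I anticipate is precisely making the $H^1$-side estimate clean without a Fortin-type projector available for $\langle\cdot,\cdot\rangle_\tria$: one must show that the mesh-dependent form, when paired with the piecewise-constant space, sees only the local means of $w$, and that replacing $w$ by (a projection onto) those means costs at most $h_T|w|_{H^1(\omega_\tria(T))}$ per element, which then telescopes against the inverse inequality $\|h_\tria v_\tria\|_{L_2(\Gamma)}\lesssim\|v_\tria\|_{\W'}$ (for $s=1$) or its interpolated version. Concretely I would fix a Scott–Zhang-type operator $Q_\tria$ onto $\V_\tria$ (or just the $L_2$-orthogonal projection onto piecewise constants) with $\|w-Q_\tria w\|_{L_2(T)}\lesssim h_T|w|_{H^1(\omega_\tria(T))}$, write $\langle v_\tria, w\rangle_\tria = \langle v_\tria, Q_\tria w\rangle_\tria + \langle v_\tria, w - Q_\tria w\rangle_\tria$; the first term equals $\langle v_\tria, Q_\tria w\rangle_{L_2(\Gamma)}$ up to $\eqsim$ constants and is bounded by $\|v_\tria\|_{\W'}\|Q_\tria w\|_{\W}\lesssim\|v_\tria\|_{\W'}\|w\|_{\W}$, while the second is $\lesssim \|h_\tria v_\tria\|_{L_2}\|h_\tria^{-1}(w-Q_\tria w)\|_{L_2}\lesssim \|v_\tria\|_{\W'}|w|_{H^1}$ by Lemma~\ref{inverse} (for $s=1$), and the general $s\in[0,1]$ case follows by interpolation between this and the pure $L_2$ bound. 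Finally, assembling the two endpoint estimates via the interpolation-space definition of $\W$ and $\W'=\W^*$ yields $\sup_{\tria\in\bbT}\|D_\tria\|_{\cL(\V_\tria,\W_\tria')}<\infty$.
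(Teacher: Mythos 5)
Your overall route is the paper's: compare $\langle\cdot,\cdot\rangle_\tria$ with $\langle\cdot,\cdot\rangle_{L_2(\Gamma)}$ using the $O(h_T)$ elementwise weight discrepancy, invoke the inverse inequality of Lemma~\ref{inverse} at the endpoint $s=1$, and interpolate against the trivial $s=0$ bound. However, the concrete version of your $s=1$ argument contains a step that fails. You split $\langle v_\tria,w\rangle_\tria=\langle v_\tria,Q_\tria w\rangle_\tria+\langle v_\tria,w-Q_\tria w\rangle_\tria$ with $Q_\tria$ a projector onto the piecewise constants $\V_\tria$, and bound the first term by $\|v_\tria\|_{\W'}\|Q_\tria w\|_{\W}\lesssim\|v_\tria\|_{\W'}\|w\|_{\W}$. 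At the endpoint you actually need, $s=1$, one has $\W=H^1_{0,\gamma}(\Gamma)$, and $Q_\tria w$ is a discontinuous piecewise constant, hence $Q_\tria w\notin\W$: the quantity $\|Q_\tria w\|_{\W}$ is not finite and there is no stability $\|Q_\tria w\|_{\W}\lesssim\|w\|_{\W}$, so the duality bound on the first term is vacuous. (It is true, incidentally, that $\langle v_\tria,Q_\tria w\rangle_\tria=\langle v_\tria,Q_\tria w\rangle_{L_2(\Gamma)}$ exactly when both arguments are elementwise constant, but that does not rescue the step. Also, your repeated claim $\langle u,v\rangle_\tria\eqsim\langle u,v\rangle_{L_2(\Gamma)}$ cannot hold for the signed quantities themselves; what holds, and what suffices at $s=0$, is $|\langle u,v\rangle_\tria|\lesssim\|u\|_{L_2(\Gamma)}\|v\|_{L_2(\Gamma)}$ together with equivalence of the induced norms.)

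The repair is simply not to project $w$ at all, which is the paper's proof. The weight discrepancy $\big||\partial\chi|-\tfrac{|T|}{|\chi^{-1}(T)|}\big|\lesssim h_T$ on each $\chi^{-1}(T)$ already yields \eqref{closeness}, $|\langle v,u\rangle_\tria-\langle v,u\rangle_{L_2(\Gamma)}|\lesssim\|h_\tria v\|_{L_2(\Gamma)}\|u\|_{L_2(\Gamma)}$, with no Poincar\'e or mean-subtraction argument needed. Then for $s=1$ write $(D_\tria v_\tria)(w_\tria)=\langle v_\tria,w_\tria\rangle_{L_2(\Gamma)}+\big(\langle v_\tria,w_\tria\rangle_\tria-\langle v_\tria,w_\tria\rangle_{L_2(\Gamma)}\big)$: the first term is at most $\|v_\tria\|_{H^1_{0,\gamma}(\Gamma)'}\|w_\tria\|_{H^1(\Gamma)}$ by duality, and the second is $\lesssim\|h_\tria v_\tria\|_{L_2(\Gamma)}\|w_\tria\|_{L_2(\Gamma)}\lesssim\|v_\tria\|_{H^1_{0,\gamma}(\Gamma)'}\|w_\tria\|_{H^1(\Gamma)}$ by \eqref{closeness} and Lemma~\ref{inverse}; interpolation with the $s=0$ case finishes the proof. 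Your intermediate bound $|\langle v_\tria,w\rangle_\tria|\lesssim\|h_\tria v_\tria\|_{L_2(\Gamma)}\|h_\tria^{-1}w\|_{L_2(\Gamma)}$, and the ensuing detour through $P_\tria$, $\check P_\tria$ or $Q_\tria$, are symptoms of trying to bound the whole pairing by the perturbation estimate instead of only the difference; once you bound only the difference, the projector machinery is unnecessary and your argument coincides with the paper's.
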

\begin{proof}
\new{If} $s=0$, i.e., when $\W=L_2(\Gamma) \simeq L_2(\Gamma)'=\V$, \new{then} the uniform boundedness of $\|D_\tria\|_{\cL(\V_\tria,\W'_\tria)}$ follows directly from $\langle\cdot,\cdot\rangle_\tria \eqsim \|\cdot\|_{L_2(\Gamma)}^2$.

 By an interpolation argument, in the following it suffices to consider the case $s=1$, i.e., $\W=H^1_{0,\gamma}(\Gamma)$ and $\V=H^1_{0,\gamma}(\Gamma)'$. 
By definition of $\langle \,,\,\rangle_{\tria}$, it holds that
 \be \label{closeness}
|\langle v,u\rangle_\tria-\langle v,u\rangle_{L_2(\Gamma)}| \lesssim \|h_\tria v\|_{L_2(\Gamma)} \|u\|_{L_2(\Gamma)} \quad (v,u \in L_2(\Gamma)).
\ee
  By writing $(D_\tria v_\tria)(w_\tria)=\langle v_\tria,w_\tria \rangle_{L_2(\Gamma)}+\langle v_\tria,w_\tria\rangle_\tria-\langle v_\tria,w_\tria\rangle_{L_2(\Gamma)}$, the uniform boundedness of  $\|D_\tria\|_{\cL(\V_\tria,\W'_\tria)}$ (for $s=1$) now follows by combining \eqref{closeness} and Lemma~\ref{inverse}.
\end{proof}

The $\langle\,,\,\rangle_\tria$-biorthogonal projector $\check{P}_\tria:L_2(\Omega) \rightarrow H^1_{0,\gamma}(\Omega)$ with $\ran \check{P}_\tria=\W_\tria$ and $\ran (\identity -\check{P}_\tria)=\V_\tria^{\perp_{\langle\,,\,\rangle_\tria}}$ exists and  is given by
 $\check{P}_\tria u=\sum_{T \in \tria} |T|^{-1} \langle u,\xi_T \rangle_{\tria} \psi_{\tria,T}$.
 Since $\langle \,,\,\rangle_{\tria}$ gives rise to a norm that is uniformly equivalent to $\|\,\|_{L_2(\Gamma)}$, the proof of Theorem~\ref{theorem1} again applies, and shows that
 $$
 \sup_{\tria \in \bbT}\|\check{P}_\tria\|_{\cL(L_2(\Gamma),L_2(\Gamma))}<\infty, \quad\sup_{\tria \in \bbT}\|\check{P}_\tria\|_{\cL(H^1_{0,\gamma}(\Gamma),H^1_{0,\gamma}(\Gamma))}<\infty,
 $$
  as well as 
\be \label{25}
\|\check{P}_\tria u\|_{H^1(\Gamma)} \lesssim \|h_\tria^{-1} u\|_{L_2(\Gamma)} \quad(u \in L_2(\Gamma)).
\ee

\new{These properties of $\check{P}_\tria$ will be the key to prove the uniform boundedness of $\|D^{-1}_\tria\|_{\cL(\W'_\tria,\V_\tria)}$.} Indeed, \new{for $s=0$} uniform boundedness of $\|D^{-1}_\tria\|_{\cL(\V_\tria,\W'_\tria)}$  follows from
$$
(D_\tria v_\tria)(\check{P}_\tria v_\tria)=\langle v_\tria,v_\tria\rangle_\tria \eqsim \|v_\tria\|_{L_2(\Gamma)}^2 \gtrsim \|v_\tria\|_{L_2(\Gamma)} \|\check{P}_\tria v_\tria\|_{L_2(\Gamma)}.
$$
To conclude, by an interpolation argument, uniform boundedness of $\|D^{-1}_\tria\|_{\cL(\V_\tria,\W'_\tria)}$ for any $s \in [0,1]$, it is sufficient to verify the case $s=1$, which \new{will} be done \new{using} the following modified inverse inequality.
 
 \begin{lemma} \label{lem4} It holds that
 $$
 \|h_\tria v_\tria\|_{L_2(\Gamma)} \lesssim \sup_{0 \neq w \in H^1_{0,\gamma}(\Gamma)} \frac{\langle v_\tria,w\rangle_\tria}{\|w\|_{H^1(\Gamma)}} \quad (v_\tria \in \V_\tria).
 $$
 \end{lemma}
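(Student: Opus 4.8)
\textbf{Proof proposal for Lemma~\ref{lem4}.}
The plan is to mimic the proof of Lemma~\ref{inverse}, replacing the $L_2(\Gamma)$-biorthogonal projector $P_\tria$ by the $\langle\,,\,\rangle_\tria$-biorthogonal projector $\check P_\tria$ and exploiting the estimate \eqref{25}. Concretely, given $v_\tria \in \V_\tria$ (the case $v_\tria=0$ being trivial), I would test the supremum on the right-hand side with the specific choice $w:=\check P_\tria(h_\tria^2 v_\tria) \in \W_\tria \subset H^1_{0,\gamma}(\Gamma)$.

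The key observations making this work are the following. First, since $h_\tria$ is piecewise constant on $\tria$ and $v_\tria \in \V_\tria = \Ss^{-1,0}_\tria$, the function $h_\tria^2 v_\tria$ again lies in $\V_\tria$. Second, by the defining property of $\check P_\tria$, namely $\ran(\identity-\check P_\tria)=\V_\tria^{\perp_{\langle\,,\,\rangle_\tria}}$, one has $\langle v_\tria,(\identity-\check P_\tria)(h_\tria^2 v_\tria)\rangle_\tria=0$, hence
$$
\langle v_\tria,\check P_\tria(h_\tria^2 v_\tria)\rangle_\tria=\langle v_\tria,h_\tria^2 v_\tria\rangle_\tria \eqsim \|h_\tria v_\tria\|_{L_2(\Gamma)}^2,
$$
using that $\langle\,,\,\rangle_\tria$ induces a norm uniformly equivalent to $\|\,\|_{L_2(\Gamma)}$; in particular $w=\check P_\tria(h_\tria^2 v_\tria)\neq 0$ whenever $v_\tria\neq 0$. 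Third, \eqref{25} applied to $u=h_\tria^2 v_\tria$ gives
$$
\|\check P_\tria(h_\tria^2 v_\tria)\|_{H^1(\Gamma)} \lesssim \|h_\tria^{-1}(h_\tria^2 v_\tria)\|_{L_2(\Gamma)} = \|h_\tria v_\tria\|_{L_2(\Gamma)}.
$$
Combining these three facts,
$$
\sup_{0 \neq w \in H^1_{0,\gamma}(\Gamma)} \frac{\langle v_\tria,w\rangle_\tria}{\|w\|_{H^1(\Gamma)}} \geq \frac{\langle v_\tria,\check P_\tria(h_\tria^2 v_\tria)\rangle_\tria}{\|\check P_\tria(h_\tria^2 v_\tria)\|_{H^1(\Gamma)}} \gtrsim \frac{\|h_\tria v_\tria\|_{L_2(\Gamma)}^2}{\|h_\tria v_\tria\|_{L_2(\Gamma)}} = \|h_\tria v_\tria\|_{L_2(\Gamma)},
$$
which is the claim.

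I do not expect a serious obstacle here: the argument is essentially a verbatim transcription of the proof of Lemma~\ref{inverse}, with the only genuinely new ingredient being the $H^1$-stability of $\check P_\tria$ recorded in \eqref{25} (which itself was obtained by rerunning the proof of Theorem~\ref{theorem1} for the mesh-dependent scalar product, using that $\langle\,,\,\rangle_\tria \eqsim \|\,\|_{L_2(\Gamma)}^2$). The only points deserving a word of care are that $h_\tria^2 v_\tria$ stays in the discrete space $\V_\tria$ (so that the orthogonality relation defining $\check P_\tria$ applies), and the nonvanishing of the chosen test function, both of which are immediate. In this sense the substantive work has already been done, and Lemma~\ref{lem4} is the last step needed to conclude, via an interpolation argument, the uniform boundedness of $\|D_\tria^{-1}\|_{\cL(\W_\tria',\V_\tria)}$ for all $s\in[0,1]$ in the modified manifold construction.
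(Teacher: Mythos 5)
Your proposal is correct and is essentially the paper's own proof: it tests the supremum with $\check P_\tria(h_\tria^2 v_\tria)$, uses the $\langle\,,\,\rangle_\tria$-orthogonality of $\ran(\identity-\check P_\tria)$ to $\V_\tria$ together with $\langle\,,\,\rangle_\tria\eqsim\|\cdot\|_{L_2(\Gamma)}^2$, and invokes \eqref{25}, exactly as in the paper (your remark that $h_\tria^2 v_\tria\in\V_\tria$ is true but not even needed, since the orthogonality only requires $v_\tria\in\V_\tria$).
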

 
 \begin{proof} Similar to proof of Lemma~\ref{inverse}, using \eqref{25} for $v_\tria \in \V_\tria$ we estimate
 \begin{align*}
\sup_{0 \neq w \in H^1_{0,\gamma}(\Gamma)}
 \frac{\langle v_\tria,w\rangle_{\tria}}{\|w\|_{H^1(\Gamma)}}
\! \geq  \!\frac{\langle v_\tria,\check{P}_\tria h_\tria^2 v_\tria\rangle_{\tria}}{\|\check{P}_\tria h_\tria^2 v_\tria\|_{H^1(\Gamma)}}
 \!\gtrsim \!\frac{\langle h_\tria v_\tria,h_\tria v_\tria\rangle_{\tria}}{\|h_\tria v_\tria\|_{L_2(\Gamma)}} \eqsim \|h_\tria v_\tria\|_{L_2(\Gamma)}.\hspace*{1em}\qedhere
 \end{align*}
 \end{proof}

 \begin{corollary} \label{corol1} It holds that
 $$
 \|v_\tria\|_{H^1_{0,\gamma}(\Gamma)'} \eqsim \sup_{0 \neq w_\tria \in \W_\tria} \frac{\langle v_\tria,w_\tria \rangle_{\tria}}{\|w_\tria\|_{H^1(\Gamma)}}  \quad (v_\tria \in \V_\tria),
$$
{\rm(} with `$\lesssim$' being the statement  $\sup_{\tria \in \bbT} \|D^{-1}_\tria\|_{\cL(\W'_\tria,\V_\tria)}<\infty$ for $s=1${\rm)}.
\end{corollary}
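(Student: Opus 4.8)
The plan is to prove the two-sided estimate by treating each direction separately, and then to identify the `$\lesssim$' direction with the claimed operator-norm bound.

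First I would dispose of the easy inequality `$\gtrsim$'. For any $v_\tria \in \V_\tria$ and any $w_\tria \in \W_\tria$, the definition $(D_\tria v_\tria)(w_\tria) = \langle v_\tria, w_\tria\rangle_\tria$ together with the uniform boundedness $\sup_{\tria\in\bbT}\|D_\tria\|_{\cL(\V_\tria,\W'_\tria)} < \infty$ from Lemma~\ref{lemmie} immediately gives
$$
\sup_{0\neq w_\tria \in \W_\tria} \frac{\langle v_\tria, w_\tria\rangle_\tria}{\|w_\tria\|_{H^1(\Gamma)}} \lesssim \|v_\tria\|_{H^1_{0,\gamma}(\Gamma)'},
$$
since $\W_\tria \subset H^1_{0,\gamma}(\Gamma)$ so the supremum over $\W_\tria$ is bounded by the norm of $D_\tria$ (equivalently: extend $w_\tria$ to a test function and use that $\langle v_\tria,\cdot\rangle_\tria$ is close to $\langle v_\tria,\cdot\rangle_{L_2(\Gamma)}$ in the sense of \eqref{closeness}, then use that $v_\tria$ has $H^1$-dual norm controlling the $L_2(\Gamma)$ pairing together with the inverse inequality of Lemma~\ref{inverse} on the defect term). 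This is the direction that is \emph{not} the assertion about $D_\tria^{-1}$.

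The substance is the `$\lesssim$' direction, i.e. that the supremum on the right bounds $\|v_\tria\|_{H^1_{0,\gamma}(\Gamma)'}$ from above up to a uniform constant; by Remark~\ref{closedrange} this is exactly the statement $\sup_{\tria\in\bbT}\|D_\tria^{-1}\|_{\cL(\W'_\tria,\V_\tria)} < \infty$ for $s=1$. Here I would feed $\check P_\tria$ into the role of a Fortin-type projector: for $0\neq w \in H^1_{0,\gamma}(\Gamma)$, since $\ran(\identity - \check P_\tria) = \V_\tria^{\perp_{\langle\,,\,\rangle_\tria}}$, one has $\langle v_\tria, w\rangle_\tria = \langle v_\tria, \check P_\tria w\rangle_\tria$, and $\check P_\tria w \in \W_\tria$ with $\|\check P_\tria w\|_{H^1(\Gamma)} \lesssim \|w\|_{H^1(\Gamma)}$ by the uniform $H^1$-boundedness of $\check P_\tria$ established just before the corollary. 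Hence
$$
\frac{\langle v_\tria, w\rangle_\tria}{\|w\|_{H^1(\Gamma)}} \lesssim \frac{\langle v_\tria, \check P_\tria w\rangle_\tria}{\|\check P_\tria w\|_{H^1(\Gamma)}} \le \sup_{0\neq w_\tria\in\W_\tria}\frac{\langle v_\tria,w_\tria\rangle_\tria}{\|w_\tria\|_{H^1(\Gamma)}},
$$
and taking the supremum over $w \in H^1_{0,\gamma}(\Gamma)$ on the left. To finish I must convert the left-hand supremum over $w$ with the mesh-dependent pairing into the genuine $H^1_{0,\gamma}(\Gamma)'$-norm of $v_\tria$; this is precisely the content of Lemma~\ref{lem4}, which says $\|h_\tria v_\tria\|_{L_2(\Gamma)} \lesssim \sup_w \langle v_\tria,w\rangle_\tria/\|w\|_{H^1(\Gamma)}$, combined once more with \eqref{closeness} to pass from $\langle\,,\,\rangle_\tria$ to $\langle\,,\,\rangle_{L_2(\Gamma)}$: writing $\langle v_\tria,w\rangle_{L_2(\Gamma)} = \langle v_\tria,w\rangle_\tria + (\langle v_\tria,w\rangle_{L_2(\Gamma)} - \langle v_\tria,w\rangle_\tria)$ and bounding the defect by $\|h_\tria v_\tria\|_{L_2(\Gamma)}\|w\|_{L_2(\Gamma)}$, one gets $\|v_\tria\|_{H^1_{0,\gamma}(\Gamma)'} \lesssim \sup_w \langle v_\tria,w\rangle_\tria/\|w\|_{H^1(\Gamma)} + \|h_\tria v_\tria\|_{L_2(\Gamma)}$, and the second term is absorbed by Lemma~\ref{lem4}. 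Chaining these gives the claimed upper bound.

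The main obstacle is keeping the two nonstandard pairings straight: every step juggles $\langle\,,\,\rangle_\tria$ against $\langle\,,\,\rangle_{L_2(\Gamma)}$, and the passage between them is only controlled up to the defect term in \eqref{closeness}, which carries a factor $\|h_\tria v_\tria\|_{L_2(\Gamma)}$. That defect must be re-absorbed using the inverse inequalities (Lemma~\ref{inverse} on the easy side, Lemma~\ref{lem4} on the hard side), so the delicate bookkeeping is making sure the absorbed term really is dominated and no circularity creeps in — in particular that Lemma~\ref{lem4} is applied only to the $\langle\,,\,\rangle_\tria$-supremum it is stated for, and that the $H^1$-boundedness and the estimate \eqref{25} for $\check P_\tria$ are invoked with the correct norms. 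Everything else is the standard Fortin-projector argument of Proposition~\ref{fortin}, merely re-run with $\check P_\tria$ in place of $P_\tria$ and with $\langle\,,\,\rangle_\tria$ in place of the $L_2$-inner product.
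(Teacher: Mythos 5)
Your proposal is correct and follows essentially the same route as the paper: the `$\gtrsim$' direction via Lemma~\ref{lemmie}, and the `$\lesssim$' direction by combining \eqref{closeness} with Lemma~\ref{lem4} to pass from the $H^1_{0,\gamma}(\Gamma)'$-norm to the $\langle\,,\,\rangle_\tria$-supremum over all of $H^1_{0,\gamma}(\Gamma)$, and then restricting to $\W_\tria$ using $\langle v_\tria,w\rangle_\tria=\langle v_\tria,\check{P}_\tria w\rangle_\tria$ together with the uniform $H^1$-boundedness of $\check{P}_\tria$. You merely present these steps in a slightly different order than the paper, which changes nothing of substance.
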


\begin{proof}
The inequality `$\gtrsim$' is the statement of Lemma~\ref{lemmie} for $s=1$.

To prove the other direction, for $v \in L_2(\Gamma)$, \eqref{closeness} shows that
 $$
 \new{\Big|}\|v\|_{H^1_{0,\gamma}(\Gamma)'}-\sup_{0 \neq w \in H^1_{0,\gamma}(\Gamma)} \frac{\langle v,w\rangle_{\tria}}{\|w\|_{H^1(\Gamma)}}
 \new{\Big|} \lesssim \|h_\tria v\|_{L_2(\Gamma)},
  $$
  Taking $v=v_\tria \in \V_\tria$, from Lemma~\ref{lem4} we conclude that
   \begin{align*}
 \|v_\tria\|_{H^1_{0,\gamma}(\Gamma)'} & \lesssim \sup_{0 \neq w \in H^1_{0,\gamma}(\Gamma)} \frac{\langle v_\tria,w\rangle_{\tria}}{\|w\|_{H^1(\Gamma)}}=
\sup_{0 \neq w \in H^1_{0,\gamma}(\Gamma)}  \frac{\langle v_\tria,\check{P}_\tria w\rangle_{\tria}}{\|w\|_{H^1(\Gamma)}} \\
 & \hspace*{-3em}\leq \|\check{P}_\tria\|_{\cL(H^1_{0,\gamma}(\Gamma),H^1_{0,\gamma}(\Gamma))} \sup_{0 \neq w_\tria \in \W_\tria} \frac{\langle v_\tria,w_\tria \rangle_{\tria}}{\|w_\tria\|_{H^1(\Gamma)}}
\lesssim \sup_{0 \neq w_\tria \in \W_\tria} \frac{\langle v_\tria,w_\tria \rangle_{\tria}}{\|w_\tria\|_{H^1(\Gamma)}}
  \end{align*}
  by $\sup_{\tria \in \bbT}\|\check{P}_\tria\|_{\cL(H^1_{0,\gamma}(\Gamma),H^1_{0,\gamma}(\Gamma))}<\infty$.
\end{proof}
 
 \subsection{Construction of $B_\tria \in \cLis(\W_\tria, \W'_\tria)$.} \label{SB}
Having established, in both domain and manifold case, $\sup_{T \in \bbT} \max\big(\|D_\tria\|_{\cL(\V_\tria,\W_\tria')}, \|D^{-1}_\tria\|_{\cL(\W_\tria',\V_\tria)}\big)<\infty$,
for the construction of uniform preconditioners it remains to find $B_\tria \in \cLis(\W_\tria, \W'_\tria)$ with 
$\sup_{T \in \bbT} \max\big(\|B_\tria\|_{\cL(\W_\tria,\W_\tria')}, \|\Re(B_\tria)^{-1}\|_{\cL(\W_\tria',\W_\tria)}\big)<\infty$.

\new{We add the following two assumptions on the collection $\Theta_\tria$ of `bubbles' and their span $\B_\tria:=\Span \Theta_\tria$. For $k \in \{0,1\}$ it holds that
\begin{align} \label{14}
\|\sum_{T \in \tria} c_T \theta_T\|^2_{H^k(\Omega)} &\eqsim \sum_{T \in \tria} h_T^{-2k} |c_T|^2 \| \theta_T\|_{L_2(\Omega)}^2 ,\quad ((c_T)_{T \in \tria} \subset \R),
\intertext{and} \label{stab}
\|u+v\|_{H^k(\Omega)}^2 &\gtrsim \|u\|_{H^k(\Omega)}^2+\|v\|_{H^k(\Omega)}^2 \quad(u \in \Ss^{0,1}_{\tria,0},\,v \in \B_\tria).
\end{align}}%
(Here and in the following, $\Omega$ should be read as $\Gamma$ in the manifold case).
\new{Both properties are easily verified by a standard homogeneity argument for both our earlier specifications of possible $\Theta_\tria$. From \eqref{stab} it follows that $\Ss^{0,1}_{\tria,0} \cap \B_\tria=\{0\}$. Let $I_\tria^\Ss$ be the linear projector defined on $\Ss^{0,1} \oplus \B_\tria$ by $\ran I_\tria^\Ss=\Ss^{0,1}_{\tria,0}$ and $\ran I_\tria^\B =\B_\tria$, where $I_\tria^\B:=\identity-I_\tria^\Ss$.}

  Below we give a construction of suitable $B_\tria$ that is \emph{independent} of the particular bubbles $\Theta_\tria$ being chosen.
 Like $\W_\tria$, we equip $\Ss^{0,1}_{\tria,0}$, $\B_\tria$, and $\Ss^{0,1}_{\tria,0}\oplus \B_\tria$ with $\|\,\|_\W$.
 
 \begin{proposition} \label{prop3}
 Given $B_\tria^{\Ss} \in \cLis(\Ss^{0,1}_{\tria,0},(\Ss^{0,1}_{\tria,0})')$ and $B_\tria^{\B} \in \cLis(\B_\tria,\B_\tria')$, let $B^{\new{\Ss\oplus\B}}_\tria \colon \Ss^{0,1}_{\tria,0}\oplus \B_\tria \rightarrow (\Ss^{0,1}_{\tria,0} \oplus \B_\tria)'$ be defined by
 $$
 (B^{\Ss\oplus\B}_\tria \new{w})(\new{\tilde w})\new{:=}(B^{\Ss}_\tria \new{I_\tria^\Ss w})(\new{I_\tria^\Ss \tilde w})+(B^\B_\tria \new{I_\tria^\B w})(\new{I_\tria^\B \tilde w}).
 $$
 Then \new{thanks to \eqref{stab}, one has} $B^{\Ss\oplus\B}_\tria  \in \cLis(\Ss^{0,1}_{\tria,0}\oplus \B_\tria ,(\Ss^{0,1}_{\tria,0}\oplus \B_\tria )')$, and
 \begin{align*}
 & \|\Re(B^{\Ss\oplus\B}_\tria)^{-1}\|_{\cL((\Ss^{0,1}_{\tria,0}\oplus \B_\tria)',\Ss^{0,1}_{\tria,0}\oplus \B_\tria)} \\
 &\hspace*{5em} \leq 2 \max(\|\Re(B_\tria^{\Ss} )^{-1}\|_{\cL((\Ss^{0,1}_{\tria,0})',\Ss^{0,1}_{\tria,0})},\|\Re(B_\tria^\B)^{-1}\|_{\cL(\B'_\tria,\B_\tria)}),\\
& \|B^{\Ss\oplus\B}_\tria\|_{\cL(\Ss^{0,1}_{\tria,0}\oplus \B_\tria,(\Ss^{0,1}_{\tria,0}\oplus \B_\tria)')}   \lesssim  \max(\|B_\tria^{\Ss} \|_{\cL(\Ss^{0,1}_{\tria,0},(\Ss^{0,1}_{\tria,0})')},\|B_\tria^\B\|_{\cL(\B_\tria,\B_\tria')}).
\end{align*}
 \end{proposition}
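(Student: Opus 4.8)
The plan is to exploit the block-diagonal structure of $B_\tria^{\Ss\oplus\B}$ with respect to the (topological) direct-sum decomposition $\Ss^{0,1}_{\tria,0}\oplus\B_\tria$, where by \eqref{stab} the projectors $I_\tria^\Ss$ and $I_\tria^\B$ are bounded uniformly in $\tria$ with respect to $\|\cdot\|_\W$. First I would observe that coercivity is immediate: for $w\in\Ss^{0,1}_{\tria,0}\oplus\B_\tria$,
\[
(B^{\Ss\oplus\B}_\tria w)(w)=(B^{\Ss}_\tria I_\tria^\Ss w)(I_\tria^\Ss w)+(B^\B_\tria I_\tria^\B w)(I_\tria^\B w)\gtrsim \|I_\tria^\Ss w\|_\W^2+\|I_\tria^\B w\|_\W^2\gtrsim \|w\|_\W^2,
\]
using coercivity of $B_\tria^\Ss$ and $B_\tria^\B$ on their respective spaces and then \eqref{stab}; the constant in the last step is the (uniform) lower bound from \eqref{stab}. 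This already gives $B^{\Ss\oplus\B}_\tria\in\cLis(\Ss^{0,1}_{\tria,0}\oplus\B_\tria,(\Ss^{0,1}_{\tria,0}\oplus\B_\tria)')$ together with the identity (from the Notations section) $\|\Re(C)^{-1}\|=(\inf_y (Cy)(y)/\|y\|^2)^{-1}$, which turns the coercivity estimate into the claimed bound on $\|\Re(B^{\Ss\oplus\B}_\tria)^{-1}\|$. The factor $2$ appears because $\|I_\tria^\Ss w\|_\W^2+\|I_\tria^\B w\|_\W^2 \gtrsim \tfrac12\big(\text{(per-block coercivity constants)}\big)^{-1}$-type bookkeeping; more precisely, bounding each block quadratic form below by $\|\Re(B_\tria^\Ss)^{-1}\|^{-1}\|I_\tria^\Ss w\|_\W^2$ and $\|\Re(B_\tria^\B)^{-1}\|^{-1}\|I_\tria^\B w\|_\W^2$, then using $\|I_\tria^\Ss w\|_\W^2+\|I_\tria^\B w\|_\W^2\ge\tfrac12\|w\|_\W^2$ (triangle inequality, $(a+b)^2\le 2a^2+2b^2$), yields exactly the stated constant $2\max(\cdots)$.

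Next I would handle the upper bound on $\|B^{\Ss\oplus\B}_\tria\|$. For $w,\tilde w\in\Ss^{0,1}_{\tria,0}\oplus\B_\tria$, by boundedness of $B_\tria^\Ss$, $B_\tria^\B$, and Cauchy–Schwarz on the two terms,
\[
|(B^{\Ss\oplus\B}_\tria w)(\tilde w)|\le \|B_\tria^\Ss\|\,\|I_\tria^\Ss w\|_\W\|I_\tria^\Ss\tilde w\|_\W+\|B_\tria^\B\|\,\|I_\tria^\B w\|_\W\|I_\tria^\B\tilde w\|_\W,
\]
and then $\|I_\tria^\Ss w\|_\W,\|I_\tria^\B w\|_\W\lesssim\|w\|_\W$ uniformly (again \eqref{stab}, via the bounded-projector consequence), so the whole thing is $\lesssim\max(\|B_\tria^\Ss\|,\|B_\tria^\B\|)\|w\|_\W\|\tilde w\|_\W$. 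Taking the supremum over $\tilde w$ and then $w$ gives the operator-norm bound; the implied constant absorbs the $\|I_\tria^\Ss\|_{\cL(\W,\W)},\|I_\tria^\B\|_{\cL(\W,\W)}$ factors, which are bounded by a quantity depending only on the \eqref{stab}-constant, hence uniformly in $\tria$.

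The only genuine subtlety — the part I would treat most carefully — is making sure that every constant that enters is controlled \emph{uniformly in $\tria$}, i.e.\ that \eqref{stab} is understood as a family inequality with one $\gtrsim$-constant for all $\tria\in\bbT$, so that the induced bound $\|I_\tria^\Ss\|_{\cL(\W,\W)}\lesssim 1$ is likewise uniform; everything else is the standard block-operator argument. I would also note explicitly that the norms on $\Ss^{0,1}_{\tria,0}$, $\B_\tria$, and their sum are all taken to be $\|\cdot\|_\W$ (as stipulated just before the proposition), which is what makes the decomposition-into-blocks argument compatible with the norms appearing in the constants on the right-hand side. With that in place the proposition follows by assembling the coercivity bound, the $\|\Re(C)^{-1}\|$ identity, and the boundedness estimate as above.
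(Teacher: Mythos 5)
Your proposal is correct and follows essentially the same route as the paper's proof: lower-bound the quadratic form blockwise by the coercivity constants $\|\Re(B_\tria^{\Ss})^{-1}\|^{-1}$, $\|\Re(B_\tria^{\B})^{-1}\|^{-1}$ and use the elementary inequality $\tfrac12\|w\|_\W^2\leq\|I_\tria^\Ss w\|_\W^2+\|I_\tria^\B w\|_\W^2$ for the factor $2$, while the upper bound uses blockwise boundedness together with $\|I_\tria^\Ss w\|_\W^2+\|I_\tria^\B w\|_\W^2\lesssim\|w\|_\W^2$ from \eqref{stab}. Your explicit remarks on the uniformity in $\tria$ of the \eqref{stab}-constant and on all spaces carrying $\|\cdot\|_\W$ are consistent with the paper's conventions and add nothing that conflicts with its argument.
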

  
 \begin{proof}  One has
 \begin{align*}
  |(B^{\Ss\oplus\B}_\tria \new{w})(\new{w})|\geq & \min(\|\Re(B_\tria^{\Ss} )^{-1}\|^{-1}_{\cL((\Ss^{0,1}_{\tria,0})',\Ss^{0,1}_{\tria,0})},\|\Re(B_\tria^\B)^{-1}\|^{-1}_{\cL(\B'_\tria,\B_\tria)})\\
  & \times( \|\new{I_\tria^\Ss w}\|_\W^2+\|\new{I_\tria^\B w}\|_\W^2),
  \end{align*}
and 
   \begin{align*}
 |(B^{\Ss\oplus\B}_\tria w)(\tilde w)|\leq &\max(\|B_\tria^{\Ss} \|_{\cL(\Ss^{0,1}_{\tria,0},(\Ss^{0,1}_{\tria,0})')},\|B_\tria^\B\|_{\cL(\B_\tria,\B_\tria')}) \\
 & \times \sqrt{\|\new{I_\tria^\Ss w}\|_\W^2+\|\new{I_\tria^\B w}\|_\W^2}\sqrt{\|\new{I_\tria^\Ss \tilde w}\|_\W^2+\|\new{I_\tria^\B \tilde w}\|_\W^2}.
 \end{align*}
\new{ From the triangle inequality and \eqref{stab}, one has $\frac12\|w\|_\W^2 \leq \|I_\tria^\Ss w\|^2_\W+\|I_\tria^\B w\|^2_\W
\lesssim  \|w\|_\W^2$, which 
 completes the proof.}
 \end{proof}
 
\new{By equipping $\W_\tria$, $\Ss^{0,1}_{\tria,0}$ and $\B_\tria$ by $\Psi_\tria$, $\Phi_\tria$, and $\Theta_\tria$, respectively, the applications of $I_\tria^\Ss|_{\W_\tria}$ and $I_\tria^\B|_{\W_\tria}$ can easily determined in linear complexity. Therefore} 
  a suitable definition of $B_\tria\colon\W_\tria\rightarrow \W_\tria'$ is given by $(B_\tria w)(\tilde w)\new{:=}(B^{\Ss\oplus\B}_\tria w)(\tilde w)$. Clearly,
\begin{align*}
\|B_\tria\|_{\cL(\W_\tria,\W_\tria')} &\leq \|B^{\Ss\oplus\B}_\tria\|_{\cL(\Ss^{0,1}_{\tria,0}\oplus \B_\tria,(\Ss^{0,1}_{\tria,0}\oplus \B_\tria)')},\\
\|\Re(B_\tria)^{-1}\|_{\cL(\W_\tria',\W_\tria)} &\leq  \|\Re(B^{\Ss\oplus\B}_\tria)^{-1}\|_{\cL((\Ss^{0,1}_{\tria,0}\oplus \B_\tria)',\Ss^{0,1}_{\tria,0}\oplus \B_\tria)}.
\end{align*}

 An obvious choice for $B_\tria^{\B} \in \cLis(\B_\tria,(\B_\tria)')$ such that
 $$
\max\big( \sup_{\tria\in \bbT}  \|B_\tria^\B\|_{\cL(\B_\tria,\B_\tria')}, \|\Re(B_\tria^\B)^{-1}\|_{\cL(\B'_\tria,\B_\tria)}\big)<\infty,
 $$
 is, in view of \eqref{14} and \new{$\|\theta_T\|_{L_2(\Omega)} \stackrel{\eqref{C1}}{\eqsim} \frac{\langle \theta_T,\xi_T\rangle_{L_2(\Omega)}}{\|\xi_T\|_{L_2(\Omega)}}
 \stackrel{\eqref{scale}}{=} |T| \|\xi_T\|_{L_2(\Omega)}^{-1}= h^{d/2}$}, given by 
\be \label{17}
\big(B_\tria^{\B} \sum_{T \in \tria} c_T \theta_T\big)\big(\sum_{T \in \tria} d_T \theta_T\big):=\beta \sum_{T\in \tria} h_T^{d-2s} c_T d_T.
\ee
for some constant $\beta$.

Possible choices for $B_\tria^{\Ss}  \in \cLis(\Ss^{0,1}_{\tria,0},(\Ss^{0,1}_{\tria,0})')$ with 
$$
\sup_{\tria\in \bbT} \big( \|B_\tria^{\Ss} \|_{\cL(\Ss^{0,1}_{\tria,0},(\Ss^{0,1}_{\tria,0})')}, \|\Re(B_\tria^{\Ss} )^{-1}\|_{\cL((\Ss^{0,1}_{\tria,0})',\Ss^{0,1}_{\tria,0})}\big)<\infty
$$
include 
$ (B_\tria^{\Ss}  u)(v):=(B u)(v)$ ($u,v \in \Ss^{0,1}_{\tria,0}$) for some $B \in \Lis_c(\W,\W')$.

For $d \in \{2,3\}$ and $\W=H^{\frac{1}{2}}_{00}(\Gamma):=[L_2(\Gamma),H^1_0(\Gamma)]_{\frac{1}{2},2}$,  for \new{this} $B$ one may take the hypersingular integral operator, whereas for $\partial\Gamma \neq \emptyset$, and $\W=H^{\frac{1}{2}}(\Gamma)=[L_2(\Gamma),H^1(\Gamma)]_{\frac{1}{2},2}$
the recently introduced modified hypersingular integral operator can be applied (see \cite{138.28}).
(Note that $H^1_0(\Gamma)=H^1(\Gamma)$ when $\partial\Gamma = \emptyset$.)

For a family of quasi-uniform partitions generated by a repeated application of \emph{uniform refinements} starting from some given initial partition, a computationally attractive alternative \new{choice for $B_\tria^{\Ss}$} is provided by the
multi-level \new{operator} from \cite{34.8}, whose application can be performed in \emph{linear complexity}.
\new{In \cite{249.98}, such operators will be discussed that also apply on locally refined meshes.}

\new{For $\W=H^1_{0,\gamma}(\Omega)$, i.e., when $A$ is an operator of order $-2$ (cf. \cite{75.067}), one obviously takes $(B_\tria^{\Ss}u)(v)=\int_\Omega \nabla u \cdot \nabla v\,dx$, or $(B_\tria^{\Ss}u)(v)=\int_\Omega \nabla u \cdot \nabla v\,dx+\int_\Omega u  v\,dx$ when $\meas(\gamma)=0$, whose application can be performed in linear complexity.} 

\subsection{Implementation} \label{impl} 
 For both the domain case and the construction in the manifold case in \S\ref{Smodified}, the matrix representation $\bm{G}_\tria=\cF_{\Xi_\tria}^{-1} G_\tria (\cF'_{\Xi_\tria})^{-1}$ of our preconditioner $G_\tria$ reads as \framebox{$\bm{G}_\tria=\bm{D}_\tria^{-1} \bm{B}_\tria \bm{D}_\tria^{-\top}$} with 
 $$
 \framebox{$\bm{D}_\tria=\diag\{|T|\colon T \in \tria\}$},
 $$
 and
  \begin{align*}
\framebox{$\bm{B}_\tria$}& := \cF'_{\Psi_\tria} B_\tria \cF_{\Psi_\tria}\\
&= \cF'_{\Psi_\tria}
\big(\new{({I^\Ss_\tria}|_{\W_\tria})'} B_\tria^{\Ss}  \new{{I^\Ss_\tria}|_{\W_\tria}}+
\new{ {(I^\B_\tria}|_{\W_\tria})'} B_\tria^{\B} \new{I^\B_\tria|_{\W_\tria}} \big)\cF_{\Psi_\tria}\\
& =\framebox{$\bm{p}_\tria^\top \bm{B}_\tria^{\Ss} \bm{p}_\tria+\bm{q}_\tria^\top \bm{B}^{\B}_\tria \bm{q}_\tria$},
\end{align*}
where
\begin{alignat*}{2}
\bm{B}_\tria^{\Ss}&:=\cF_{\Phi_\tria}' B_\tria^{\Ss}  \cF_{\Phi_\tria},& \quad \bm{p}_\tria&:=\cF^{-1}_{\Phi_\tria}  \new{{I^\Ss_\tria}|_{\W_\tria}}\cF_{\Psi_\tria},\\
\bm{B}^{\B}_\tria&:=\cF_{\Theta_\tria}'B_\tria^{\B} \cF_{\Theta_\tria},&\quad \bm{q}_\tria&:=\cF^{-1}_{\Theta_\tria}\new{{I^\B_\tria}|_{\W_\tria}} \cF_{\Psi_\tria}.
  \end{alignat*}
By substituting the definition of $B_\tria^{\B}$ from \eqref{17},  the definition of the basis $\Psi_\tria=\{\psi_{\tria,T}\}_{T \in \tria}$ for $\W_\tria$ from \eqref{psi1} and \eqref{psi2},
 and that of the bases $\Phi_\tria=\{\phi_{\tria,\nu}\}_{\nu \in N^0_\tria}$
 and 
 $\Theta_\tria=\{\theta_T\}_{T \in \tria}$ for $\Ss^{0,1}_{\tria,0}$ and $\B_\tria$, respectively, we find that 
\begin{align*}
&\framebox{$\bm{B}^{\B}_\tria=  \beta \bm{D}_\tria^{1-\frac{2s}{d}}$},
\quad
\framebox{$(\bm{p}_\tria)_{\nu T}=\left\{\begin{array}{cl} d_{\tria,\nu}^{-1} & \text{if } \nu \in N^0_{\tria,T},\\ 0 & \text{if }\nu \not\in N^0_{\tria,T},\end{array}\right.$}\\
& \framebox{$\displaystyle (\bm{q}_\tria)_{T' T}=\delta_{T' T}- {\textstyle \frac{1}{d+1}}\hspace*{-1em}\sum_{\nu \in N^0_{\tria,T} \cap N^0_{\tria,T'}} d_{\tria,\nu}^{-1}$},
\end{align*}
whereas \framebox{$\bm{B}_\tria^{\Ss}$} depends on $B_\tria^{\Ss}  \in \cLis(\Ss^{0,1}_{\tria,0},(\Ss^{0,1}_{\tria,0})')$ being chosen.
\new{The cost of the application of $\bm{G}_\tria$ is the cost of the application of $\bm{B}_\tria^{\Ss}$  plus cost that scales linearly in $\# \tria$.}
\note{Less relevant remark removed}
%\begin{remark} \label{constantbeta} So far we determined a suitable constant $\beta$ by comparing different choices numerically.
%    A possible choice for $\beta$ is to pick it such that $ \rho(\bm{q}_\tria^\top \bm{B}^{\B}_\tria \bm{q}_\tria \bm{A}_\tria) = \rho(\bm{p}_\tria^\top \bm{B}_\tria^{\Ss} \bm{p}_\tria\bm{A}_\tria)$.
% Another option would be to replace $\beta |T|^{1-\frac{2s}{d}}$ by (an approximation for) $\|\theta_T\|^2_\W$. In that case, however, the non-trivial question arises which choice of the bubbles $\theta_T$ would give %the best results.
%\end{remark}

\section{Preconditioning an operator of negative order discretized by {\fontshape{scit}\selectfont continuous} piecewise \new{linears}.} \label{Slinears}
Let a bounded polytopal domain $\Omega \subset \R^d$, $\gamma\subset \partial\Omega$, $s \in [0,1]$, 
 $\W:=[L_2(\Omega),H^1_{0,\gamma}(\Omega)]_{s,2}$, $\V:=\W'$, 
$D \in \Lis(\V,\W')$, $(\tria)_{\tria \in \bbT}$, $N^0_\tria$, $d_{\tria,\nu}$, $N_T$, and $N^0_{\tria,T}$ be all as in Sect.~\ref{Sdomain_case}. 
 In addition, for $\tria \in \bbT$ let $N_\tria$ be the set of \emph{all} vertices of $\tria$, \new{so including those on a possibly non-empty $\gamma$}, and
for $\nu \in N_\tria$ let $\omega_\tria(\nu):=\cup_{\{T \in \tria\colon \nu \in  N_T\}} \overline{T}$.

 We take
 $$
\framebox{$\V_\tria=\Ss^{0,1}_\tria:=\{u \in H^1(\Omega)\colon u|_T \in \cP_1 \,(T \in \tria)\} \subset \V$},
$$
and, as in Sect.~\ref{Sdomain_case},
$$
\Ss^{0,1}_{\tria,0}:=\{u \in H^1_{0,\gamma}(\Omega)\colon u|_T \in \cP_1 \,(T \in \tria)\},
 $$
 equipped with nodal bases $\Xi_\tria=\{\xi_{\tria,\nu}\colon \nu \in N_\tria\}$ and $\Phi_\tria=\{\phi_{\tria,\nu}\colon \nu \in N^0_\tria\}$, respectively,
defined by
 $$
  \xi_{\tria,\nu}(\nu')=\delta_{\nu,\nu'} \quad (\nu,\nu'\in N_\tria),
$$
and $\phi_{\tria,\nu}=\xi_{\tria,\nu}$ for $\nu \in N^0_\tria$.

Analogously to the case of \emph{discontinuous} piecewise \new{constant} trial spaces in $\V$ studied in Sect.~\ref{Sdomain_case}, using the framework of operator preconditioning outlined in Sect.~\ref{Soper} we are going to construct a family of preconditioners $G_\tria \in \cLis(\V_\tria',\V_\tria)$ of type $D_\tria^{-1} B_\tria (D_\tria')^{-1}$ with uniformly bounded 
$\|G_\tria\|_{\cL(\V_\tria',\V_\tria)}$ and $\|\Re(G_\tria)^{-1}\|_{\cL(\V_\tria,\V_\tria')}$.

\new{The roles played in Sect.~\ref{Sdomain_case} by $|T|$ ($= |\supp \xi_T|$) and $h_T=|T|^{1/d}$, are in this section going to be played by $|\omega_\tria(\nu)|$ ($= |\supp \xi_{\tria,\nu}|$) and $h_{\tria,\nu}:=|\omega_\tria(\nu)|^{1/d}$.}
 
\subsection{Construction of $\W_\tria$ and $\bm{D}_\tria$} \label{SlinearsWD} To construct a collection $\Psi_\tria=\{\psi_{\tria,\nu}\colon \nu \in N_\tria\} \subset H^1_{0,\gamma}(\Omega)$ that is biorthogonal to $\Xi_\tria$, \new{consists of locally supported functions}, and for which
$$
\framebox{$\W_\tria:=\Span \Psi_\tria \subset \W$}
$$
has an `approximation property', as in Sect.~\ref{Sdomain_case} we need two collections $\Sigma_\tria \subset \Ss^{0,1}_{\tria,0}$ and $\Theta_\tria \subset H^1_{0,\gamma}(\Omega)$ \new{of locally supported functions with $\#\Sigma_\tria=\# \Theta_\tria=\# \Xi_\tria$}, where $\Theta_\tria$ is biorthogonal to $\Xi_\tria$, and $\Sigma_\tria$ has an `approximation property'.

We define $\Sigma_\tria=\{\sigma_{\tria,\nu}\colon \nu \in N_\tria\}$ by $\sigma_{\tria,\nu}:=\phi_{\tria,\nu}$ when $\nu \in N^0_\tria$, and $\sigma_{\tria,\nu}:=0$ when $\nu \in N_\tria \setminus  N^0_\tria$. Then, obviously, $\sum_{\nu \in N_\tria} \sigma_{\tria,\nu}$ equals $\1$ on $\Omega \setminus \cup_{\{T \in \tria\colon \overline{T} \cap \gamma \neq \emptyset\}} \overline{T}$.

For constructing $\Theta_\tria$,  on a reference $d$-simplex $\hat{T}$ for $\eps>0$ we consider a smooth $\eta_\eps \in [0,1]$, symmetric in the barycentric coordinates, with $\eta_\eps(x)=0$ when $d(x,\partial\hat{T})<\eps$, and $\eta_\eps(x)=1$ when $d(x,\partial\hat{T})>2\eps$.
Then for some fixed $\eps>0$ small enough, it holds that
\be \label{localinfsup}
\inf_{0 \neq p \in \cP_1(\hat{T})} \sup_{0 \neq q \in \cP_1(\hat{T})} \frac{\langle p,\eta_\eps q\rangle_{L_2(\hat{T})}}{\|p\|_{L_2(\hat{T})}\|\eta_\eps q\|_{L_2(\hat{T})}}>0,
\ee
meaning that the biorthogonal projector $P_\eps \in \cL(L_2(\hat{T}),L_2(\hat{T}))$ with $\ran P_\eps=\eta_\eps \cP_1(\hat{T})$ and $\ran(\identity-P_\eps)=\cP_1(\hat{T})^{\perp_{L_2(\hat{T})}}$ exists.
Consequently, with $\Phi_{\hat{T}}=\{\phi_{\hat{T},\nu}\colon \nu \in N_{\hat{T}}\}$ being the nodal basis for $\cP_1(\hat{T})$, we have that 
$$
\{\tilde{\phi}_{\hat{T},\eps,\nu}\colon \nu \in N_{\hat{T}}\}:=\langle \Phi_{\hat{T}},\Phi_{\hat{T}}\rangle_{L_2(\hat{T})}^{-1} P_\eps \Phi_{\hat{T}} \subset H^1_0(\hat{T})
$$
 is $L_2(\hat{T})$-biorthonormal to $\{\phi_{\hat{T},\nu}\colon \nu \in N_{\hat{T}}\}$.

Now for $T \in \tria$, let $F_{\hat{T},T}:T \rightarrow \hat{T}$ be an affine bijection. Then $\{\tilde{\phi}_{T,\eps,\nu}\colon \nu \in N_{T}\}$ defined by
\be \label{phitilde}
\tilde{\phi}_{T,\eps,\nu}:={\textstyle \frac{|\hat{T}|}{|T|}} \tilde{\phi}_{\hat{T},\eps,F_{\hat{T},T}(\nu)}
\ee
is $L_2(T)$-biorthonormal to the nodal basis for $P_1(T)$.

By selecting for $\nu \in N_\tria$, a $T(\nu) \in \tria$ with $\nu \in N_T$, and defining $\Theta_\tria=\{\theta_{\tria,\nu} \colon \nu \in N_\tria\} \subset H^1_{0,\gamma}(\Omega)$
by 
$$
\theta_{\tria,\nu} := |\omega_\tria(\nu)| \tilde{\phi}_{T(\nu),\eps,\nu},
$$
where the specific scaling is chosen for convenience, 
we have for $\nu,\, \nu'\in N_\tria$,
\be \label{props}
\begin{split}
& \delta_{\nu \nu'} |\omega_\tria(\nu)|=\langle \theta_{\tria,\nu},\xi_{\tria,\nu'}\rangle_{L_2(\Omega)} \eqsim \delta_{\nu \nu'} \|\theta_{\tria,\nu}\|_{L_2(\Omega)} \|\xi_{\tria,\nu'}\|_{L_2(\Omega)},\\
&\supp \theta_{\tria,\nu} \subset \overline{T(\nu)},\,\,
|\theta_{\tria,\nu}|_{H^1(\Omega)}  \lesssim \new{h_{\tria,\nu}^{-1}} \|\theta_{\tria,\nu}\|_{L_2(\Omega)},
\end{split}
\ee
i.e., properties analogous to \eqref{C1}, \new{\eqref{e1}(c), and \eqref{e3}}.

Since furthermore $\Sigma_\tria$ and $\Xi_\tria$ satisfy properties analogous to \new{\eqref{e1}(a,b), \eqref{e2} and \eqref{e4}}, defining similarly to \eqref{defpsi}
\be \label{deflinpsi}
\framebox{$\begin{split}
&\psi_{\tria,\nu} := \sigma_{\tria,\nu}+\frac{\langle \1-\sigma_{\tria,\nu},\xi_{\tria,\nu}\rangle_{L_2(\Omega)}}{\langle \theta_{\tria,\nu},\xi_{\tria,\nu}\rangle_{L_2(\Omega)}} \theta_{\tria,\nu}-\!\!\sum_{\nu' \in N_\tria\setminus\{\nu\}} \!\!\frac{\langle \sigma_{\tria,\nu},\xi_{\tria,\nu'}\rangle_{L_2(\Omega)}}{\langle \theta_{\tria,\nu'},\xi_{\tria,\nu'}\rangle_{L_2(\Omega)}} \theta_{\tria,\nu'}\\
&= \left\{
\begin{array}{@{}ll@{}}
{\textstyle \frac{\theta_{\tria,\nu}}{d+1}} & \nu \in N_\tria \setminus N^0_\tria,\\
 \phi_{\tria,\nu}+\frac{d}{(d+2)(d+1)}\theta_{\tria,\nu}-\!\!{\displaystyle\sum_{\nu' \in N_\tria\setminus\{\nu\}}} \!\!\frac{|\omega_\tria(\nu)\cap\omega_\tria(\nu')|}{(d+2)(d+1)|\omega_\tria(\nu')|}\theta_{\tria,\nu'}& \nu \in  N^0_\tria,
\end{array}
\right.
\end{split}$}\hspace*{-2.7ex}
\ee
\new{we infer that $\sum_{\nu \in N_\tria}\psi_{\tria,\nu}$ equals $\1$ possibly except on a strip along the Dirichlet boundary, and similarly to Theorem~\ref{theorem1}, that
the biorthogonal projector
\be \label{biorthprojlin}
P_\tria\colon u \mapsto \sum_{\nu \in N_\tria} \frac{\langle u,\xi_{\tria,\nu}\rangle_{L_2(\Omega)}}{\langle \1,\xi_{\tria,\nu}\rangle_{L_2(\Omega)}}\psi_{\tria,\nu},
\ee
satisfies $\sup_{\tria \in \bbT} \|P_\tria\|_{\cL(\W,\W)}<\infty$.} 
With $(D_\tria v)(w):=(D v)(w)$ ($(v,w) \in \V_\tria\times \W_\tria$), we have  $\|D_\tria\|_{\cL(\V_\tria,\W_\tria')} \leq 1$ and $\sup_{\tria \in \bbT} \|D^{-1}_\tria\|_{\cL(\W'_\tria,\V_\tria)}<\infty$, and
$$
\framebox{$\bm{D}_\tria= \cF_{\Psi_\tria}' D_\tria \cF_{\Xi_\tria}=\diag\{\langle \1,\xi_{\tria,\nu}\rangle_{L_2(\Omega)}\colon \nu \in N_\tria\}=\diag\Big\{{\textstyle \frac{1}{d+1}} |\omega_\tria(\nu)| \colon \nu \in N_\tria\Big\}$}.
$$

\subsection{Construction of $B_\tria \in \cLis(\W_\tria,\W_\tria')$.} \label{Bmani}
Since $\Theta_\tria$ additionally satisfies, for $k \in \{0,1\}$, 
\begin{align*}
&\big\|\sum_{\nu \in N_\tria} c_\nu \theta_{\tria,\nu}\big\|^2_{H^k(\Omega)} \eqsim \sum_{\nu \in N_\tria} \new{h_{\tria,\nu}^{-k}} \|\theta_{\tria,\nu}\|_{L_2(\Omega)}^2 |c_\nu|^2, 
\intertext{where \new{$\|\theta_{\tria,\nu}\|_{L_2(\Omega)}\stackrel{\eqref{props}}{=}|\omega_\tria(\nu)|\|\xi_{\tria,\nu}\|_{L_2(\Omega)}^{-1}\eqsim |\omega_\tria(\nu)|^{\frac12}$}, and}
&\|u+v\|_{H^k(\Omega)}^2 \gtrsim \|u\|_{H^k(\Omega)}^2+\|v\|_{H^k(\Omega)}^2 \quad(u \in \Ss^{0,1}_{\tria,0},\,v \in \B_\tria:=\Span \Theta_\tria).
\end{align*}
(cf. \eqref{14}-\eqref{stab}), we construct $B_\tria$ analogously as in \S\ref{SB}:
Assuming that we have a $B_\tria^{\Ss} \in \cLis(\Ss^{0,1}_{\tria,0},(\Ss^{0,1}_{\tria,0})')$ available
with $\sup_{\tria\in \bbT}  \|B_\tria^{\Ss} \|_{\cL(\Ss^{0,1}_{\tria,0},(\Ss^{0,1}_{\tria,0})')}<\infty$ and $\sup_{\tria\in \bbT}  \|\Re(B_\tria^{\Ss} )^{-1}\|_{\cL((\Ss^{0,1}_{\tria,0})',\Ss^{0,1}_{\tria,0})}<\infty$,
for some constant $\beta>0$ we take
$$
\big(B_\tria^{\B} \sum_{\nu \in N_\tria} c_\nu \theta_{\tria,\nu}\big)\big(\sum_{\nu \in   N_\tria} d_\nu \theta_{\tria,\nu}\big):=\beta \sum_{\nu \in N_\tria} |\omega_\tria(\nu)|^{1-\frac{2s}{d}}  c_\nu d_\nu,
$$
% for some constant $\beta>0$. %\eqref{17}
and
$$
B_\tria:=  \new{(I_\tria^\Ss|_{\W_\tria})'} B_\tria^{\Ss}  \new{I_\tria^\Ss|_{\W_\tria}}+\new{(I_\tria^\B|_{\W_\tria})'}  B_\tria^{\B} \new{I_\tria^\B|_{\W_\tria}},
$$
\new{where $I_\tria^\Ss$ is the linear projector defined on $\Ss_{\tria,0}^{0,1} \oplus \B_\tria$ by $\ran I_\tria^\Ss=\Ss^{0,1}_{\tria,0}$ and $\ran I_\tria^\B =\B_\tria$, where $I_\tria^\B:=\identity-I_\tria^\Ss$.}
 Then one has $\sup_{\tria\in \bbT}  \|B_\tria \|_{\cL(\W_\tria,\W_\tria')}<\infty$ and $\sup_{\tria\in \bbT}  \|\Re(B_\tria )^{-1}\|_{\cL(\W_\tria',\W_\tria)}<\infty$.

Substituting the definition of $\psi_{\tria,\nu}$, one infers that \framebox{$\bm{G}_\tria=\bm{D}_\tria^{-1} \bm{B}_\tria \bm{D}_\tria^{-\top}$}, where
$$
 \framebox{$\bm{B}_\tria=\bm{p}_\tria^\top \bm{B}_\tria^{\Ss} \bm{p}_\tria+\bm{q}_\tria^\top \bm{B}^{\B}_\tria \bm{q}_\tria$},
 $$
and 
\begin{align*}
&\framebox{$(\bm{q}_\tria)_{\nu' \nu}:=
\begin{cases}
 \frac{\delta_{\nu' \nu}}{d+1} & \nu \in N_\tria \setminus N^0_\tria,\\
\frac{d}{(d+2)(d+1)} & \nu \in N^0_\tria, \nu' =\nu,\\
-\frac{|\omega_\tria(\nu)\cap\omega_\tria(\nu')|}{(d+2)(d+1)|\omega_\tria(\nu')|} & \nu \in N^0_\tria, \nu' \neq \nu,
\end{cases}$}
\qquad
\framebox{$\bm{B}_\tria^{\Ss}:=\cF_{\Phi_\tria}' B_\tria^{\Ss}  \cF_{\Phi_\tria}$},
\\
&\framebox{$(\bm{p}_\tria)_{\nu' \nu}:=\delta_{\nu' \nu}\, (\nu' \in N^0_\tria,\,\nu \in N_\tria)$},\qquad
\framebox{$\bm{B}^{\B}_\tria :=\diag\{\beta  |\omega_\tria(\nu)|^{1-\frac{2s}{d}}\colon \nu \in N_\tria\}$}.
\end{align*}

\subsection{Manifold case.} \label{Smanilin}
From Sect.~\ref{Smani} recall the definitions of $\Gamma$, $\gamma$, $\W$, $\V$, $\new{\chi}\colon \Omega \rightarrow \cup_{i=1}^p \new{\chi}_i(\Omega_i)$, and that of the family of conforming partitions $\bbT$ of $\Gamma$.

As in the domain case discussed in Sect.~\ref{SlinearsWD}, for $\tria \in \bbT$  let $N_\tria$ be the set of vertices of $\tria$, and
$N^0_\tria$ its subset of vertices not on $\gamma$, for $T \in \tria$ let $N_T$ be the vertices of $T$,
$N^0_{\tria,T}:=N^0_\tria \cap N_T$, and for $\nu \in N_\tria$ let $\omega_\tria(\nu):=\cup_{\{T \in \tria\colon \nu \in N_T\}} \overline{T}$.

 We take
 \begin{align*}
\V_\tria=\Ss^{0,1}_\tria&:=\{u \in H^1(\Gamma)\colon u \circ \new{\chi}|_{\new{\chi}^{-1}(T)} \in \cP_1 \,(T \in \tria)\} \subset \V,\\
\Ss^{0,1}_{\tria,0}&:=\{u \in H^1_{0,\gamma}(\Gamma)\colon u \circ \new{\chi}|_{\new{\chi}^{-1}(T)} \in \cP_1 \,(T \in \tria)\},
 \end{align*}
 equipped with nodal bases $\Xi_\tria=\{\xi_{\tria,\nu}\colon \nu \in N_\tria\}$ and $\Phi_\tria=\{\phi_{\tria,\nu}\colon \nu \in N^0_\tria\}$, respectively,
defined by
 $$
  \xi_{\tria,\nu}(\nu')=\delta_{\nu,\nu'} \quad (\nu,\nu'\in N_\tria),
$$
and $\phi_{\tria,\nu}=\xi_{\tria,\nu}$ for $\nu \in N^0_\tria$.

Actually exclusively for the deriving an inverse inequality analogous to Lemma~\ref{inverse}, first we construct a collection $\Psi_\tria=\{\psi_{\tria,\nu}\colon \nu \in   N_\tria\}\subset H^1_{0,\gamma}(\Gamma)$ that has an `approximation property' and that is biorthogonal to $\Xi_\tria$ w.r.t.~the \emph{true} $L_2(\Gamma)$-scalar product:
We define $\Sigma_\tria=\{\sigma_{\tria,\nu}\colon \nu \in N_\tria\}$ by $\sigma_{\tria,\nu}:=\phi_{\tria,\nu}$ when $\nu \in N^0_\tria$, and $\sigma_{\tria,\nu}:=0$ when $\nu \in N_\tria \setminus  N^0_\tria$. Then, obviously, $\sum_{\nu \in N_\tria} \sigma_{\tria,\nu}$ equals $\1$ on $\Gamma \setminus \cup_{\{T \in \tria\colon \overline{T} \cap \gamma \neq \emptyset\}} \overline{T}$.

Given a $d$-simplex $T \subset  \R^d$, by means of an affine bijection we transport the function $\eta_\eps$, defined in Sect.~\ref{Smani} on a reference $d$-simplex $\hat{T}$, to a function on $T$ and denote it by $\eta_{T,\eps}$.
Then for any panel $T \in \tria \in \bbT$, for some $\eps>0$ small enough it holds that
$$
\inf_{0 \neq p \in \cP_1(\new{\chi}^{-1}(T))} \sup_{0 \neq q \in \cP_1(\new{\chi}^{-1}(T))} \frac{\langle p \circ \new{\chi}^{-1} ,(\eta_{T,\eps} q) \circ \new{\chi}^{-1} \rangle_{L_2(T)}}{\|p\circ \new{\chi}^{-1}\|_{L_2(T)}\|(\eta_{T,\eps} q) \circ \new{\chi}^{-1}\|_{L_2(T)}}>0
$$
Moreover, since the panels $T$ get increasingly flat when $\diam T \rightarrow 0$, there exists an $\eps>0$ such that above inf-sup condition is satisfied \emph{uniformly} over all $T \in \tria \in \bbT$.

By selecting for each $\nu \in  N_\tria$ a $T(\nu) \in \tria$ with $\nu \in N_T$, as in Sect.~\ref{SlinearsWD} we obtain a collection $\Theta_\tria =\{\theta_{\tria,\nu}\colon \nu \in N_\tria\}$ with $\theta_{\tria,\nu} \subset H^1_0(T(\nu))$
 that is biorthogonal to $\Xi_\tria$, in particular that satisfies \eqref{props}, after which we define the $\psi_{\tria,\nu}$ by means of formula \eqref{deflinpsi}.
Having constructed the biorthogonal collections $\Xi_\tria$ and $\Psi_\tria$, we set the
biorthogonal projector $P_\tria:L_2(\Gamma) \rightarrow H^1_{0,\gamma}(\Gamma)\colon
u \mapsto \sum_{\nu \in N_\tria} \frac{\langle u,\xi_{\tria,\nu} \rangle_{L_2(\Gamma)}}{\langle \1,\xi_{\tria,\nu}\rangle_{L_2(\Gamma)}} \psi_{\tria,\nu}$ which satisfies
$\| P_\tria u\|_{H^1(\Gamma)} \lesssim \|h_\tria^{-1} u\|_{L_2(\Gamma)}$. With the aid of this projector, as in Lemma~\ref{inverse} one infers that
\be \label{inverse2}
\|h_\tria v_\tria\|_{L_2(\Gamma)} \lesssim \|v_\tria\|_{(H^1_{0,\gamma})'} \quad (v_\tria \in \V_\tria).
\ee

Having established this inverse inequality, to arrive at a construction of $\Psi_\tria$ that does not require the evaluation of integrals over $\Gamma$, as in Sect.~\ref{Smodified} we replace $\langle\,,\,\rangle_{L_2(\Gamma)}$ by $\langle\,,\,\rangle_{\tria}$.
We redefine $\Theta_\tria=\{\theta_{\tria,\nu}\colon \nu \in N_\tria\}$ by
$$
\theta_{\tria,\nu}:=|\omega_\tria(\nu)| {\textstyle \frac{|\new{\chi}^{-1}(T)|}{|T|}} \tilde{\phi}_{\new{\chi}^{-1}(T),\eps,\new{\chi}^{-1}(\nu)}\circ \new{\chi}^{-1}
$$
with the $\tilde{\phi}$'s defined in \eqref{phitilde}, and following \eqref{deflinpsi} set $\Psi_\tria=\{\psi_{\tria,\nu}\colon \nu \in N_\tria\}$ and $\W_\tria:=\Span \Psi_\tria$ by
$$
\psi_{\tria,\nu}= \left\{
\begin{array}{@{}ll@{}}
{\textstyle \frac{\theta_{\tria,\nu}}{d+1}} & \nu \in N_\tria \setminus N^0_\tria,\\
\phi_{\tria,\nu}+\frac{d}{(d+2)(d+1)}\theta_{\tria,\nu}-\sum_{\nu' \in N_\tria\setminus\{\nu\}} \frac{|\omega_\tria(\nu)\cap\omega_\tria(\nu')|}{(d+2)(d+1)|\omega_\tria(\nu')|}\theta_{\tria,\nu'}& \nu \in  N^0_\tria,
\end{array}
\right.
$$
As in Sect.~\ref{Smodified}, we set
$(D_\tria v_\tria)(w_\tria):=\langle v_\tria,w_\tria\rangle_\tria$ ($v_\tria \in \V_\tria,\,w_\tria \in \W_\tria$), and
as in Sect.~\ref{Smodified}, using \eqref{inverse2} one shows that $\sup_{\tria \in \bbT} \|D_\tria\|_{\cL(\V_\tria,\W_\tria')} <\infty$.
Similarly as in Lemma~\ref{lem4}, one proves that
$$
\|h_\tria v_\tria\|_{L_2(\Gamma)} \lesssim \sup_{0 \neq w \in H^1_{0,\gamma}(\Gamma)} \frac{\langle v_\tria,w_\tria\rangle_\tria}{\|w\|_{H^1(\Gamma)}},
$$
and with that $\sup_{\tria \in \bbT} \|D^{-1}_\tria\|_{\cL(\W_\tria',\V_\tria)} <\infty$.

Constructing $B_\tria \in \cLis(\W_\tria,\W_\tria')$ as in Sect.~\ref{Bmani}, one arrives at the same expressions for $\bm{D}_\tria$, $\bm{G}_\tria$, $\bm{B}_\tria$, $\bm{q}_\tria$,
$\bm{B}_\tria^{\Ss}$,
$\bm{p}_\tria$, and
$\bm{B}^{\B}_\tria$
as in Sect.~\ref{SlinearsWD}-\ref{Bmani} in the domain case.

%%%%%%%%%%%%%%%%

\section{Higher order case} \label{Shigherorder}
In this section, we discuss the construction of an uniform preconditioner
for  $\V_\tria$ being either the space $\Ss_\tria^{-1,\ell}$ of \emph{discontinuous} piecewise polynomials of degree $\ell>0$ w.r.t. $\tria$, 
or the space $\Ss_\tria^{0,\ell}$ of \emph{continuous} piecewise polynomials of degree $\ell>1$ w.r.t. $\tria$.

We write the spaces $\V_\tria$, $\B_\tria$, $\W_\tria$, and their bases
$\Xi_\tria$, $\Theta_\tria$,
$\Psi_\tria$ from Sect.~\ref{Sdomain_case} or \ref{Slinears} as
 $\V^0_\tria$, $\B^0_\tria$, $\W^0_\tria$, and
$\Xi^0_\tria$, $\Theta^0_\tria$,
$\Psi^0_\tria$, respectively.
The biorthogonal projector formerly denoted as $P_\tria$ will now be denoted as  $P^0_\tria$, and the matrices 
${\bf B}_\tria$ and ${\bf D}_\tria$ as  ${\bf B}^0_\tria$ and ${\bf D}^0_\tria$.

Although we consider the domain case, the results extend to the manifold case following the approach outlined in \S\ref{Smani} or \S\ref{Smanilin}.

In order to construct an uniform preconditioner, obvious possibilities are to apply the framework of operator preconditioning directly to the higher order polynomial space $\V_\tria$, or to use the preconditioner developed for the lowest order case within a subspace correction framework. We investigate both possibilities.

\subsection{Application of the operator preconditioning framework}

\subsubsection{Discontinuous piecewise polynomials} \label{Sdiscontinuous}
Given $\ell>0$, for $\tria \in \bbT$, let $\V_\tria=\Ss_\tria^{-1,\ell}$.
With $m=m(\ell):={d+\ell\choose \ell}-1$, we equip $\V_\tria$ with $\Xi_\tria=\{\xi_{T,i}\colon T \in \tria,\,0 \leq i \leq m\}$, where for each $T \in \tria$, 
$\{\xi_{T,i}\colon 0 \leq i \leq m\}$ is constructed by the common affine lifting approach from a basis for the polynomials of degree $\ell$ on a reference $d$-simplex, such that
$\{\xi_{T,0}\colon T \in \tria\}=\Xi_\tria^0$, $\supp \xi_{T,i} \subset \overline{T}$, and $\|\xi_{T,i}\|_{L_2(\Omega)} \eqsim |T|^{\frac12}$.\footnote{For $i>1$, it is allowed that $\xi_{T,i}$ is (nearly) orthogonal to $\1|_T$, i.e., \eqref{e4} is not required for these  $\xi_{T,i}$.}

A straightforward generalization of the construction in the first paragraphs of \S\ref{SlinearsWD}
of a collection in $H^1_0(T)$ that is biorthogonal to the nodal basis of $P_1(T)$ shows the following:
There exists a set of `bubbles'  $\Theta_\tria = \{\theta_{T,i}\colon T \in \tria,\,0 \leq i \leq m\} \subset H^1_{0,\gamma}(\Omega)$ such that for $T,T' \in \tria$, $0 \leq i, i' \leq m$, $k \in \{0,1\}$,
\begin{align} \label{g0}
&\delta_{(T,i),(T',i')} |T|=\langle \theta_{T,i},\xi_{T',i'}\rangle_{L_2(\Omega)} \eqsim \delta_{(T,i),(T',i')} \|\theta_{T,i}\|_{L_2(\Omega)} \|\xi_{T',i'}\|_{L_2(\Omega)},\\
&\|\theta_{T,i}\|_{H^1(\Omega)} \lesssim h_T^{-1} \|\theta_{T,i}\|_{L_2(\Omega)},\\ 
&\sup \theta_{T,i} \subset \overline{T},\\ \label{g1}
&\big\|\sum_{\{T \in \tria, 0 \leq i \leq m\}} c_{T,i} \theta_{T,i}\big\|^2_{H^k(\Omega)} \eqsim \sum_{\{T \in \tria, 0 \leq i \leq m\}} h_T^{-2k} |c_{T,i}|^2 \| \theta_{T,i}\|_{L_2(\Omega)}^2,\\ \label{g2}
&\|u+v\|_{H^k(\Omega)}^2 \gtrsim \|u\|_{H^k(\Omega)}^2+\|v\|_{H^k(\Omega)}^2 \quad(u \in \Ss^{0,1}_{\tria,0},\,v \in \B_\tria:=\Span \Theta_\tria),\\ \label{g3}
&\{\theta_{T,0}\colon T \in \tria\}=\Theta_\tria^0.
\end{align}

Writing $\Psi_\tria^0=\{\psi^0_{\tria,T}\colon T \in \tria\}$, we define $\Psi_\tria:=\{\psi_{\tria,T,i}\colon T \in \tria,\,0\leq i \leq m\}$, $\W_\tria:=\Span \Psi_\tria$ by
$$
\psi_{\tria,T,i}:=\left\{
\begin{array}{ll} \psi_{\tria,T}^0-\sum_{\{T' \in \tria,1\leq i'\leq m\}} \frac{\langle \psi^0_{\tria,T}, \xi_{T',i'}\rangle_{L_2(\Omega)}}{\langle \theta_{T',i'}, \xi_{T',i'}\rangle_{L_2(\Omega)}} \theta_{T',i'} & i=0,\\
\theta_{T,i} & 1 \leq i \leq m.
\end{array}
\right.
$$
Knowing that $\Psi_\tria^0$ and $\Xi_\tria^0$, and $\Theta_\tria$ and $\Xi_\tria$ are biorthogonal, the correction made to the $\psi_{\tria,T}^0$ ensures that  $\Psi_\tria$ and $\Xi_\tria$ are biorthogonal, in particular that
$$
\langle \psi_{\tria,T,i},\xi_{T',i'}\rangle=\delta_{(T,i),(T',i')} |T|.
$$
For use later, notice that $\W_\tria^0 \subset \W_\tria$, and that by definition of $\psi^0_{\tria,T}$ and $\sigma_{\tria,T}$, for $i'>0$,
\be \label{defr}
\begin{split}
({\bf R}_\tria)_{(T',i'),T}&:=-\frac{\langle \psi^0_{\tria,T}, \xi_{T',i'}\rangle_{L_2(\Omega)}}{\langle \theta_{T',i'}, \xi_{T',i'}\rangle_{L_2(\Omega)}}=-|T'|^{-1} \langle \sigma_{\tria,T}, \xi_{T',i'}\rangle_{L_2(\Omega)}\\
&=-|T'|^{-1} \sum_{\nu \in N^0_{\tria,T} \cap N^0_{\tria,T'}} d_{\tria,\nu}^{-1} \langle \phi_{\tria,\nu}, \xi_{T',i'}\rangle_{L_2(\Omega)}.
\end{split}
\ee

The biorthogonal projector $P_\tria\colon L_2(\Omega) \rightarrow H^1_{0,\gamma}(\Omega)$ 
with $\ran P_\tria=\W_\tria$ and \linebreak$\ran(\identity-P_\tria)=\V_\tria^{\perp_{L_2(\Omega)}}$
 is given by
$$
P_\tria u=\sum_{\{T\in \tria,0 \leq i \leq m\}} \frac{\langle u,\xi_{T,i}\rangle_{L_2(\Omega)}}{\langle \psi_{\tria,T,i},\xi_{T,i}\rangle_{L_2(\Omega)}}\psi_{\tria,T,i}.
$$

\begin{theorem} \label{thm10} It holds that $\sup_{\tria \in \bbT} \|P_\tria\|_{\cL(\W,\W)}<\infty$.
\end{theorem}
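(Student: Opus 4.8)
The plan is to reduce the claim to the boundedness properties already established for the lowest order biorthogonal projector $P^0_\tria$, exploiting that $\W^0_\tria\subset\W_\tria$ and that $\Psi_\tria$ is just $\Psi^0_\tria$ corrected by bubbles, together with the additional bubbles $\theta_{T,i}$ ($1\le i\le m$). Since $\W=[L_2(\Omega),H^1_{0,\gamma}(\Omega)]_{s,2}$, by the Riesz--Thorin interpolation theorem it suffices to bound, uniformly in $\tria\in\bbT$, the operator norm of $P_\tria$ on $L_2(\Omega)$ and on $H^1_{0,\gamma}(\Omega)$.

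First I would derive the operator identity
\[
P_\tria = P^0_\tria + Q_\tria(\identity-P^0_\tria),\qquad
Q_\tria v := \sum_{T\in\tria}\sum_{1\le i\le m}\frac{\langle v,\xi_{T,i}\rangle_{L_2(\Omega)}}{|T|}\,\theta_{T,i}.
\]
Indeed, substituting $\psi_{\tria,T,0}=\psi^0_{\tria,T}+\sum_{\{T'\in\tria,\,1\le i'\le m\}}({\bf R}_\tria)_{(T',i'),T}\theta_{T',i'}$ and $\psi_{\tria,T,i}=\theta_{T,i}$ ($i\ge1$) into $P_\tria u=\sum_{\{T\in\tria,\,0\le i\le m\}}|T|^{-1}\langle u,\xi_{T,i}\rangle_{L_2(\Omega)}\psi_{\tria,T,i}$, then interchanging the order of summation in the double sum arising from the $i=0$ terms and using \eqref{defr} together with the definition of $P^0_\tria$, the $i=0$ contribution becomes $P^0_\tria u-Q_\tria(P^0_\tria u)$, which combined with the $i\ge1$ contribution $Q_\tria u$ yields the identity. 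This step requires only biorthogonality of $\Psi_\tria$ and $\Xi_\tria$ and the explicit form \eqref{defr}; notably neither support properties nor $H^k$-bounds on the $\psi_{\tria,T,i}$ enter.

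Next I would show that for $k\in\{0,1\}$ and $v\in L_2(\Omega)$,
\[
\|Q_\tria v\|_{H^k(\Omega)}\lesssim\|h_\tria^{-k}v\|_{L_2(\Omega)}.
\]
This is immediate from the Riesz-basis property \eqref{g1} of $\Theta_\tria$, the equivalences $\|\theta_{T,i}\|_{L_2(\Omega)}\eqsim\|\xi_{T,i}\|_{L_2(\Omega)}\eqsim|T|^{1/2}$ (a consequence of \eqref{g0} and the normalisation of $\Xi_\tria$), and the element-wise Cauchy--Schwarz estimate $|\langle v,\xi_{T,i}\rangle_{L_2(\Omega)}|\le\|v\|_{L_2(T)}\|\xi_{T,i}\|_{L_2(\Omega)}$, since then
\[
\|Q_\tria v\|_{H^k(\Omega)}^2\eqsim\sum_{T\in\tria}\sum_{1\le i\le m}h_T^{-2k}|T|^{-2}\,|\langle v,\xi_{T,i}\rangle_{L_2(\Omega)}|^2\,\|\theta_{T,i}\|_{L_2(\Omega)}^2\lesssim\sum_{T\in\tria}h_T^{-2k}\|v\|_{L_2(T)}^2,
\]
the inner sum over $i$ having fixed cardinality $m$.

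Finally I would assemble the pieces. For the $L_2$-bound, $\|P_\tria u\|_{L_2(\Omega)}\le\|P^0_\tria u\|_{L_2(\Omega)}+\|Q_\tria(\identity-P^0_\tria)u\|_{L_2(\Omega)}\lesssim\|u\|_{L_2(\Omega)}$ by \eqref{12} (applied to $P^0_\tria$, hence also to $\identity-P^0_\tria$) and the case $k=0$ above. For $u\in H^1_{0,\gamma}(\Omega)$, $\|P_\tria u\|_{H^1(\Omega)}\le\|P^0_\tria u\|_{H^1(\Omega)}+\|h_\tria^{-1}(\identity-P^0_\tria)u\|_{L_2(\Omega)}$; the first term is $\lesssim\|u\|_{H^1(\Omega)}$ by Theorem~\ref{theorem1}, and for the second I would invoke the lowest order approximation property of $\W^0_\tria$ already contained in the proof of Theorem~\ref{theorem1}: writing $(\identity-P^0_\tria)u=(\identity-P^0_\tria)(\identity-\Pi_\tria)u$ with the Scott--Zhang quasi-interpolator $\Pi_\tria$ there (whose range, restricted to $H^1_{0,\gamma}(\Omega)$, lies in $\W^0_\tria$, so $P^0_\tria\Pi_\tria u=\Pi_\tria u$), and using \eqref{12}, \eqref{11} with $k=0$, and the bounded overlap of the patches $\omega^{(3)}_\tria(T)$, one gets $\|h_\tria^{-1}(\identity-P^0_\tria)u\|_{L_2(\Omega)}^2\lesssim\sum_{T\in\tria}|u|_{H^1(\omega^{(3)}_\tria(T))}^2\lesssim|u|_{H^1(\Omega)}^2$. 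An application of Riesz--Thorin then completes the proof. The only genuinely new work is the bookkeeping behind the identity $P_\tria=P^0_\tria+Q_\tria(\identity-P^0_\tria)$; everything afterwards is a short computation resting on \eqref{g1} and on facts already available for $P^0_\tria$.
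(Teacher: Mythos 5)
Your proof is correct, but it follows a genuinely different route from the paper's. The paper estimates the new projector $P_\tria$ directly: it bounds $\|\psi_{\tria,T,0}\|_{H^k(\Omega)}$ and the scaling factors (\eqref{f2}--\eqref{f4}), deduces the local estimate \eqref{f5} $\|P_\tria u\|_{H^k(T'')}\lesssim h_{T''}^{-k}\|u\|_{L_2(\omega^{(1)}_\tria(T''))}$ as the analogue of \eqref{10}, and then, since $\W^0_\tria\subset\W_\tria$, literally reruns the Scott--Zhang argument of Theorem~\ref{theorem1} with $P_\tria$ in place of $P^0_\tria$, finishing with Riesz--Thorin. You instead establish the perturbation identity $P_\tria=P^0_\tria+Q_\tria(\identity-P^0_\tria)$, which isolates the higher-order structure in the bubble quasi-interpolant $Q_\tria$; the bound $\|Q_\tria v\|_{H^k(\Omega)}\lesssim\|h_\tria^{-k}v\|_{L_2(\Omega)}$ then follows from \eqref{g0}--\eqref{g1} alone, and the theorem reduces to facts about $P^0_\tria$. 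Your route avoids estimating the corrected functions $\psi_{\tria,T,0}$ altogether and makes transparent that only the lowest-order projector and the Riesz-basis property of the bubbles matter; the paper's route has the advantage that the local bound \eqref{f5} is obtained explicitly, which is reused later (it is exactly what feeds the inverse inequality \eqref{inverseagain} in the subspace-correction section), whereas your argument as written only yields global bounds.

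One small repair: in the $H^1$ step you invoke \eqref{12} (the global $L_2$-bound for $P^0_\tria$) to control $\|h_\tria^{-1}(\identity-P^0_\tria)(\identity-\Pi_\tria)u\|_{L_2(\Omega)}$, but since $h_\tria$ is non-constant on locally refined partitions, a global $L_2$-bound does not commute with the weight. You need the \emph{local} estimate \eqref{10} with $k=0$, i.e. $\|P^0_\tria w\|_{L_2(T)}\lesssim\|w\|_{L_2(\omega^{(1)}_\tria(T))}$, together with \eqref{11} for $k=0$, the comparability of $h_{T'}$ and $h_T$ for neighbouring elements, and the finite overlap of the patches $\omega^{(3)}_\tria(T)$; all of these are available in the paper, so this is a one-line fix rather than a gap. (Also, the inequality $\|P_\tria u\|_{H^1(\Omega)}\le\|P^0_\tria u\|_{H^1(\Omega)}+\|h_\tria^{-1}(\identity-P^0_\tria)u\|_{L_2(\Omega)}$ should carry a $\lesssim$ on the second term, coming from your $k=1$ bound for $Q_\tria$.)
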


\begin{proof}
For $T'' \in \tria$, $k \in \{0,1\}$, from $\supp \xi_{T,i} \subset \overline{T}$ and $\supp \psi_{\tria,T'',i} \subset \omega_\tria^{(1)}(T'')$ we have
\be \label{f1}
\|P_\tria u\|_{H^k(T'')} \leq \sum_{\{T \in R_\tria^{(1)}(T''),\,0 \leq i \leq m\}} \|u\|_{L_2(T)} \frac{\|\psi_{\tria,T,i}\|_{H^k(\Omega)}\|\xi_{T,i}\|_{L_2(\Omega)}}{|\langle \psi_{\tria,T,i},\xi_{T,i}\rangle_{L_2(\Omega)}|}.
\ee
To bound the right-hand side we distinguish between terms with $i=0$ and those with $i>0$.

From $\|\psi_{\tria,T}^0\|_{H^k(\Omega)} \lesssim h_T^{d/2-k}$ (\eqref{e6}), and for $T' \in \tria$, $1 \leq i' \leq m$,
$$ \textstyle
\big\|\frac{\langle \psi^0_{\tria,T}, \xi_{T',i'}\rangle_{L_2(\Omega)}}{\langle \theta_{T',i'}, \xi_{T',i'}\rangle_{L_2(\Omega)}} \theta_{T',i'}\big\|_{H^k(\Omega)}
\lesssim \frac{\|\psi^0_{\tria,T}\|_{L_2(\Omega)}\| \xi_{T',i'}\|_{L_2(\Omega)}}{\|\theta_{T',i'}\|_{L_2(\Omega)}\| \xi_{T',i'}\|_{L_2(\Omega)}} h_{T'}^{-k}\|\theta_{T',i'}\big\|_{L_2(\Omega)}\lesssim h_T^{d/2-k},
$$
we infer that
\be \label{f2}
\|\psi_{\tria,T,0}\|_{H^k(\Omega)} \lesssim h_T^{d/2-k},
\ee
while, thanks to \eqref{e4},
\be \label{f3}
\frac{\|\xi_{T,0}\|_{L_2(\Omega)}}{|\langle \psi_{\tria,T,0},\xi_{T,0}\rangle_{L_2(\Omega)}|}=
\frac{\|\xi_{T,0}\|_{L_2(\Omega)}}{|\langle \psi_{\tria,T}^0,\xi_{T,0}\rangle_{L_2(\Omega)}|}=
\frac{\|\xi_{T,0}\|_{L_2(\Omega)}}{|\langle \1,\xi_{T,0}\rangle_{L_2(\Omega)}|} \eqsim h_T^{-d/2}.
\ee

For $1 \leq i \leq m$,
\be \label{f4}
\frac{\|\psi_{\tria,T,i}\|_{H^k(\Omega)}\|\xi_{T,i}\|_{L_2(\Omega)}}{|\langle \psi_{\tria,T,i},\xi_{T,i}\rangle_{L_2(\Omega)}|}
=
\frac{\|\theta_{T,i}\|_{H^k(\Omega)}\|\xi_{T,i}\|_{L_2(\Omega)}}{|\langle \theta_{T,i},\xi_{T,i}\rangle_{L_2(\Omega)}|}
\eqsim 
\frac{\|\theta_{T,i}\|_{H^k(\Omega)}}{\|\theta_{T,i}\|_{L_2(\Omega)}} \lesssim h_T^{-k}.
\ee

From \eqref{f1}-\eqref{f4}, we conclude that
\be \label{f5}
\|P_\tria u\|_{H^k(T'')} \lesssim h_T^{-k} \|u\|_{L_2(\omega_\tria^{(1)}(T''))},
\ee
which is the analogue of estimate \eqref{10} that was proven for $P_\tria^0$.
Since $\W_\tria^0 \subset \W_\tria$, by making use of the \emph{same} Scott-Zhang type quasi-interpolator $\Pi_\tria$ as has been used in the proof of Theorem~\ref{theorem1}, copying the remainder of that proof  yields the claim.
\end{proof}

Thanks to above theorem, $D_\tria\colon \V_\tria \rightarrow \W_\tria'$ defined by $(D_\tria v)(w)=\langle v,w\rangle_{L_2(\Omega)}$
satisfies $\sup_{\tria \in \bbT}\max\big(\|D_\tria\|_{\cL(\V_\tria,\W_\tria')},\|D^{-1}_\tria\|_{\cL(\W_\tria',\V_\tria)}\big)<\infty$.
By enumerating all $\xi_{T,i}$ and $\psi_{\tria,T,i}$ with index $i=0$ before those with index $i>0$, its
matrix representation reads as 
${\bf D}_\tria=\left[\begin{array}{@{}cc@{}} {\bf D}_\tria^0 & 0 \\ 0 & {\bf D}_\tria^1\end{array} \right]$, where 
${\bf D}_\tria^1:=\diag\{|T| \identity_{m \times m} \colon T \in \tria\}$.

Thanks to \eqref{g1}-\eqref{g2}, a suitable $B_\tria \in \cLis(\W_\tria,\W_\tria')$ can be defined similarly as in \S\ref{SB}:
With $I_\tria^\Ss$ being the linear projector defined on $\Ss_{\tria,0}^{0,1} \oplus \B_\tria$ by $\ran I_\tria^\Ss=\Ss^{0,1}_{\tria,0}$ and $\ran  I_\tria^\B  =\B_\tria$, where $I_\tria^\B:=\identity-I_\tria^\Ss$,
we define
$B^{\Ss\oplus\B}_\tria\in \cLis(\Ss^{0,1}_{\tria,0}\oplus \B_\tria,(\Ss^{0,1}_{\tria,0} \oplus \B_\tria)')$ by
 $$
 (B^{\Ss\oplus\B}_\tria w)(\tilde w)=(B^{\Ss}_\tria I_\tria^\Ss w)(I_\tria^\Ss \tilde w)+(B^\B_\tria I_\tria^\B w)(I_\tria^\B \tilde w),
 $$
 where
 $$
 (B_\tria^\B \sum_{\{T \in \tria,\,0\leq i \leq m\}} c_{T,i} \theta_{\tria,i})(\sum_{\{T \in \tria,\,0\leq i \leq m\}} d_{T,i} \theta_{\tria,i}):=\beta \sum_{\{T \in \tria,\,0\leq i \leq m\}} h_T^{d-2s} c_{T,i} d_{T,i},
 $$
and $B_\tria^\Ss$ is as in \S\ref{SB}, 
 and define
$B_\tria \in \cLis(\W_\tria,\W_\tria')$ by $(B_\tria w)(\tilde w)=(B^{\Ss\oplus\B}_\tria w)(\tilde w)$.

Using that with ${\bf R}_\tria$ as defined in \eqref{defr}, $\left[\begin{array}{@{}cc@{}} \identity & 0 \\ {\bf R}_\tria & \identity \end{array} \right]$ is the basis transformation from  
$\Psi_\tria$ to $\Psi_\tria^0 \cup \{\theta_{T,i}\colon T \in \tria,\,1 \leq i \leq m\}$,
one infers that the representation of the resulting uniform preconditioner reads as
\be \label{precondhigh}
{\bf G}_\tria=
\left[\begin{array}{@{}cc@{}} {\bf D}_\tria^0 & 0 \\ 0 & {\bf D}_\tria^1\end{array} \right]^{-1}
\left[\begin{array}{@{}cc@{}} \identity & {\bf R}_\tria^\top \\ 0 & \identity \end{array} \right]
\left[\begin{array}{@{}cc@{}} {\bf B}_\tria^0 & 0 \\ 0 & \beta({\bf D}_\tria^1)^{1-\frac{2s}{d}}\end{array} \right]
\left[\begin{array}{@{}cc@{}} \identity & 0 \\  {\bf R}_\tria & \identity \end{array} \right]
\left[\begin{array}{@{}cc@{}} {\bf D}_\tria^0 & 0 \\ 0 & {\bf D}_\tria^1\end{array} \right]^{-1},
\ee
which is thus independent of the particular bubbles $\Theta_\tria$ being chosen.

\subsubsection{Continuous piecewise polynomials} \label{Scontinuous}
Given $\ell>1$, for $\tria \in \bbT$, let $\V_\tria=\Ss_\tria^{0,\ell}$. 
We equip $\V_\tria$ with a basis $\Xi_\tria=\Xi_\tria^0 \cup (\Xi_\tria \setminus \Xi_\tria^0)$, where for each $T \in \tria$, the set of restrictions to $T$ of those basis functions that do not identically vanish on $T$ is a lifted version of a fixed basis for the polynomials of degree $\ell$ on a reference $d$-simplex under an affine bijection; the support of each basis function $\xi \in \Xi_\tria$ is connected and extends to a uniformly bounded number of  $T \in \tria$; and finally, $\|\xi\|_{L_2(\Omega)}\eqsim |\supp \xi|^{\frac12} \eqsim |T|^{\frac12}$ for some $T \in \tria$ with $\supp \xi \cap T \neq \emptyset$.

Similar to the previous \S\ref{Sdiscontinuous}, a biorthogonal collection $\Theta_\tria=\{\theta(\xi)\colon \xi \in \Xi_\tria\}$ of `bubbles' exists that has properties 
analogous to \eqref{g0}-\eqref{g3}, reading $|T|$ as $|\supp \xi|$, and $h_T$ as $|\supp \xi|^{1/d}$.
Writing the collection $\Psi^0_\tria$ as found in \S\ref{Slinears} as $\{\psi^0(\xi)\colon \xi \in \Xi_\tria^0\}$, we define $\Psi_\tria=\{\psi(\xi)\colon \xi \in \Xi_\tria\}$, biorthogonal to $\Xi_\tria$, and $\W_\tria:=\Span \Psi_\tria$ by
$$
\psi(\xi):=\left\{
\begin{array}{ll} \psi^0(\xi)-\sum_{\xi' \in \Xi_\tria \setminus \Xi_\tria^0} \frac{\langle \psi^0(\xi), \xi'\rangle_{L_2(\Omega)}}{\langle \theta(\xi'),\xi' \rangle_{L_2(\Omega)}} \theta(\xi') & \xi \in \Xi_\tria^0,\\
\theta(\xi)& \xi \in \Xi_\tria \setminus \Xi_\tria^0.
\end{array}
\right.
$$
Theorem~\ref{thm10} extends to the current setting and shows that the corresponding biorthogonal projector is uniformly bounded. The representation of resulting uniform preconditioner reads as \eqref{precondhigh}, obviously now with ${\bf D}_\tria^0$ and ${\bf B}_\tria^0$ as found in the continuous piecewise linear case, and the matrix 
${\bf D}_\tria^1:=\diag\{|\supp \xi|\colon \xi \in \Xi_\tria \setminus \Xi_\tria^0\}$, and $({\bf R}_\tria^1)_{\xi,\nu}:=-|\supp \xi|^{-1}\langle\phi_{\tria,\nu},\xi\rangle_{L_2(\Omega)}$ for $(\xi,\nu) \in (\Xi_\tria \setminus \Xi_\tria^0) \times N^0_\tria$, and $({\bf R}_\tria^1)_{\xi,\nu}=0$ for $\nu \in N_\tria\setminus N^0_\tria$ (recall $\# \Xi_\tria^0=\# N_\tria$).

\subsection{Application of a subspace correction framework}
For $\V_\tria=\Ss_\tria^{-1,\ell}$, in \S\ref{Sdiscontinuous} we have demonstrated existence of a biorthogonal projector that satisfies \eqref{f5}.
In  \S\ref{Scontinuous} we have shown that a similar result holds true for $\V_\tria=\Ss_\tria^{0,\ell}$.
From the proof of Lemma~\ref{inverse} we learn that this implies that for either choice of  $\V_\tria$,
\be \label{inverseagain}
\|h_\tria v_\tria\|_{L_2(\Omega)} \lesssim \|v_\tria\|_{(H_{0,\gamma}^1)'}\quad (v_\tria \in \V_\tria).
\ee
Using this inverse inequality, we are going to decompose $\V_\tria$ in a uniformly stable way into $\V_\tria^0$
and a complement space on which the $\|\,\|_\V$-norm is equivalent to a scaled $L_2(\Omega)$-norm.

\begin{proposition} \label{prop11} Let $Q_\tria^0 \in \cL(L_2(\Omega),L_2(\Omega))$ be a projector with $\ran Q_\tria^0=\V^0_\tria$, \\
$\sup_{\tria \in \bbT} \|Q_\tria^0\|_{\cL(L_2(\Omega),L_2(\Omega))}<\infty$,
and
$$
\sup_{\tria \in \bbT} \|h_\tria^{-1}(\identity-(Q_\tria^0)^*)\|_{\cL(H^1_{0,\gamma}(\Omega),L_2(\Omega))}<\infty.
$$
Then $\sup_{\tria \in \bbT} \|Q_\tria^0|_{\V_\tria}\|_{\cL(\V_\tria,\V_\tria)}<\infty$, and
\be \label{equiv}
\|h_\tria^s \cdot\|_{L_2(\Omega)} \eqsim \|\cdot\|_{\V}\quad \text{on } \V_\tria^1:=\ran(\identity-Q_\tria^0)|_{\V_\tria}.
\ee
\end{proposition}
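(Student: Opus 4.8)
The plan is to carry everything out with the Gelfand–triple duality pairing: for $v_\tria\in\V_\tria\subset L_2(\Omega)$ one has $\|v_\tria\|_\V=\sup_{0\neq w\in\W}\langle v_\tria,w\rangle_{L_2(\Omega)}/\|w\|_\W$, and, writing $(Q_\tria^0)^*$ for the $L_2(\Omega)$-adjoint, $\langle Q_\tria^0 v_\tria,w\rangle_{L_2(\Omega)}=\langle v_\tria,w\rangle_{L_2(\Omega)}-\langle v_\tria,(\identity-(Q_\tria^0)^*)w\rangle_{L_2(\Omega)}$. Both assertions will follow once two uniform estimates are in place: a \emph{weighted inverse inequality} $\|h_\tria^s v_\tria\|_{L_2(\Omega)}\lesssim\|v_\tria\|_\V$ on $\V_\tria$, and the bound $\|h_\tria^{-s}(\identity-(Q_\tria^0)^*)w\|_{L_2(\Omega)}\lesssim\|w\|_\W$ on $\W$ --- the latter being nothing but the interpolant of the two hypotheses on $Q_\tria^0$.

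First I would establish the weighted inverse inequality. Let $P_\tria$ be the biorthogonal projector constructed in \S\ref{Sdiscontinuous} (resp.\ \S\ref{Scontinuous}), so $\ran(\identity-P_\tria)=\V_\tria^{\perp_{L_2(\Omega)}}$, and, summing \eqref{f5} over $T\in\tria$ and invoking the $K$-mesh property, $\|P_\tria u\|_{L_2(\Omega)}\lesssim\|u\|_{L_2(\Omega)}$ and $\|P_\tria u\|_{H^1_{0,\gamma}(\Omega)}\lesssim\|h_\tria^{-1}u\|_{L_2(\Omega)}$. Interpolating these two bounds on the operator $P_\tria$ --- a weighted $L_2$-interpolation whose constant depends only on $s$ --- gives $\|P_\tria u\|_\W\lesssim\|h_\tria^{-s}u\|_{L_2(\Omega)}$. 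Then, arguing exactly as in the proof of Lemma~\ref{inverse} with $h_\tria$ replaced by $h_\tria^s$ and using $\langle v_\tria,\,h_\tria^{2s}v_\tria-P_\tria(h_\tria^{2s}v_\tria)\rangle_{L_2(\Omega)}=0$,
$$\|v_\tria\|_\V\;\geq\;\frac{\langle v_\tria,P_\tria(h_\tria^{2s}v_\tria)\rangle_{L_2(\Omega)}}{\|P_\tria(h_\tria^{2s}v_\tria)\|_\W}\;=\;\frac{\|h_\tria^s v_\tria\|_{L_2(\Omega)}^2}{\|P_\tria(h_\tria^{2s}v_\tria)\|_\W}\;\gtrsim\;\|h_\tria^s v_\tria\|_{L_2(\Omega)}.$$

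Next, the two hypotheses say precisely that $\identity-(Q_\tria^0)^*$ is bounded $L_2(\Omega)\to L_2(\Omega)$ (since $\|\identity-(Q_\tria^0)^*\|_{\cL(L_2(\Omega),L_2(\Omega))}\leq 1+\|Q_\tria^0\|_{\cL(L_2(\Omega),L_2(\Omega))}$, uniformly bounded by assumption) and that $h_\tria^{-1}(\identity-(Q_\tria^0)^*)$ is bounded $H^1_{0,\gamma}(\Omega)\to L_2(\Omega)$, both uniformly in $\tria$; the same weighted $L_2$-interpolation then yields $\|h_\tria^{-s}(\identity-(Q_\tria^0)^*)w\|_{L_2(\Omega)}\lesssim\|w\|_\W$ for $w\in\W$. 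With these two estimates the proof concludes quickly. For the first claim, in $\langle Q_\tria^0 v_\tria,w\rangle_{L_2(\Omega)}=\langle v_\tria,w\rangle_{L_2(\Omega)}-\langle v_\tria,(\identity-(Q_\tria^0)^*)w\rangle_{L_2(\Omega)}$ the first term is $\leq\|v_\tria\|_\V\|w\|_\W$ and, by Cauchy--Schwarz and the two displayed estimates, the second is $\leq\|h_\tria^s v_\tria\|_{L_2(\Omega)}\,\|h_\tria^{-s}(\identity-(Q_\tria^0)^*)w\|_{L_2(\Omega)}\lesssim\|v_\tria\|_\V\|w\|_\W$; taking the supremum over $w$ gives $\sup_{\tria\in\bbT}\|Q_\tria^0|_{\V_\tria}\|_{\cL(\V_\tria,\V_\tria)}<\infty$, whence $\V_\tria=\V_\tria^0\oplus\V_\tria^1$ is a uniformly $\|\cdot\|_\V$-stable splitting. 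For \eqref{equiv}, the bound `$\lesssim$' is the weighted inverse inequality applied to $v_\tria^1\in\V_\tria^1\subset\V_\tria$; for `$\gtrsim$', using that $(\identity-Q_\tria^0)v_\tria^1=v_\tria^1$ for $v_\tria^1\in\V_\tria^1$ one writes $\langle v_\tria^1,w\rangle_{L_2(\Omega)}=\langle v_\tria^1,(\identity-(Q_\tria^0)^*)w\rangle_{L_2(\Omega)}\leq\|h_\tria^s v_\tria^1\|_{L_2(\Omega)}\,\|h_\tria^{-s}(\identity-(Q_\tria^0)^*)w\|_{L_2(\Omega)}$ and takes the supremum over $w$.

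The part needing the most care --- rather than being deep --- is making the two interpolation steps rigorous: that $P_\tria$ and $\identity-(Q_\tria^0)^*$ are genuine bounded operators between the \emph{full} spaces $L_2(\Omega)$ and $H^1_{0,\gamma}(\Omega)$ at both endpoints, so that the real interpolation theorem for operators applies (and not merely the equivalence of norms on the finite-dimensional $\V_\tria$), and that the weighted $L_2$-interpolation identity underlying the estimates $\|P_\tria u\|_\W\lesssim\|h_\tria^{-s}u\|_{L_2(\Omega)}$ and $\|h_\tria^{-s}(\identity-(Q_\tria^0)^*)w\|_{L_2(\Omega)}\lesssim\|w\|_\W$ holds with a constant that is independent of $\tria$ (it depends only on $s$). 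Everything else is bookkeeping with the $L_2(\Omega)$-pairing, the projector identities $P_\tria^2=P_\tria$ and $(Q_\tria^0)^2=Q_\tria^0$, and the inclusion $\V_\tria^0\subset\V_\tria$.
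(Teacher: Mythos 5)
Your argument is correct, but it is organized differently from the paper's proof. The paper proves the key estimate only at the endpoint $s=1$: for $u \in \V_\tria$ it bounds $\|(\identity-Q^0_\tria)u\|_{H^1_{0,\gamma}(\Omega)'}$ by pairing with $(\identity-(Q^0_\tria)^*)v$, applying Cauchy--Schwarz with the weight $h_\tria$ and the hypothesis, and then invoking the unweighted inverse inequality \eqref{inverseagain}; both the uniform boundedness of $Q^0_\tria|_{\V_\tria}$ and the norm equivalence \eqref{equiv} are then obtained for general $s\in[0,1]$ ``by interpolation'' of these $\tria$-dependent statements. You instead work at fixed $s$ throughout: you first interpolate the two mapping properties of the biorthogonal projector $P_\tria$ (from \eqref{f5}) to get $\|P_\tria u\|_\W \lesssim \|h_\tria^{-s}u\|_{L_2(\Omega)}$, deduce the weighted inverse inequality $\|h_\tria^s v_\tria\|_{L_2(\Omega)}\lesssim\|v_\tria\|_\V$ by the same duality trick as in Lemma~\ref{inverse}, interpolate the two hypotheses on $\identity-(Q^0_\tria)^*$ to get $\|h_\tria^{-s}(\identity-(Q^0_\tria)^*)w\|_{L_2(\Omega)}\lesssim\|w\|_\W$, and then both assertions follow by pure duality/Cauchy--Schwarz, with the projector identity giving $(\identity-Q^0_\tria)v=v$ on $\V^1_\tria$. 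What your route buys is that all interpolation is done for genuine operators between the full spaces $L_2(\Omega)$, $H^1_{0,\gamma}(\Omega)$ and weighted $L_2$-spaces, so you avoid the subtlety, left implicit in the paper, of interpolating statements on the $\tria$-dependent finite-dimensional spaces $\V_\tria$ and $\V^1_\tria$ (where one must know that the induced interpolation norms agree uniformly with $\|\cdot\|_\V$, a point made delicate by the footnote that $Q^0_\tria\notin\cL(\V,\V)$ for $s\geq\frac12$). The price, which you correctly identify as the step needing care, is the Stein--Weiss-type identity $[L_2(\Omega),L_2(\Omega;h_\tria^{-2}dx)]_{s,2}=L_2(\Omega;h_\tria^{-2s}dx)$ with constants independent of the (piecewise constant, $\tria$-dependent) weight; this is a standard fact for weighted $L_2$-couples, so the argument is complete. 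The paper's proof is shorter and reuses only \eqref{inverseagain}; yours is more self-contained at the level of uniformity in $\tria$, and as a by-product establishes the weighted inverse inequality $\|h_\tria^s v_\tria\|_{L_2(\Omega)}\lesssim\|v_\tria\|_\V$ on $\V_\tria$ for all $s\in[0,1]$, which the paper never states explicitly.
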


\begin{proof} For $u \in \V_\tria$, thanks to \eqref{inverseagain} we have
\begin{align} \label{firstline}
\|(\identity - Q^0_\tria)u\|_{H^1_{0,\gamma}(\Omega)'}&=\sup_{v \in H^1_{0,\gamma}(\Omega)} \frac{\langle u,(\identity - (Q^0_\tria)^*)v\rangle_{L_2(\Omega)}}{\|v\|_{H^1(\Omega)}} \lesssim 
\|h_\tria u\|_{L_2(T)}\\ \nonumber
& \lesssim \|u\|_{H^1_{0,\gamma}(\Omega)'},
\end{align}
so that the first statement follows by interpolation.

Again by interpolation, the second statement needs only to be proven for $s=1$. For that case it follows from \eqref{firstline} and \eqref{inverseagain}.
 \end{proof}
 
 Next we use the decomposition $\V_\tria=\V^0_\tria \oplus \V^1_\tria$ from Proposition~\ref{prop11} to build a preconditioner on $\V_\tria$ from preconditioners on the subspaces.

\begin{proposition} In the situation of Proposition~\ref{prop11}, for $i=0,1$, let $I^i_\tria$ denote the embedding of $\V_\tria^i$ into $\V_\tria$, and let $G_\tria^i \in \cLis((\V_\tria^i)',\V_\tria^i)$.
Then $G_\tria:=\sum_{i=0}^1 I^i_\tria G_\tria^i (I_\tria^i)' \in \cLis((\V_\tria)',\V_\tria)$ with
\begin{align*}
\|G_\tria\|_{\cL(\V_\tria',\V_\tria)} &\leq 2 \max_i \|G^i_\tria\|_{\cL((\V^i_\tria)',\V^i_\tria)},\\
\|\Re(G_\tria)^{-1}\|_{\cL(\V_\tria,\V'_\tria)} &\leq 2 \|Q_\tria^0|_{\V_\tria}\|^2_{\cL(\V_\tria,\V_\tria)}\max_i \|\Re(G^i_\tria)^{-1}\|_{\cL(\V^i_\tria,(\V^i_\tria)')}
\end{align*}
\end{proposition}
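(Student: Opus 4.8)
The plan is to recognize $G_\tria=\sum_{i=0}^1 I^i_\tria G^i_\tria (I^i_\tria)'$ as an \emph{additive Schwarz} operator for the (uniformly) stable splitting $\V_\tria=\V^0_\tria\oplus\V^1_\tria$ supplied by Proposition~\ref{prop11}, and to run the standard additive Schwarz estimates in basis-free form. I would first record the bookkeeping: for $f\in\V_\tria'$ put $f_i:=(I^i_\tria)'f\in(\V^i_\tria)'$, which is just the restriction of $f$ to $\V^i_\tria$, so that $\|f_i\|_{(\V^i_\tria)'}\le\|f\|_{\V_\tria'}$ because $\V^i_\tria\subset\V_\tria$. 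Since each $I^i_\tria$ is an isometric embedding of $(\V^i_\tria,\|\cdot\|_\V)$ into $(\V_\tria,\|\cdot\|_\V)$, the first estimate is immediate from the triangle inequality: $\|G_\tria f\|_\V\le\sum_i\|G^i_\tria f_i\|_\V\le\sum_i\|G^i_\tria\|_{\cL((\V^i_\tria)',\V^i_\tria)}\|f\|_{\V_\tria'}\le 2\max_i\|G^i_\tria\|_{\cL((\V^i_\tria)',\V^i_\tria)}\|f\|_{\V_\tria'}$. For the coercivity part I would use the identity $(G_\tria f)(f)=\sum_{i=0}^1(G^i_\tria f_i)(f_i)$, which follows from $(I^i_\tria G^i_\tria(I^i_\tria)'f)(f)=(G^i_\tria f_i)((I^i_\tria)'f)=(G^i_\tria f_i)(f_i)$, combined with the coercivity of each $G^i_\tria$ (recalled in Section~\ref{Soper}): $(G^i_\tria f_i)(f_i)\ge\|\Re(G^i_\tria)^{-1}\|_{\cL(\V^i_\tria,(\V^i_\tria)')}^{-1}\|f_i\|^2_{(\V^i_\tria)'}$. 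Summing, $(G_\tria f)(f)\ge\big(\max_i\|\Re(G^i_\tria)^{-1}\|_{\cL(\V^i_\tria,(\V^i_\tria)')}\big)^{-1}\big(\|f_0\|^2_{(\V^0_\tria)'}+\|f_1\|^2_{(\V^1_\tria)'}\big)$.

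The heart of the argument is to bound $\|f\|_{\V_\tria'}^2$ from above by a constant times $\|f_0\|^2_{(\V^0_\tria)'}+\|f_1\|^2_{(\V^1_\tria)'}$. For this I would test against an arbitrary $w\in\V_\tria$ and split it by the Proposition~\ref{prop11} projector: $w=w_0+w_1$ with $w_0:=Q^0_\tria w\in\V^0_\tria$ and $w_1:=(\identity-Q^0_\tria)w\in\V^1_\tria$. Then $|f(w)|\le|f_0(w_0)|+|f_1(w_1)|\le\big(\|f_0\|^2_{(\V^0_\tria)'}+\|f_1\|^2_{(\V^1_\tria)'}\big)^{1/2}\big(\|w_0\|_\V^2+\|w_1\|_\V^2\big)^{1/2}$ by Cauchy--Schwarz in $\R^2$. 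Since $\V$ is a Hilbert space, being the dual of the Hilbert space $\W=[L_2,H^1_{0,\gamma}]_{s,2}$, the finite-dimensional subspace $\V_\tria$ is Hilbert as well, and the nontrivial bounded idempotent $Q^0_\tria|_{\V_\tria}$ then satisfies $\|(\identity-Q^0_\tria)|_{\V_\tria}\|_{\cL(\V_\tria,\V_\tria)}\le\|Q^0_\tria|_{\V_\tria}\|_{\cL(\V_\tria,\V_\tria)}$; hence $\|w_0\|_\V^2+\|w_1\|_\V^2\le 2\|Q^0_\tria|_{\V_\tria}\|^2_{\cL(\V_\tria,\V_\tria)}\|w\|_\V^2$. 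Taking the supremum over $0\neq w\in\V_\tria$ gives $\|f\|_{\V_\tria'}^2\le 2\|Q^0_\tria|_{\V_\tria}\|^2_{\cL(\V_\tria,\V_\tria)}\big(\|f_0\|^2_{(\V^0_\tria)'}+\|f_1\|^2_{(\V^1_\tria)'}\big)$.

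Combining the two previous displays yields, for every $f\in\V_\tria'$, $(G_\tria f)(f)\ge\big(2\|Q^0_\tria|_{\V_\tria}\|^2_{\cL(\V_\tria,\V_\tria)}\max_i\|\Re(G^i_\tria)^{-1}\|_{\cL(\V^i_\tria,(\V^i_\tria)')}\big)^{-1}\|f\|_{\V_\tria'}^2$; in particular $G_\tria$ is coercive, so $G_\tria\in\cLis(\V_\tria',\V_\tria)$, and applying the identity $\|\Re(C)^{-1}\|_{\cL(\Y',\Y)}=\big(\inf_{0\neq y}(Cy)(y)/\|y\|_\Y^2\big)^{-1}$ from Section~\ref{Soper} with $\Y=\V_\tria'$ and $C=G_\tria$ produces the second estimate. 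The one delicate point — the place I expect to be the main obstacle — is obtaining the \emph{sharp} constant $2\|Q^0_\tria|_{\V_\tria}\|^2$ in the lower bound: a crude estimate via $\|(\identity-Q^0_\tria)|_{\V_\tria}\|\le 1+\|Q^0_\tria|_{\V_\tria}\|$ would only give a constant of the shape $\|Q^0_\tria|_{\V_\tria}\|^2+(1+\|Q^0_\tria|_{\V_\tria}\|)^2$, so one really must invoke the Hilbert-space fact that a nontrivial bounded idempotent $P$ satisfies $\|P\|=\|\identity-P\|$; the rest is routine additive-Schwarz bookkeeping, and the uniformity in $\tria$ is exactly the content $\sup_{\tria\in\bbT}\|Q^0_\tria|_{\V_\tria}\|_{\cL(\V_\tria,\V_\tria)}<\infty$ guaranteed by Proposition~\ref{prop11}.
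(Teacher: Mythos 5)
Your proof is correct and is essentially the paper's argument: the paper simply cites the general additive subspace correction theory together with the stability of the splitting $\V_\tria=\V^0_\tria\oplus\V^1_\tria$ and the same Kato/Xu--Zikatanov identity $\|Q^0_\tria|_{\V_\tria}\|=\|(\identity-Q^0_\tria)|_{\V_\tria}\|$, which is exactly what you invoke to get the constant $2\|Q^0_\tria|_{\V_\tria}\|^2$. You merely unfold the abstract additive Schwarz estimates explicitly (upper bound by the triangle inequality, lower bound via $(G_\tria f)(f)=\sum_i(G^i_\tria f_i)(f_i)$ and the stable decomposition of test functions), so no substantive difference.
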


\begin{proof} The result follows as an easy case from the general theory of (additive) subspace correction methods (e.g.~\cite{242.5} + references cited there), together with the inequalities
$$
\frac12 \|\cdot\|_{\V}^2 \leq \|Q_\tria^0 \cdot\|_{\V}^2+\|(\identity-Q_\tria^0)\cdot\|_{\V}^2 \leq 2  \|Q_\tria^0\|_{\cL(\V_\tria,\V_\tria)} \|\cdot\|_{\V}^2 \quad \text{on }\V_\tria,
$$
where we used that $\|(\identity-Q_\tria^0)|_{\V_\tria}\|_{\cL(\V_\tria,\V_\tria)}=\|Q_\tria^0|_{\V_\tria}\|_{\cL(\V_\tria,\V_\tria)}$, because $Q_\tria^0|_{\V_\tria}$ is a projector unequal to both the zero map and the identity (\cite{169.5,315.7})
\end{proof}

On $\V^0_\tria$ we already have our uniform preconditioner $G_\tria^0$ available, so it remains to construct such a preconditioner on the complement space $\V_\tria^1$.
In the situation of Proposition~\ref{prop11},  
 let $\Xi_\tria=\Xi^0_\tria \cup (\Xi_\tria\setminus \Xi_\tria^0)$ be a basis for $\V_\tria$ such that $\Xi_\tria^1:=\Xi_\tria\setminus \Xi_\tria^0$ is a basis for $\V_\tria^1$  for which 
 \be \label{basiscondition}
\|h_\tria^s\sum_{\xi \in \Xi^1_\tria}  c_\xi \xi\|_{L_2(\Omega)}^2 \eqsim \sum_{\xi \in \Xi^1_\tria} |c_\xi|^2 \|h_\tria^s \xi\|_{L_2(\Omega)}^2.
\ee
Then 
$$
(H_\tria^1 \sum_{\xi \in \Xi_\tria^1} c_\xi \xi)(\sum_{\xi \in \Xi_\tria^1} d_\xi \xi):=\sum_{\xi \in \Xi_\tria^1} c_\xi d_\xi \|h_\tria^s \xi\|_{L_2(\Omega)}^2
$$
satisfies $\sup_{\tria \in \bbT}\max(\|H^1_\tria\|_{\cL(\V^1_\tria,(\V^1_\tria)')}, \|\Re(H^1_\tria)^{-1}\|_{\cL((\V^1_\tria)',\V_\tria)})<\infty$ thanks to \eqref{equiv}, and so $G^1_\tria:=(H_\tria^1)^{-1}$ is a suitable choice.
The implementation of the resulting uniform preconditioner $G_\tria$ reads as
\be \label{33}
{\bf G}_\tria=\left[\begin{array}{@{}cc@{}} {\bf G}_\tria^0 & 0 \\ 0 & \diag\{\|h_\tria^s \xi\|_{L_2(\Omega)}^{-2}\colon \xi \in \Xi_\tria^1\}\end{array} \right].
\ee

What remains is,  for both options for $\V_\tria$, to specify a $Q_\tria^0$ that satisfies the conditions from Proposition~\ref{prop11}, and to equip $\V_\tria$ with a basis $\Xi_\tria$ that is the union of the basis $\Xi_\tria^0$ for $\V_\tria^0$, and a basis  $\Xi_\tria^1$ for  $\V_\tria^1=\ran(\identity-Q_\tria^0)|_{\V_\tria}$, the latter being \emph{uniformly stable} w.r.t. $\|h_\tria^s \cdot\|_{L_2(\Omega)}$, i.e., one that satisfies \eqref{basiscondition}.

\subsubsection{Discontinuous piecewise polynomials} \label{discontinuoussubspacecorrection}
For $\V_\tria=\Ss_\tria^{-1,\ell}$, the conditions of Proposition~\ref{prop11} are fulfilled by taking $Q_\tria^0$ to be the $L_2(\Omega)$-orthogonal projector onto $\V_\tria^0=\Ss_\tria^{-1,0}$.
\footnote{Notice that for $s \geq \frac12$, $Q_\tria^0 \not\in \cL(\V,\V)$, so taking its restriction to $\V_\tria$  is essential  in Proposition~\ref{prop11}.}

By taking $\Xi_\tria=\{\xi_{T,i}\}_{T \in \tria,0\leq i \leq m}$ to be an $L_2(\Omega)$-orthogonal basis for $\V_\tria$ such that
$\xi_{\tria,0}=\xi_T$ and $\supp \xi_{\tria,i} \subset \overline{T}$, \eqref{basiscondition} is valid.

Remarkably, with these specifications and by scaling $\|\xi_{\tria,i}\|_{L_2(\Omega)}=|T|^{\frac12}$, 
 the resulting ${\bf G}_\tria$ is given by \eqref{precondhigh} with ${\bf R}_\tria$ reading as the zero map (${\bf R}_\tria$ from \eqref{defr} is non-zero).

\subsubsection{Continuous piecewise polynomials} 
Let $\V_\tria=\Ss_\tria^{0,\ell}$ for $\ell>1$.
It is no option to take $Q_\tria^0$ to be the $L_2(\Omega)$-orthogonal projector onto $\V_\tria^0=\Ss_\tria^{0,1}$
because in that case we will not be able to equip $\ran(\identity-Q_\tria^0)|_{\V_\tria}$ with a locally supported basis.

From \eqref{biorthprojlin} recall the biorthogonal projector $P_\tria^0$ onto $\W_\tria^0$ with $\ran(\identity-P_\tria^0)=\V_\tria^0$. Writing $(\identity-P_\tria^0)=(\identity-P_\tria^0)(\identity-\Pi_\tria^0)$ with $\Pi_\tria^0$ being a Scott-Zhang type interpolator, one shows that 
$\sup_{\tria \in \bbT} \|h_\tria^{-1}(\identity-P_\tria^0)\|_{\cL(H^1_{0,\gamma}(\Omega),L_2(\Omega))}<\infty$.
Since furthermore $\sup_{\tria \in \bbT} \|P_\tria^0\|_{\cL(L_2(\Omega),L_2(\Omega))}<\infty$, the conditions of Proposition~\ref{prop11} are satisfied by
taking $Q_\tria^0:=(P_\tria^0)^*=u \mapsto\sum_{\nu \in N_\tria} \frac{ (d+1)\langle u, \psi^0_{\tria,\nu}\rangle_{L_2(\Omega)}}{|\omega_\tria(\nu)|}\xi^0_{\tria,\nu}$.

Let us denote the \emph{weighted} $L_2(\Omega)$-norm $\|h_\tria^s \cdot\|_{L_2(\Omega)}$ by $\nrm\cdot\nrm$.
We need to equip $\V_\tria^1=\ran(\identity-Q_\tria^0)|_{\V_\tria}$ with a basis that is uniformly stable w.r.t. $\nrm\cdot\nrm$. Since the supports of the basis functions will extend to multiple $T \in \tria$, this task is more complex than for the discontinuous piecewise polynomial case. Let $\Xi_\tria=\Xi^0_\tria \cup (\Xi_\tria\setminus \Xi^0_\tria)$ be a basis for $\V_\tria$ such that for each $T \in \tria$,
$\{\xi|_T\colon \xi \in \Xi,\, \xi|_T \not\equiv 0\}$ is a uniformly $L_2(T)$-stable basis for its span.
Common affine equivalent constructions yield such a basis.
Then $\Xi_\tria$ is stable w.r.t. $\nrm\cdot\nrm$, uniformly in $\tria \in \bbT$, and so in particular, with $\bar{\V}_\tria^1:=\Span \Xi_\tria\setminus \Xi^0_\tria$, $\V_\tria=\V_\tria^0 \oplus \bar{\V}_\tria^1$ is a uniformly stable decomposition w.r.t. $\nrm\cdot\nrm$.

Corresponding to this decomposition, for $v \in \V_\tria$ we write $v=v^0+\bar{v}^1$.
Taking $v \in \V_\tria^1$, it holds that 
$0=Q_\tria^0 v=Q_\tria^0 v^0+Q_\tria^0 \bar{v}^1=v^0+Q_\tria^0 \bar{v}^1$,
or $v^0=-Q_\tria^0 \bar{v}^1$ i.e. $v=(\identity-Q_\tria^0)\bar{v}^1$, showing that $\identity-Q_\tria^0 \colon \bar{\V}_\tria^1 \rightarrow \V_\tria^1$ is surjective. Injectivity of this map follows from $\nrm v\nrm \eqsim \nrm Q_\tria^0 \bar{v}^1\nrm+\nrm \bar{v}^1\nrm \geq \nrm \bar{v}^1\nrm$, and bounded invertibility, uniformly in $\tria$, will follow from 
$Q_\tria^0$ being uniformly bounded w.r.t. $\nrm\cdot\nrm$. 
The latter holds true because of  $\|\psi^0_{\tria,\nu}\|_{L_2(\Omega)}\|\xi^0_{\tria,\nu}\|_{L_2(\Omega)}\eqsim \frac{|\omega_\tria(\nu)|}{d+1}$, the local supports of the $\psi^0_{\tria,\nu}$ and $\xi^0_{\tria,\nu}$, and the uniform K-mesh property of $\tria$.
We conclude that $(\identity-Q_\tria^0)(\Xi_\tria\setminus \Xi^0_\tria)$ is a basis for $\V_\tria^1$ that is uniformly stable w.r.t. $\nrm\cdot\nrm$.

Since $\Xi_\tria^0 \cup (\identity-Q_\tria^0)(\Xi_\tria\setminus \Xi^0_\tria)$ is not the basis of choice to set up the stiffness matrix, we give the implementation ${\bf G}_\tria$ of the uniform preconditioner $G_\tria$ for $\V_\tria$ being equipped with $\Xi_\tria$, partitioned into $\Xi^0_\tria=\{\xi_{\tria,\nu}\colon \nu \in N_\tria\}$ and $\Xi_\tria\setminus \Xi_\tria^0$.
It reads as
$$
{\bf G}_\tria=
\left[\begin{array}{cc}
\identity & {\bf S}_\tria\\
0 & \identity
\end{array}
\right]
\left[\begin{array}{cc}
{\bf G}_\tria^0 & 0 \\
0 & \diag\{\|h_\tria^s \xi\|_{L_2(\Omega)}^{-2}\colon \xi \in \Xi_\tria \setminus \Xi_\tria^0\}
\end{array}
\right]
\left[\begin{array}{cc}
\identity & 0\\
{\bf S}_\tria^\top & \identity
\end{array}
\right],
$$
where for $(\nu,\xi) \in N_\tria \times (\Xi_\tria \setminus \Xi_\tria^0)$,  $({\bf S}_\tria)_{\nu \xi}:=-\frac{ (d+1)\langle \xi, \psi^0_{\tria,\nu}\rangle_{L_2(\Omega)}}{|\omega_\tria(\nu)|}$,
and where in \eqref{33} we have replaced 
$\|h_\tria^s (\identity-Q_\tria^0)\xi\|_{L_2(\Omega)}^{-2}$
by the equivalent
$\|h_\tria^s \xi\|_{L_2(\Omega)}^{-2}$.

%%%%%%%%%%%%%%%%%%%%%

\section{Numerical Experiments} \label{Snumerics}
Let $\Gamma = \partial [0,1]^3 \subset \R^3$ be the two-dimensional manifold without boundary given as the boundary of the unit cube, $\W := H^{1/2}(\Gamma)$, $\V := H^{-1/2}(\Gamma)$, and $\V_\tria =\Ss_\tria^{-1,\ell}\subset \V$.
%For this domain it is possible to use an parametrization with $|\partial \new{\chi}(x)| = 1$, which conveniently gives $\langle \, , \, \rangle_{L_2(\Gamma)} = \langle \,, \, \rangle_{\tria}$ for any partition $\tria$ (cf. Section~\ref{Smodified}).

The role of the opposite order operator $B_\tria^{\Ss}  \in \cLis(\Ss^{0,1}_{\tria,0},(\Ss^{0,1}_{\tria,0})')$ from Section~\ref{SB} will be fulfilled by
$(B_\tria^{\Ss} u)(v):=(Bu)(v)$ for
an adapted hypersingular operator $B \in \cLis(\W, \W')$. The hypersingular operator $\tilde B \in \cL(\W,\W')$ itself
 is only semi-coercive, but there
are various options to change it into a coercive operator (\cite{249.15}). We consider $B \in \cLis(\W,\W')$ given by
$(Bu)(v) = (\tilde Bu)(v) + \alpha \langle u, \1 \rangle_{L_2(\Gamma)} \langle v, \1 \rangle_{L_2(\Gamma)}$ for some $\alpha > 0$.
By comparing different values numerically, we find $\alpha = 0.05$ to give good results in our examples.

\new{With $m:={2+\ell\choose \ell}-1$, as in \S\ref{discontinuoussubspacecorrection} we equip $\V_\tria$ with a usual $L_2(\Gamma)$-orthogonal basis $\{\1|_{T}\colon T \in \tria\} \cup \{\xi_{T,i}\colon T \in \tria,\,1 \leq i \leq m\}$ where $\supp \xi_{T,i} \subset \overline{T}$, $\|\xi_{T,i}\|_{L_2(\Omega)}=|T|^{\frac12}$, and denote the resulting stiffness matrix by ${\bf A}_\tria$. The lowest order case $\ell=0$ corresponds to $m=0$.

Equipping $\Ss^{0,1}_{\tria,0}$ with the nodal basis $\Phi_\tria$ defined in \eqref{20}-\eqref{21}, for $\ell=0$ the matrix representation of the preconditioner reads as
$$
\bm{G}_\tria=\bm{D}_\tria^{-1}\big(\bm{p}_\tria^\top \bm{B}_\tria^{\Ss} \bm{p}_\tria+\beta \bm{q}_\tria^\top \bm{D}^{1/2}_\tria \bm{q}_\tria\big)\bm{D}_\tria^{-1},
$$
with $\bm{D}_\tria=\diag\{|T|\colon T \in \tria\}$ and uniformly sparse $\bm{p}_\tria$ and $\bm{q}_\tria$
as given in Sect.~\ref{impl}.

Denoting above $\bm{G}_\tria$ by $\bm{G}^0_\tria$, by applying for $\ell>0$ the subspace correction method from \S\ref{discontinuoussubspacecorrection}, the matrix representation of the resulting uniform preconditioner is given by
$$
\bm{G}_\tria=\left[\begin{array}{@{}cc@{}} \bm{G}^0_\tria & 0 \\ 0 & \beta \diag\{|T|^{-3/2} \identity_{m \times m}\colon T \in \tria\} \end{array}
\right]
$$}

The (full) matrix representations of the discretized singular integral operators $\bm{A}_\tria$ and $\bm{B}_\tria^{\Ss}$ are calculated using the BETL2 software package \cite{138.29} (alternatively, one may apply low rank approximations in a hierarchical format). Condition numbers are determined using Lanczos iteration with respect to $\lnrm\cdot\rnrm:=\|\bm{A}_\tria^{\frac{1}{2}}\cdot\|$.
\new{The constant $\beta$ is approximately optimized by comparing different choices numerically.}

We will compare our preconditioner to the diagonal preconditioner $\diag(\bm{A}_\tria)^{-1}$, and in the piecewise constant case, also to the related preconditioner
$\hat{\bm{G}}_\tria$ from \cite{138.275}, where $\hat{\bm{G}}_{\tria}=\hat{\bm{D}}_\tria^{-1} \bm{E}_\tria^\top \bm{B}_{\hat \tria}^{\Ss} \bm{E}_\tria \hat{\bm{D}}_\tria^{-\top}$ is defined as follows.
With $\hat{\tria}$ being the barycentric refinement of $\tria$, a collection $\hat{\Psi}_\tria \subset \Ss_{\hat \tria,0}^{0,1}$ is constructed in \cite{35.8655}
such that the Fortin projector $\hat{P}_\tria$ with $\ran \hat{P}_\tria=\hat{\W}_\tria:=\Span \hat{\Psi}_\tria$ and $\ran (\identity-\hat{P}_\tria)=\V_\tria^{\perp_{L_2(\Gamma)}}$ exists, and, under an additional sufficiently mildly-grading condition on the partition, has a uniformly bounded norm $\|\hat{P}_\tria\|_{\cL(\W,\W)}$ (cf.~Theorem~\ref{theorem1});
 $\hat{\bm{D}}_\tria:=\langle \Xi_\tria,\hat{\Psi}_\tria\rangle_{L_2(\Gamma)}$; $\bm{E}_\tria$ is the representation of the embedding $\hat{\W}_\tria \hookrightarrow \Ss_{\hat \tria,0}^{0,1}$ equipped with $\hat{\Psi}_\tria$ and the nodal basis of $\Ss_{\hat \tria,0}^{0,1}$, respectively; and 
$B_{\hat \tria}^{\Ss}  \in \cLis(\Ss^{0,1}_{\hat \tria},(\Ss^{0,1}_{\hat \tria})')$ is an opposite order operator that we take as 
$(B_{\hat \tria}^{\Ss} u)(v):=(Bu)(v)$, with $B$ the adapted hypersingular operator.

Compared to our $\bm{G}_\tria=\bm{G}_\tria^0$, the preconditioner $\hat{\bm{G}}_{\tria}$ has the disadvantages that, besides the aforementioned mildly grading condition, 
the matrix $\hat{\bm{D}}_{\tria}$, although uniformly sparse, is not diagonal, so that the (sufficiently accurate) application of its inverse cannot be performed in linear complexity; furthermore that it requires evaluating the adapted hypersingular operator on the larger space $\Ss_{\hat \tria,0}^{0,1} \supset \Ss_{\tria,0}^{0,1}$ ($\#\hat\tria=6\#\tria$); and finally that the non-standard barycentric refinement $\hat\tria$ has to be generated.

\subsection{Uniform refinements}
Consider a conforming triangulation $\tria_1$ of $\Gamma$ consisting of $2$ triangles per side, so $12$ triangles in total.
We let $\bbT$ be the sequence $\{\tria_k\}_{k \geq 1}$ of uniform red-refinements, where $\tria_k \succ \tria_{k-1}$ is found by subdividing each triangle from $\tria_{k-1}$ into $4$ congruent subtriangles.

For $\V_\tria=\Ss^{-1,\ell}_\tria$, Tables~\ref{tbl:unif1} and \ref{tbl:unif2} show the condition numbers of the preconditioned system for $\ell=0$ and $\ell=2$, respectively.
Aside from being uniformly bounded, the condition numbers of our preconditioner $\bm{G}_{\tria}$ are of modest size. 
In the constant case, $\ell = 0$, Table~\ref{tbl:unif1} reveals
that the preconditioner $\hat{\bm{G}}_{\tria}$ from \cite{35.8655,138.275} gives better condition numbers. As described above, this quantitative gain comes at a price. In the result of $\dim \Ss_\tria^{-1,0} = 3072$, using full matrices for the discretized
adapted hypersingular operator, we found a setup and application time of $1816$s and $0.0971$s for $\hat{\bm{G}}_{\tria}$, compared to $385$s and $0.00284$s for $\bm{G}_\tria$. These differences
are due to numerical inversion of $\hat{\bm{D}}_{\tria}$ by LU factorization with partial pivoting, and the enlargement $\Ss_{\hat \tria,0}^{0,1} \supset \Ss_{\tria,0}^{0,1}$, also causing our test machine to go out of memory in calculating $\hat{\bm{G}}_{\tria}$ for the last refinement. 
Although we expect them to be in any case significant, these differences can be made smaller when the exact inversion of $\hat{\bm{D}}_{\tria}$ is avoided, and $\bm{B}^{\Ss}_{\hat \tria}$ and $\bm{B}^{\Ss}_\tria$
are replaced by suitable low rank approximations.

 \begin{table}[]
\centering
\caption{Spectral condition numbers of the preconditioned single layer system, using uniform refinements, discretized by piecewise constants $\Ss_\tria^{-1,0}$. Both matrices $\bm{G}_\tria$ and $\hat{\bm{G}}_{\tria}$ are constructed using the adapted hypersingular operator with $\alpha=0.05$; and $\new{\beta} = 1.25$ in $\bm{G}_\tria$.}
\begin{tabular}{@{}rcccc@{}}
\toprule
dofs    & $\kappa_S(\diag(\bm{A}_\tria)^{-1}\bm{A}_\tria)$ & $\kappa_S(\bm{G}_\tria \bm{A}_\tria)$ & $\kappa_S(\hat{\bm{G}}_{\tria} \bm{A}_\tria)$\\ \midrule
$12   $ &  $14.56  $                        & $2.51$ & $1.29$  \\
$48   $ &  $29.30  $                        & $2.52$ & $1.58$  \\
$192  $ &  $58.25  $                        & $2.66$ & $1.77$  \\
$768  $ &  $116.3 $                        & $2.71$ & $1.89$  \\
$3072 $ &  $230.0 $                        & $2.74$ & $1.94$  \\
$12288$ &  $444.8 $                        & $2.79$ &   \\ \bottomrule 
\end{tabular}
\label{tbl:unif1}
\end{table}

\begin{table}[]
\centering
\caption{Spectral condition numbers of the preconditioned single layer system, using uniform refinements, discretized by discontinuous piecewise quadratics $\Ss_\tria^{-1,2}$. The matrix $\bm{G}_\tria$ is constructed using the adapted hypersingular operator, with  $\alpha=0.05$, and $\new{\beta}= 1.25$.}
\begin{tabular}{@{}rcc@{}}
\toprule
dofs    & $\kappa_S(\diag(\bm{A}_\tria)^{-1}\bm{A}_\tria)$&$\kappa_S(\bm{G}_\tria\bm{A}_\tria)$\\ \midrule
$72   $ & $167.16$                             & $9.58 $                        \\
$288  $ & $309.12$                             & $10.4$                        \\
$1152 $ & $616.03$                             & $11.1$                        \\
$4608 $ & $1211.3$                             & $11.3$                        \\
$18432$ & $2337.2$                             & $11.4$                        \\ \bottomrule
\end{tabular}
\label{tbl:unif2}
\end{table}

\subsection{Local refinements}
Here we take $\bbT$ to be the sequence $\{\tria_k\}_{k \geq 1}$ of locally refined triangulations, where $\tria_k \succ \tria_{k-1}$ is constructed using conforming newest vertex bisection to refine all triangles in $\tria_{k-1}$ that touch a corner of the cube.

As noted before, the preconditioner $\hat{\bm{G}}_{\tria}$ provides uniformly bounded condition numbers if the family $\bbT$ satisfies 
some sufficiently mildly-grading condition on the partition \cite{249.18, 138.275}. 
It is not directly clear whether $\bbT$ satisfies this condition, but we included the results nonetheless.

Table~\ref{tbl:local} gives the results for the preconditioned single layer operator discretized by piecewise constants $\Ss_\tria^{-1,0}$. 
The condition numbers $\kappa_S(\bm{G}_\tria \bm{A}_\tria)$ are nicely bounded under local refinements. In this case our preconditioner gives condition numbers slightly smaller
than the ones found with $\hat{\bm{G}}_{\tria}$. The calculation of the LU decomposition with partial pivoting of $\hat{\bm{D}}_{\tria}$ turns out to break down in the last result ($\dim \Ss_\tria^{-1,0} = 2976$).

\begin{table}[]
\centering
\caption{Spectral condition numbers of the preconditioned single layer system discretized by piecewise constants $\Ss_\tria^{-1,0}$ using local refinements at each of the eight cube corners. Both matrices $\bm{G}_\tria$ and $\hat{\bm{G}}_{\tria}$ are constructed
using the adapted hypersingular operator with $\alpha=0.05$; and $\new{\beta} = 1.2$ in $\bm{G}_\tria$. The second column is defined by $h_{\tria, min} := \min_{T \in \tria} h_T$.}
\begin{tabular}{@{}rcccc@{}}
\toprule
dofs     &  $h_{\tria,min}$   &$\kappa_S(\diag(\bm{A}_\tria)^{-1}\bm{A}_\tria) $&$\kappa_S(\bm{G}_\tria\bm{A}_\tria)$ & $\kappa_S(\hat{\bm{G}}_{\tria} \bm{A}_\tria)$\\ \midrule
$12  $   &  $7.0\cdot 10^{-1}$ & $14.56$ & $2.61$ & $1.29$  \\
$432 $   &  $2.2\cdot 10^{-2}$ & $68.66$ & $2.64$ & $2.91$  \\
$912 $   &  $6.9\cdot 10^{-4}$ & $73.15$ & $2.64$ & $3.14$  \\
%$1392$   &  $2.1\cdot 10^{-5}$ & $73.19$ & $2.64$ & $3.21$  \\
$1872$   &  $6.7\cdot 10^{-7}$ & $73.70$ & $2.64$ & $3.25$  \\
$2352$   &  $2.1\cdot 10^{-8}$ & $73.80$ & $2.64$ & $3.26$  \\
$2976$   &  $2.3\cdot 10^{-10}$ & $73.66$ & $2.64$   \\ \bottomrule 
\end{tabular}
\label{tbl:local}
\end{table}

\section{Conclusion}
In this paper, we have seen how a \new{uniformly} boundedly invertible operator $B_\tria^{\Ss}$ from the space of continuous piecewise linears w.r.t.~any conforming shape regular partition $\tria$, equipped with the norm of $H^s(\Omega)$ (or $H^s(\Gamma)$) for some $s \in [0,1]$, to its dual can be used to \new{uniformly} precondition a boundedly invertible operator of opposite order discretized by discontinuous or continuous polynomials of any fixed degree w.r.t.~$\tria$. 
The cost of the resulting preconditioner is the sum of
a cost that scales linearly in $\#\tria$ and the cost 
of the application of $B_\tria^{\Ss}$.
For $\tria$ being member of a nested sequence of quasi-uniform partitions, $B_\tria^{\Ss}$ has been constructed so that it requires linear cost. \new{In a forthcoming work, we will realize this also for locally refined partitions.}
%\bibliographystyle{halpha} %geen idee waarom de arXiv link geen hyperref wordt
%\bibliography{../ref}

\begin{thebibliography}{HJHUT18}

\bibitem[BC07]{35.8655}
A.~Buffa and S.H. Christiansen.
\newblock A dual finite element complex on the barycentric refinement.
\newblock {\em Math. Comp.}, 76(260):1743--1769, 2007.

\bibitem[BPV00]{34.8}
J.~H. Bramble, J.~E. Pasciak, and P.~S. Vassilevski.
\newblock Computational scales of {S}obolev norms with application to
  preconditioning.
\newblock {\em Math. Comp.}, 69(230):463--480, 2000.

\bibitem[Bra01]{31}
D.~Braess.
\newblock {\em Finite Elements}.
\newblock Cambridge University Press, 2001.
\newblock Second edition.

\bibitem[DFG{\etalchar{+}}04]{56.5}
W.~Dahmen, B.~Faermann, I.G. Graham, W.~Hackbusch, and S.A. Sauter.
\newblock Inverse inequalities on non-quasiuniform meshes and application to
  the mortar element method.
\newblock {\em Math. Comp.}, 73:1107--1138, 2004.

\bibitem[FH18]{75.067}
T.~F{\uumlaut}hrer and N.~Heuer.
\newblock Optimal quasi-diagonal preconditioners for pseudodifferential
  operators of order minus two.
\newblock Technical report, 2018.
\newblock arXiv:1806.07933v1.

\bibitem[For77]{75.22}
M.~Fortin.
\newblock An analysis of the convergence of mixed finite element methods.
\newblock {\em RAIRO Anal. Num\'er.}, 11(4):341--354, iii, 1977.

\bibitem[GHS05]{75.64}
I.~G. Graham, W.~Hackbusch, and S.~A. Sauter.
\newblock Finite elements on degenerate meshes: inverse-type inequalities and
  applications.
\newblock {\em IMA J. Numer. Anal.}, 25(2):379--407, 2005.

\bibitem[Hip06]{138.26}
R.~Hiptmair.
\newblock Operator preconditioning.
\newblock {\em Comput. Math. Appl.}, 52(5):699--706, 2006.

\bibitem[HJHUT18]{138.28}
R.~Hiptmair, C.~Jerez-Hanckes, and C.~Urz\'{u}a-Torres.
\newblock Closed-form inverses of the weakly singular and hypersingular
  operators on disks.
\newblock {\em Integral Equations Operator Theory}, 90(1):Art. 4, 14, 2018.

\bibitem[HK12]{138.29}
R.~Hiptmair and L.~Kielhorn.
\newblock Betl -- a generic boundary element template library.
\newblock Technical Report 2012-36, Seminar for Applied Mathematics, ETH
  Z{\"u}rich, Switzerland, 2012.

\bibitem[HUT16]{138.275}
R.~Hiptmair and C.~Urz\'ua-Torres.
\newblock Dual mesh operator preconditioning on 3d screens: Low-order boundary
  element discretization.
\newblock Technical Report 2016-14, Seminar for Applied Mathematics, ETH
  Z{\"u}rich, Switzerland, 2016.

\bibitem[Kat60]{169.5}
T.~Kato.
\newblock Estimation of iterated matrices, with application to the von
  {N}eumann condition.
\newblock {\em Numer. Math.}, 2:22--29, 1960.

\bibitem[Osw94]{242.5}
P.~Oswald.
\newblock {\em Multilevel finite element approximation: {T}heory and
  applications}.
\newblock B.G. Teubner, Stuttgart, 1994.

\bibitem[Ste02]{249.16}
O.~Steinbach.
\newblock On a generalized {$L_2$} projection and some related stability
  estimates in {S}obolev spaces.
\newblock {\em Numer. Math.}, 90(4):775--786, 2002.

\bibitem[Ste03]{249.18}
O.~Steinbach.
\newblock {\em Stability estimates for hybrid coupled domain decomposition
  methods}, volume 1809 of {\em Lecture Notes in Mathematics}.
\newblock Springer-Verlag, Berlin, 2003.

\bibitem[SvV19]{249.98}
R.P. Stevenson and R.~van Veneti\"{e}.
\newblock Uniform preconditioners of linear complexity for problems of negative
  order discretized on locally refined meshes, 2019.
\newblock In preparation.

\bibitem[SW98]{249.15}
O.~Steinbach and W.~L. Wendland.
\newblock The construction of some efficient preconditioners in the boundary
  element method.
\newblock {\em Adv. Comput. Math.}, 9(1-2):191--216, 1998.
\newblock Numerical treatment of boundary integral equations.

\bibitem[SZ90]{247.2}
L.~R. Scott and S.~Zhang.
\newblock Finite element interpolation of nonsmooth functions satisfying
  boundary conditions.
\newblock {\em Math. Comp.}, 54(190):483--493, 1990.

\bibitem[VS18]{308.5}
F.~J. Vermolen and A.~Segal.
\newblock On an integration rule for products of barycentric coordinates over
  simplexes in {$\Bbb R^n$}.
\newblock {\em J. Comput. Appl. Math.}, 330:289--294, 2018.

\bibitem[XZ03]{315.7}
J.~Xu and L.~Zikatanov.
\newblock Some observations on {B}abu\v ska and {B}rezzi theories.
\newblock {\em Numer. Math.}, 94(1):195--202, 2003.

\end{thebibliography}
\newcommand{\etalchar}[1]{$^{#1}$}

\end{document}